\crefname{equation}{}{} %remove ``Equation''
\colorlet{refkey}{orange!20}
\colorlet{labelkey}{blue!30}
\numberwithin{equation}{section}
\newtheorem{theorem}{Theorem}[section]
\newtheorem{proposition}[theorem]{Proposition}
\newtheorem{lemma}[theorem]{Lemma}
\crefname{claim}{Claim}{Claims}
\newtheorem*{question*}{Question}
\theoremstyle{definition}
\newtheorem*{definition*}{Definition}
\theoremstyle{remark}
\newtheorem*{remark}{Remark}
\newcommand{\norm}[1]{\left\lVert#1\right\rVert}
\newcommand{\snorm}[1]{\lVert#1\rVert}
\newcommand{\sang}[1]{\langle #1 \rangle}
\newcommand{\mb}{\mathbb}
\newcommand{\mbm}{\mathbbm}
\newcommand{\mc}{\mathcal}
\newcommand{\mr}{\mathrm}
\newcommand{\on}{\operatorname}
\title{On the real Davies' conjecture}
\author[Jain]{Vishesh Jain}
\author[Sah]{Ashwin Sah}
\author[Sawhney]{Mehtaab Sawhney}
\email{vishesh.vj@gmail.com, \{asah,msawhney\}@mit.edu}
\address{Department of Mathematics, Massachusetts Institute of Technology, Cambridge, MA 02139, USA}
\begin{document}
\begin{abstract}
We show that every matrix $A \in \mb{R}^{n\times n}$ is at least $\delta$$\|A\|$-close to a \emph{real} matrix $A+E \in \mb{R}^{n\times n}$ whose eigenvectors have condition number at most $\tilde{O}_{n}(\delta^{-1})$. In fact, we prove that, with high probability, taking $E$ to be a sufficiently small multiple of an i.i.d. \emph{real} sub-Gaussian matrix of bounded density suffices. This essentially confirms a speculation of Davies, and of Banks, Kulkarni, Mukherjee, and Srivastava, who recently proved such a result for i.i.d. \emph{complex Gaussian} matrices. 

Along the way, we also prove non-asymptotic estimates on the minimum possible distance between any two eigenvalues of a random matrix whose entries have arbitrary means; this part of our paper may be of independent interest. 
\end{abstract}

\maketitle

\section{Introduction}\label{sec:introduction}
Recall that a matrix $A \in \mb{C}^{n\times n}$ is said to be \emph{diagonalizable} if there exists a diagonal matrix $D \in \mb{C}^{n\times n}$ and an invertible matrix $W \in \mb{C}^{n\times n}$ such that $A = WDW^{-1}$. Recall also that $A$ is said to be \emph{normal} if $AA^{\dagger} = A^{\dagger}A$, where $A^{\dagger}$ denotes the Hermitian adjoint of $A$. It is not hard to see that $A$ is normal if and only if it is unitarily diagonalizable i.e. if and only if $A = WDW^{-1}$ for some $W$ satisfying $W^{\dagger} = W^{-1}$. Since $W^{\dagger} = W^{-1}$ implies that $\|W\|\|W^{-1}\| = 1$ (here, $\|\cdot \|$ denotes the standard $\ell^{2} \to \ell^{2}$ operator norm of a matrix), a natural way of quantifying `how far' a matrix is from being normal (equivalently, `how far' it is from being unitarily diagonalizable) is via the \emph{eigenvector condition number} of $A$:
$$\kappa_V(A):= \inf_{W: A = WDW^{-1}}\|W\|\|W^{-1}\|.$$

\subsection{Davies' conjecture: }Any analytic function $f(A)$ of a diagonalizable matrix $A = WDW^{-1}$ may be written as $f(A) = W f(D) W^{-1}$. Note that since $D$ is diagonal, $f(D)$ is the diagonal matrix with non-trivial entries given by $f(D)_{ii} = f(D_{ii})$. However, computationally, this may not be an appropriate way of evaluating $f(A)$ when $A$ is `highly non-normal' i.e. it has a large eigenvector condition number; the reason for this is that even if all computations are carried to precision $\epsilon$, the overall result may be off by an error of order $\kappa_{V}(A)\epsilon$. In \cite{Dav07}, Davies suggested a way around this obstacle, namely, find a small perturbation $A+E$ of $A$ such that $\kappa_{V}(A+E)$ is small, and compute $f(A+E)$ as a substitute for $f(A)$. In order to quantify the error associated to such a scheme, he introduced a quantity called the \emph{accuracy of approximate diagonalization} defined (for matrices $A\in \mb{C}^{n\times n}$ such that $\|A\|\leq 1$) by
$$\underline{\sigma}(A,\varepsilon) := \inf_{E \in \mb{C}^{n\times n}}(\kappa_{V}(A+E)\varepsilon + \|E\|).$$
Davies conjectured that for every positive integer $n$, there exists $c_n$ such that for all $A\in \mb{C}^{n\times n}$ with $\|A\|\leq 1$ and $\varepsilon > 0$, 
\begin{equation}
\label{eqn:davies-conjecture}
\underline{\sigma}(A,\varepsilon) \leq c_n \sqrt{\varepsilon}.
\end{equation}
While Davies was able to confirm his conjecture for a number of special choices of the matrix $A$, for general $A \in \mb{C}^{n\times n}$, he was only able to obtain the following much weaker bound:
\begin{equation}
    \label{eqn:davies-bound}
    \underline{\sigma}(A,\varepsilon) \leq (1+n)\varepsilon^{2/(n+1)}. 
\end{equation}

Recently, Davies' conjecture was resolved by Banks, Kulkarni, Mukherjee, and Srivastava \cite{BKMS19} in a stronger form -- they showed that for every $A \in \mb{C}^{n\times n}$ and every $\delta \in (0,1)$, there exists $E \in \mb{C}^{n\times n}$ such that $\|E\|\leq \delta \|A\|$ and
\begin{equation}
    \label{eqn:bkms-bound}
    \kappa_{V}(A+E)\leq 4n^{3/2}(1+\delta^{-1}). 
\end{equation}
\cref{eqn:davies-conjecture} follows easily from this, for instance, by taking $\delta = 2n^{3/4}\sqrt{\varepsilon}$. In fact, \cite{BKMS19} showed that choosing $E$ from the distribution $G_{n}(\mc{N}_{\mb{C}}(0,1))$, by which we mean an $n\times n$ random matrix, each of whose entries is an independent copy of a complex Gaussian variable with mean $0$ and variance $1/n$, succeeds with high probability.\\ 

Banks et al. asked whether similar results continue to hold if one replaces the complex Gaussian perturbations with a different class of random perturbations, noting that their proof relied on special properties of the complex Gaussian distribution. As a warm-up to our main result, we show that one may indeed replace the standard complex distribution with mean $0$ and variance $1/n$ by $n^{-1/2}\xi$ for any sub-Gaussian complex random variable $\xi$ with bounded density. This is the content of \cref{thm:complex} in \cref{sec:general-complex}.

\subsection{The real Davies' conjecture: }Much more challenging is the question of whether one can replace the complex Gaussian distribution by a real distribution, for instance, the real Gaussian distribution with entries of mean $0$ and variance $1/n$. This is perhaps most interesting when the matrix $A$ is itself a real matrix. While experimental evidence by Davies suggested that this should be possible, Banks et al. adopted a more cautious position, writing that \emph{a proof (or disproof) remains to be found}. 

The key technical challenge in working with real perturbations is that the eigenvalues/eigenvectors of a real matrix are still, in general, complex. A crucial step in \cite{BKMS19} uses the fact that the probability of a standard complex Gaussian lying in any ball of radius $\varepsilon > 0$ is $O(\varepsilon^{2})$. The exponent $2$ in this bound, being the same as the real dimension of the complex plane, allows their argument to go through. However, as noted in \cite[Remark 3.4]{BKMS19}, the analogous argument for real Gaussians completely fails, since the probability of a standard real Gaussian lying in a ball of radius $\varepsilon$ centered around the origin in the complex plane is $\Theta(\varepsilon)$. We discuss this in more detail in \cref{sub:difficulties}.\\ 

Nevertheless, as our main result, we show that real perturbations suffice to regularize the eigenvector condition number (at least when the initial matrix $A$ is real). 

\begin{theorem}\label{thm:main}
There is an absolute constant $C > 0$ such that the following holds. Suppose $\delta\in(0,1/2)$ and $A\in\mb{R}^{n\times n}$. Then there is a matrix $E\in\mb{R}^{n\times n}$ with $\snorm{E}\le\delta\snorm{A}$ such that
\[\kappa_V(A+E)\le Cn^{2}\delta^{-1}\sqrt{\log(n\delta^{-1})}.\]
\end{theorem}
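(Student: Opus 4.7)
The natural candidate, following BKMS, is to take $E = \gamma G$ with $G$ an iid real sub-Gaussian matrix of bounded density, normalized so that $\|G\| = O(1)$ with high probability, and the scale $\gamma \asymp \delta\snorm{A}$ chosen so that $\snorm{E} \le \delta\snorm{A}$ holds with high probability. Setting $M := A + \gamma G$, the task reduces to bounding $\kappa_V(M)$ with high probability.

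The starting point is the following general bound. Fix a diagonalization $M = V D V^{-1}$ with unit-norm columns $u_i$ of $V$. The $i$-th row of $V^{-1}$ equals $v_i^{*}/(v_i^{*}u_i)$ for the unit-norm left eigenvector $v_i$, so its norm is the eigenvalue condition number $\kappa(\lambda_i) := 1/|v_i^{*} u_i|$. Hence
\[
\kappa_V(M) \le \|V\|_F\,\|V^{-1}\|_F = \sqrt{n}\,\Big(\sum_{i=1}^n \kappa(\lambda_i)^2\Big)^{\!1/2} \le n\,\max_i \kappa(\lambda_i).
\]
This reduces the problem to bounding each $\kappa(\lambda_i)$, which in turn is typically controlled by combining (i) a minimum eigenvalue-gap bound $\min_{i\ne j}|\lambda_i(M)-\lambda_j(M)| \ge \eta$, with (ii) an identity expressing $|v_i^{*}u_i|$ via a deflated Schur block, so that $\kappa(\lambda_i)$ may be estimated in terms of $\eta$ and an extremal singular value of a perturbed resolvent.

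The principal challenge, and the focus of the independent-interest estimate advertised in the abstract, is the gap bound (i). In the complex Gaussian case, the two-dimensional anti-concentration $\Pr(|\lambda_i - \lambda_j|\le\varepsilon) = O((\varepsilon/\gamma)^2)$ readily union-bounds over the $\binom{n}{2}$ pairs. Over $\mb{R}$ this fails in exactly one regime: two eigenvalues may collide along the real axis---either two real eigenvalues approaching each other, or a complex conjugate pair pinching down to the real axis---and the relevant anti-concentration is genuinely one-dimensional, of order $\varepsilon/\gamma$. My plan is to split the pairs into (a) \emph{generic} pairs where both eigenvalues stay bounded away from the real axis, handled by a complex-type argument after a suitable change of variables, and (b) near-real pairs, handled by a one-variable Littlewood--Offord / smoothing argument exploiting the bounded density of $G$'s entries along any fixed real direction. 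The loss of a full dimension in anti-concentration is precisely the source of the additional $\sqrt{n}$ factor in \cref{thm:main} relative to \cref{eqn:bkms-bound}.

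For step (ii), I would rewrite $M$ in a partial Schur form that peels off $\lambda_i$ and then \emph{re-randomize} a suitably chosen row/column of $G$ that has not been conditioned upon, to show that $|v_i^{*} u_i|$ cannot be too small; this is again driven by anti-concentration for a real Gaussian linear combination, but now against a single fixed direction rather than a two-eigenvalue event, so the existing real-variable tools suffice. Combining the high-probability bounds on $\eta$ from step (i) and on $\max_i \kappa(\lambda_i)$ from step (ii), and substituting into the displayed inequality above, yields the claimed bound. The main obstacle throughout is the near-real collision analysis in step (i); once that is handled, the remaining arguments broadly parallel the BKMS template with the necessary real-variable adjustments.
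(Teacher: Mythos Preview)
Your proposal has a genuine gap in step (ii). There is no usable bound on $\kappa(\lambda_i)$ in terms of only the eigenvalue gap $\eta$ together with ``an extremal singular value of a perturbed resolvent''. If one writes $M$ in Schur form with $\lambda_i$ in the top-left corner, $M = Q\bigl(\begin{smallmatrix}\lambda_i & b^{*}\\ 0 & T\end{smallmatrix}\bigr)Q^{*}$, then $\kappa(\lambda_i)^2 = 1 + \snorm{b^{*}(T-\lambda_i I)^{-1}}_2^2$, and controlling $\snorm{(T-\lambda_i I)^{-1}}$ is essentially the same problem as controlling the eigenvector condition number of the deflated block $T$: the quantity $\sigma_{\min}(T-\lambda_i I)$ can be smaller than $\min_{j\ne i}|\lambda_j-\lambda_i|$ by an arbitrary non-normality factor, so knowing $\eta(M)$ alone buys you nothing here. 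The ``re-randomization'' idea does not rescue this either: once you condition on $\lambda_i$ being an eigenvalue of $M$, the randomness in $G$ is coupled to the eigenstructure, and small-ball estimates for $\sigma_n(M-zI)$ or $\sigma_{n-1}(M-zI)$---which require $z$ to be deterministic---do not apply at the random point $z=\lambda_i$ (where of course $\sigma_n=0$). As stated, step (ii) is circular.

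The paper takes a different route and never bounds individual $\kappa(\lambda_i)$. It controls $\kappa_2(M)^2 = \sum_i \kappa(\lambda_i)^2$ through the pseudospectral volume: \cref{lem:vol-bound} gives $\kappa_2^2 \lesssim \varepsilon^{-2}\on{vol}\bigl(\Lambda_\varepsilon(M)\cap\mc{D}(0,2\snorm{M})\bigr)$ provided $0<\varepsilon\le\eta(M)/(2n\kappa_2(M))$, and the expected volume is then bounded by integrating $\mb{P}[\sigma_n(M-zI)\le\varepsilon]$ over $z$, using the $O(\varepsilon^2/|\on{Im}z|)$ estimate of \cref{prop:complex-sv} away from the real axis and the $O(\varepsilon)$ estimate of \cref{prop:unif-bound} on a thin strip near it. This yields $\mb{E}[\kappa_2^2\mid\mc{E}_t]\lesssim_\xi n^3\delta^{-2}\log(tn/\delta)$, where $\mc{E}_t$ includes the event $\{\kappa_2\le t\}$. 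The apparent circularity---the admissible $\varepsilon$ depends on the very $\kappa_2$ being bounded---is broken by a bootstrapping iteration $t\mapsto n^3\delta^{-2}\log(tn/\delta)$, which contracts to a fixed point of order $n^3\delta^{-2}\log(n/\delta)$ while losing only a constant factor in probability at each step. Your eigenvalue-gap bound (step (i)) does enter the argument, but only to pin down the scale $\varepsilon$ in \cref{lem:vol-bound}; it is not the mechanism that controls $\kappa_V$. Incidentally, what the one-dimensional anti-concentration actually costs relative to the complex case \cref{thm:complex} is the $\sqrt{\log(n\delta^{-1})}$ factor, not an extra $\sqrt{n}$.
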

\begin{remark}
As before, this implies Davies' conjecture \cref{eqn:davies-conjecture}, \emph{up to an overall factor of} $(\log(1/\varepsilon))^{1/4}.$
\end{remark}
\begin{remark}
As in \cite{BKMS19}, our proof actually shows that choosing $E$ to be a suitable rescaling of a real random matrix, each of whose entries is an independent copy of a sub-Gaussian real random variale $\xi$ of bounded density, succeeds with high probability. For the precise statement, see \cref{thm:real}.   
\end{remark}

We note that prior to our work, \emph{no result} on the regularization of the eigenvector condition number of general matrices by a \emph{real} translation seems to be known. Indeed, even the much weaker bound \cref{eqn:davies-bound} relies on a theorem of Friedland \cite{Fri01}, which crucially requires working with general complex matrices. 
\subsection{Non-asymptotic bounds on eigenvalue gaps: }One of the steps in our proof of \cref{thm:main} utilizes and extends quite recent tools in the non-asymptotic theory of random matrices, most notably those involved in Ge's \cite{Ge17} resolution of the following seemingly innocuous (but long-standing) problem: show that, with high probability, all the eigenvalues of a random matrix, each of whose entries is independently $\pm 1$ with equal probability, are distinct. While the (easier) Hermitian analog of this had been known (see \cite{tao2017random} and follow-up work), a key challenge in resolving the non-Hermitian case is the same phenomenon we encounter -- non-Hermitian matrices, in general, have complex eigenvalues, whereas the probability for a real random variable to lie in a disc of radius $\varepsilon$ in the complex plane can very well be $\Omega(\varepsilon)$ (as opposed to the `desired' bound of $O(\varepsilon^{2})$).

Building on ideas introduced by Rudelson and Vershynin in \cite{rudelson2016no}, Ge found an ingenious way to circumvent this issue, and the ideas in \cite{Ge17} will also play an important role for us (however, note that by themselves, these ideas unfortunately still do not suffice to derive the main eigenvector condition number regularization result, see the discussion in \cref{sub:difficulties}). Indeed, a part of our argument involves the derivation of non-asymptotic bounds on the size of the eigenvalue gap (the minimum distance between any two eigenvalues) of a random matrix, much like the main result in \cite{Ge17}. However, unlike in \cite{Ge17}, where the entries of the random matrix under consideration are centered, the entries of our random matrices can be \emph{arbitrarily uncentered}. This presents an obstacle in the strategy of \cite{Ge17}, which relies on control of the largest singular value (i.e. operator norm) of the matrix (even when the entries are continuous with bounded density, as will be the case here). 

However, we show that by exploiting control on the \emph{smallest} singular value of the matrix (which, crucially, is unaffected by the mean profile), we can derive a bound on the eigenvalue gap of arbitrarily uncentered random matrices. Whereas the eigenvalues of uncentered, non-normal random matrices have attracted much attention (see, for instance, \cite{guionnet2014convergence, noy2015regularization}, and the references therein), to our knowledge, our result is the first to obtain \emph{any} non-asymptotic bound on the eigenvalue gap for unrestricted mean profiles. This result may be of independent interest.  
\begin{theorem}[Informal, see \cref{prop:spacing} for a precise version]
\label{thm:eigenvalue-gaps}
Let $\xi$ be a sub-Gaussian real random variable with bounded density, and let $G$ denote an $n\times n$ random matrix, each of whose entries is an independent copy of $\xi$. Then, for any $A \in \mb{R}^{n\times n}$ \emph{(}such that $\|A\| \geq 1$\emph{)}, and $s\leq 1$,
$$\mb{P}[\min_{i\neq j}|\lambda_i (A+G) - \lambda _j (A+G)| \leq s] = \tilde{O}\left(s\cdot \|A\|^{4}n^{11/2} + c^{n}\right), $$
where $\lambda_{1}(A+G),\dots, \lambda_{n}(A+G)$ denote the eigenvalues of $A+G$, and $c\in (0,1)$ is a constant depending only on $\xi$.  
\end{theorem}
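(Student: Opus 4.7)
The plan is to follow the broad architecture of Ge \cite{Ge17}, which resolved the centered version of this problem, with a key substitution that removes sensitivity to the mean profile $A$. The starting reduction, essentially as in Ge and Rudelson--Vershynin, shows that on an event of overwhelming probability, a small eigenvalue gap $|\lambda_i(A+G)-\lambda_j(A+G)|\le s$ implies that both $\sigma_n(M)$ and $\sigma_{n-1}(M)$ are small at $z^{*}:=(\lambda_i+\lambda_j)/2$, where $M:=A+G-z^{*}I$: $\sigma_n(M)\le s/2$ follows directly from the eigenvector $v_i$, while $\sigma_{n-1}(M)\le s/\sin\angle(v_i,v_j)$ holds after projecting $v_j$ off $v_i$, with the case of nearly-parallel eigenvectors treated separately. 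Since the spectrum of $A+G$ lies in a disk of radius $\snorm{A}+O(\sqrt n)$ with high probability, $z^{*}$ ranges over a disk of area $O(\snorm{A}^2)$.

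I would then cover this disk by an $\eta$-net $\mc{N}$ of size $O(\snorm{A}^2/\eta^2)$ and union bound over $z\in\mc{N}$, reducing to a fixed-$z$ estimate of the shape
\[\mb{P}\big[\sigma_n(A+G-zI)\le\varepsilon_1 \text{ and } \sigma_{n-1}(A+G-zI)\le\varepsilon_2\big]\le \varepsilon_1\varepsilon_2\cdot\on{poly}(n,\snorm{A})+c^n.\]
The factor bounding $\sigma_n$ is supplied by Rudelson--Vershynin type invertibility estimates, which depend only on the random part and are entirely insensitive to the deterministic shift $A-zI$, while the $c^n$ tail absorbs compressible near-null vectors. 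For the $\sigma_{n-1}$ factor, following Ge, I would condition on a right near-null vector, rewrite $\sigma_{n-1}(M)\le\varepsilon_2$ as a distance-to-subspace condition on an appropriate row of $M$, and reduce this to anticoncentration for a quadratic form in an independent row of $G$.

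The main obstacle is that Ge's treatment of this quadratic form invokes a bound depending on the operator norm of the random matrix, which grows uncontrollably with $\snorm{A}$ in the uncentered setting. The key new ingredient to implement is to replace this operator-norm step by a bound on the \emph{smallest} singular value of a suitable $(n-1)\times(n-1)$ submatrix of $M$: whereas the operator norm of $A+G-zI$ can scale with $\snorm{A}$, Rudelson--Vershynin / Tikhomirov style lower bounds on the smallest singular value hold uniformly over all deterministic mean shifts, and so are unaffected by $A$. Inserting this substitution into the Ge framework, carefully tracking polynomial factors, and then optimizing $\eta,\varepsilon_1,\varepsilon_2$ against $s$ together with the net count $O(\snorm{A}^2/\eta^2)$, should yield the claimed $s\cdot\snorm{A}^{4} n^{11/2}$ dependence (with logarithmic factors absorbed into $\tilde O$), plus the $c^n$ contribution inherited from the compressible-vector case.
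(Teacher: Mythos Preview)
There is a genuine gap: your fixed-$z$ target $\mb{P}[\sigma_n\le\varepsilon_1,\,\sigma_{n-1}\le\varepsilon_2]\le\varepsilon_1\varepsilon_2\cdot\on{poly}(n,\snorm{A})$ is too weak to yield any decay in $s$. Via log-majorization one has $\sigma_n\sigma_{n-1}\le s^2$, so across every dyadic scale the product $\varepsilon_1\varepsilon_2$ is pinned at $\Theta(s^2)$, while an $s$-mesh net of the disk has $\Theta(\snorm{A}^2/s^2)$ points; the union bound then gives $\tilde{O}(\snorm{A}^2)\cdot\on{poly}$, constant in $s$. No optimization over $\eta,\varepsilon_1,\varepsilon_2$ repairs this --- you are short exactly one power of $\varepsilon$, and that extra power cannot come from a bound that is uniform in $z$.

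What your outline omits entirely is the role of $|\on{Im}z|$, which is the heart of both Ge's argument and the paper's. For a \emph{real} matrix shifted by a genuinely complex $z$, any approximate null vector $v=x+iy$ must have both $\snorm{x}_2,\snorm{y}_2\gtrsim|\on{Im}z|/\snorm{M}$ (this is \cref{lem:two-dimensionality}); hence $\sang{M_j,v_j}$ is a genuinely two-dimensional projection of the real random column $(G)_j$, and bounded-density anticoncentration gives rate $\varepsilon^2/|\on{Im}z|$ per singular value, i.e.\ $\varepsilon_1^2\varepsilon_2^2/|\on{Im}z|^2$ for the joint event (\cref{lem:complex-sv-2}). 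The proof of \cref{prop:spacing} then stratifies the disk into horizontal strips: on a strip of height $H$ about the real axis one falls back on your $\varepsilon_1\varepsilon_2$ bound (\cref{lem:unif-bound-2}), and elsewhere one uses the $|\on{Im}z|$-improved bound; balancing with $H\sim s$ is precisely what produces the linear rate. Two smaller points: your reduction via $\sin\angle(v_i,v_j)$ leaves the nearly-parallel case unresolved, whereas log-majorization (\cref{lem:orthog}) gives the tradeoff $\sigma_n\le s^2/t$, $\sigma_{n-1}\le 2t$ without touching eigenvectors; and for bounded-density $\xi$ there is no compressible/incompressible split --- the $c^n$ in the informal statement comes only from the complement of the operator-norm event $\{\snorm{G_n}\le K'\}$.
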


We remark that in the case of sub-Gaussian complex random variables with bounded density, we obtain a better bound. In particular, for all mean profiles $A \in \mb{C}^{n\times n}$ with norm larger than a small polynomial, our bound improves on a recent result of Banks et al. \cite{BVKS19} (which was proved only for complex Gaussians). See \cref{thm:spacing-complex-general} in \cref{app:complex-spacing} for details. 

\subsection{Organization: }The rest of this paper is organized as follows. In \cref{sec:preliminaries}, we introduce some prelimary notions; in \cref{sec:general-complex}, we present a generalization of the main result in \cite{BKMS19} to general complex random variables; in \cref{sub:difficulties}, we present a more detailed discussion of the difficulties encountered in handling real perturbations, as well as a brief overview of our proof; \cref{sec:strategy} presents the formal statements of the three key estimates required for our proof, and \cref{sec:deduction} shows how to deduce \cref{thm:main} from these estimates, using a bootstrapping argument. Finally, \cref{sec:complex-bound} and \cref{app:spacing} contain the proofs of these three key estimates. We also include three appendices: \cref{app:two-singular-values} contains a standard reduction of the problem of bounding the smallest two singular values of a random matrix, \cref{app:complex-spacing} discusses our improved bounds on the eigenvalue gaps of complex random matrices, and \cref{app:details} reproduces some details from \cite{BKMS19}, needed to prove \cref{thm:main}, for the reader's convenience. 

\subsection{Acknowledgements: }V.J. would like to thank Archit Kulkarni and Nikhil Srivastava for introducing him to the problem.   

\subsection{Concurrent and independent work: }Right before uploading our manuscript to the arXiv, we learned of concurrent and independent work of Banks, Garza-Vargas, Kulkarni, and Srivastava \cite{BVKS20} with similar main results as ours. Both of our works make use of techniques in \cite{BKMS19,Ge17}. However, beyond this similarity, the works are substantially different -- in particular, \cite{BVKS20} works with the limiting expression in \cref{lem:vol-limit} (this requires significant additional ideas, such as developing a version of the restricted invertibility property), whereas our work completely avoids this via a novel bootstrapping scheme. \cite{BVKS20} also works with the first moment of the \emph{eigenvalue overlaps} on the real line (i.e., $\sum_{\lambda_i \in \mb{R}}\kappa(\lambda_i)$), whereas we work exclusively with the (conditional) second moment of \emph{all} the eigenvalue overlaps (i.e. the conditional expectation of $\sum_{\lambda_i \in \mb{C}}\kappa^{2}(\lambda_i)$) (see \cite{BVKS20} for further discussion). 

\begin{itemize}
\item Compared to \cite{BVKS20}, our \cref{thm:main} obtains the near optimal dependence of $\delta^{-1}\log({\delta^{-1}})$ (indeed, \cite{BKMS19} proved a lower bound of $\delta^{-1+1/n}$, even using deterministic complex perturbations, instead of just random real perturbations), whereas \cite{BVKS20} obtains a dependence of $\delta^{-3/2}$. For the real Davies' conjecture \cref{eqn:davies-conjecture}, our bound leads to a resolution (up to log-factors), whereas \cite{BVKS20} obtain the weaker dependence of $\varepsilon^{2/5}$.

\item For quantitative estimates on the eigenvalue gaps, our \cref{thm:eigenvalue-gaps} and \cref{prop:spacing} have a better rate of $\tilde{O}(s)$ (i.e. near linear in the gap size), whereas \cite{BVKS20} achieve a weaker rate of $\tilde{O}(s^{1/3})$. 

\item Compared to our work, \cite{BVKS20} derive logarithmically better quantitative estimates on the first moment of the eigenvalue overlaps on the real line. They also derive non-asymptotic estimates with improved rates for the lower tails of intermediate singular values of random matrices with arbitrary mean profiles; this subject is not considered in our work. 

\item Finally, we remark that since we do not work with the limiting expression in \cref{lem:vol-limit}, our techniques are likely to shed light on \cite[Problem~7.1]{BVKS20}; we intend to return to this in future work.
\end{itemize}
\begin{comment}
\begin{proposition}\label{thm:complex}
Let $\xi$ be a sub-Gaussian complex random variable with bounded density. Given $A\in\mb{C}^{n\times n}$ with $\snorm{A} = 1$ and $\delta\in(0,1)$, there are $C_1,C_2$ depending only on $\xi$ such that we have
\[\mb{P}[\kappa_V(A+\delta G_n)\le C_1 n^2\delta^{-1}\text{ and }\snorm{\delta G_n}\le C_2 \delta]\ge 1/2\]
for $G_n = G_n(\xi)$.
\end{proposition}
\begin{remark}
The constant $C_2$ depends only on the sub-Gaussian moment of $\xi$ and $C_1$ depends on the density bound and sub-Gaussian moment of $\xi$. The sub-Gaussian condition can easily be relaxed to a fourth moment assumption with no cost and to finiteness of the second moment by incurring additional polynomial factors.
\end{remark}
\end{comment}
%We prove \cref{thm:main} by proving an analogue for real perturbations, \cref{thm:real}.

\section{Preliminaries}\label{sec:preliminaries}
%We give some preliminaries on random variables, condition numbers, the pseudospectrum, and other miscellaneous notations.
\subsection{Random variables}\label{sub:random-variables}
A (complex) random variable $X$ is said to be \emph{sub-Gaussian} if there exist constants $c,C>0$ such that for all $t\geq 0$,
\[\mb{P}[|X| > t]\le Ce^{-ct^2}.\]
The collection of sub-Gaussian random variables on an underlying probability space form a normed space under the \emph{sub-Gaussian norm}
\[\snorm{X}_{\psi_2} := \inf\{t > 0: \mb{E}[e^{(X/t)^2}-1]\le 1\}.\]
For more on sub-Gaussian random variables, we refer the reader to \cite{Ver18}. 
Throughout this paper, we will assume that $\xi$ is a sub-Gaussian random variable. In \cref{sec:general-complex}, it will be complex-valued, and admit a density function that is continuous and bounded by some constant $K$. Otherwise, it will be real-valued, having a density function that is continuous and bounded by some constant $K$.

Finally, we define $G_n(\xi)$ to be the random matrix such that $n^{1/2}G_n(\xi)$ is distributed as an $n\times n$ matrix, each of whose entries is an independent copy of $\xi$.

\subsection{Hyperplane distances}\label{sub:distances}
We will use $\on{dist}(X,H)$ to denote the distance of a vector $X$ to a hyperplane $H$. Note that we will often be considering the distance between complex vectors $X$ and complex hyperplanes $H$, so more formally, the distance is defined as
\[\on{dist}(X,H) := \snorm{\on{proj}_{H^\perp}X}_{2},\]
where $H^\perp$ is the orthogonal complement of $H$ in $\mb{R}^{n}$ or $\mb{C}^{n}$ and $\on{proj}$ is the usual Euclidean projection. %Note that this agrees with the standard notion of distance in Euclidean space. 
In particular, if $Z$ has unit length and is normal to $H$, we have
\[\on{dist}(X,H)\ge |\sang{X,Z}|,\]
with equality when $H$ has codimension $1$ (and more generally, when $\on{proj}_{H^\perp}X$ and $Z$ are scalar multiples).

\subsection{Condition numbers and the pseudospectrum}\label{sub:condition-numbers}
Given any $n\times n$ invertible matrix $M$, we define its \emph{condition number} to be $\kappa(M) := \sigma_1(M)/\sigma_n(M) = \snorm{M}\snorm{M^{-1}}$, where $\sigma_1(M)\ge\cdots\ge\sigma_n(M)$ are the singular values of $M$, i.e. the eigenvalues of $(M^\dagger M)^{1/2}$ ($\dagger$ denotes the conjugate transpose), and $\snorm{M} = \sigma_1(M)$ is the standard $\ell^{2} \to \ell^{2}$ operator norm of $M$.

Given a diagonalizable matrix $M$, we define its \emph{eigenvector condition number} to be
\[\kappa_V(M) := \inf_{W: M = WDW^{-1}}\snorm{W}\snorm{W^{-1}}.\]
Furthermore, suppose $M$ has distinct eigenvalues, and  consider its spectral expansion
\[M := \sum_{i=1}^n\lambda_iv_iw_i^\dagger = VDV^{-1},\]
where the right and left eigenvectors $v_i$ and $w_i^\dagger$ are the columns and rows of $V$ and $V^{-1}$, respectively, normalized so that  $w_i^\dagger v_i = 1$. Then, we define the \emph{eigenvalue condition number} of $\lambda_i$ by
\[\kappa(\lambda_i,M) := \snorm{v_iw_i^\dagger} = \snorm{v_i}_2\snorm{w_i}_2.\]
We will also make use of the quantity
\[\kappa_2(M) := \sqrt{\sum_{i=1}^n\kappa(\lambda_i,M)^2}.\]
We will often suppress the dependence on $M$, writing in particular $\kappa_2$ and $\kappa(\lambda_i)$ when the underlying matrix is clear from context.

Next, for any $\varepsilon \geq 0$, we define the $\varepsilon$\emph{-pseudospectrum} of an $n\times n$ matrix $M$ to be
\[\Lambda_\varepsilon(M) := \{z\in\mb{C}: \sigma_n(M-zI)\le\varepsilon\} = \{z\in\mb{C}: \snorm{(M-zI)^{-1}}\ge\varepsilon^{-1}\}.\]
Note, in particular, that $\Lambda_0(M)$ is the spectrum of $M$. 
For a comprehensive treatment of pseudospectra, we refer the reader to the book of Trefethen and Embree.

We will also need the following equivalent characterization of the $\varepsilon$-pseudospectrum (see \cite{TE05} for a proof)
\[\Lambda_\varepsilon(M) = \{z\in\mb{C}: M+E\text{ has eigenvalue }z\text{ for some }\snorm{E}\le\varepsilon\}.\]
Finally, denote the minimum eigenvalue gap of $M$ by $\eta(M)$, i.e.
\[\eta(M) := \min_{i\neq j}|\lambda_i-\lambda_j|,\] where
$\lambda_i$ are the eigenvalues (counting multiplicity) of $M$.\\

%\section{Pseudospectral Estimates}\label{sec:pseudospectrum}
We will need some estimates related to the eigenvector condition number and  pseudospectral volume from \cite{BKMS19}.
\begin{lemma}[{\cite[Lemma~3.1]{BKMS19}}]\label{lem:kappaV-upper}
Let $M$ be an $n \times n$ matrix with distinct eigenvalues, and let $V$ be the matrix whose columns are the eigenvectors of $M$ normalized to have unit norm. Then, 
\[\kappa(V)\le\sqrt{n\sum_{i=1}^n\kappa(\lambda_i)^2}.\]
\end{lemma}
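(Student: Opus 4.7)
The proof should be short and follow directly from comparing operator norms with Frobenius norms. My plan is the following.

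First, I would write the spectral decomposition using the hypothesis that $V$ has unit-norm columns. Let $v_1,\dots,v_n$ be the columns of $V$, so $\snorm{v_i}_2=1$ by assumption. Because $M$ has distinct eigenvalues, $V$ is invertible, and writing $V^{-1}$ in rows as $w_1^\dagger,\dots,w_n^\dagger$, the relation $V^{-1}V = I$ gives the biorthogonality $w_i^\dagger v_j = \delta_{ij}$. In particular, the rows of $V^{-1}$ are precisely the left eigenvectors of $M$ dual to the $v_i$'s, normalized so $w_i^\dagger v_i=1$. Then by the definition of $\kappa(\lambda_i)$ given in the excerpt, $\kappa(\lambda_i) = \snorm{v_i w_i^\dagger} = \snorm{v_i}_2\snorm{w_i}_2 = \snorm{w_i}_2$.

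Next I would use the standard bound $\snorm{A}\le\snorm{A}_F$ for the operator norm versus the Frobenius norm, applied separately to $V$ and $V^{-1}$. Since each column of $V$ has unit norm,
\[\snorm{V}^2\le\snorm{V}_F^2=\sum_{i=1}^n\snorm{v_i}_2^2=n.\]
Since the rows of $V^{-1}$ are the $w_i^\dagger$,
\[\snorm{V^{-1}}^2\le\snorm{V^{-1}}_F^2=\sum_{i=1}^n\snorm{w_i}_2^2=\sum_{i=1}^n\kappa(\lambda_i)^2.\]

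Finally, multiplying these two bounds gives
\[\kappa(V)=\snorm{V}\snorm{V^{-1}}\le\sqrt{n\sum_{i=1}^n\kappa(\lambda_i)^2},\]
which is exactly the claimed inequality. There is no real obstacle here: the one subtle point worth highlighting carefully is the identification of the rows of $V^{-1}$ with the properly normalized left eigenvectors, so that $\snorm{w_i}_2$ matches the quantity $\kappa(\lambda_i)$ defined earlier; after that, everything is just the Frobenius-operator norm inequality applied twice.
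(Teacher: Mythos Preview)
Your proof is correct. The paper does not supply its own proof of this lemma but simply cites \cite[Lemma~3.1]{BKMS19}; your argument---bounding $\snorm{V}$ and $\snorm{V^{-1}}$ separately by their Frobenius norms after identifying the rows of $V^{-1}$ with the dual left eigenvectors---is exactly the standard proof and matches the one in \cite{BKMS19}.
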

\begin{remark}
This implies that $\kappa_V(M)\le \sqrt{n}\kappa_2(M)$.
\end{remark}
\begin{lemma}[{\cite[Lemma~3.2]{BKMS19}}]\label{lem:vol-limit}
Let $M$ be an $n\times n$ matrix with distinct eigenvalues $\lambda_1,\ldots,\lambda_n$, and let $B\in \mathbb{C}$ be a measurable open set. Then 
\[\lim_{\varepsilon\to 0}\frac{\on{vol}(\Lambda_\varepsilon(M)\cap B)}{\varepsilon^2} = \pi\sum_{\lambda_i\in B}\kappa(\lambda_i)^2.\]
\end{lemma}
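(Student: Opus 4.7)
The plan is to localize the pseudospectrum near each eigenvalue using the Mittag--Leffler (partial fraction) expansion of the resolvent, and to show that each local component is, to leading order, a Euclidean disk of radius $\kappa(\lambda_i)\varepsilon$.

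First, I would use the spectral decomposition $M = \sum_{i=1}^n \lambda_i v_i w_i^\dagger$ with $w_j^\dagger v_i = \delta_{ij}$ to write, for $z \notin \on{spec}(M)$,
$$(M - zI)^{-1} = -\sum_{i=1}^n \frac{v_i w_i^\dagger}{z - \lambda_i} = -\frac{v_i w_i^\dagger}{z - \lambda_i} + R_i(z),$$
where $R_i(z) := -\sum_{j \neq i} \frac{v_j w_j^\dagger}{z - \lambda_j}$ is holomorphic on the open disk $D_i := \{|z - \lambda_i| < \eta(M)/3\}$, and hence bounded there by some constant $C_i = C_i(M)$. Using that $\|v_i w_i^\dagger\| = \|v_i\|_2 \|w_i\|_2 = \kappa(\lambda_i)$ for a rank-one operator, both triangle inequalities yield
$$\left| \snorm{(M-zI)^{-1}} - \frac{\kappa(\lambda_i)}{|z - \lambda_i|} \right| \le C_i \quad \text{for all } z \in D_i \setminus\{\lambda_i\}.$$
Consequently, on $D_i$, the defining condition $z \in \Lambda_\varepsilon(M)$, namely $\snorm{(M-zI)^{-1}} \ge \varepsilon^{-1}$, is equivalent to $|z - \lambda_i| \le \kappa(\lambda_i)\varepsilon\bigl(1 + O(\varepsilon)\bigr)$. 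So the local component $\Lambda_\varepsilon^{(i)} := \Lambda_\varepsilon(M) \cap D_i$ is sandwiched between Euclidean disks of radius $\kappa(\lambda_i)\varepsilon(1 \pm O(\varepsilon))$ centered at $\lambda_i$, and therefore has area $\pi \kappa(\lambda_i)^2 \varepsilon^2 (1 + O(\varepsilon))$.

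Next, I would argue that for $\varepsilon$ small enough the $\Lambda_\varepsilon^{(i)}$ are pairwise disjoint and cover all of $\Lambda_\varepsilon(M)$. Pairwise disjointness is immediate from the choice of radius $\eta(M)/3$. Exhaustion follows from the global resolvent estimate $\snorm{(M-zI)^{-1}} \le \kappa_V(M)/\on{dist}(z, \on{spec}(M))$: any $z \in \Lambda_\varepsilon(M)$ must satisfy $\on{dist}(z, \on{spec}(M)) \le \kappa_V(M)\varepsilon$, which is less than $\eta(M)/3$ once $\varepsilon$ is small. Finally, since $B$ is open, for $\varepsilon$ small enough every $\Lambda_\varepsilon^{(i)}$ with $\lambda_i \in B$ is contained in $B$, while every $\Lambda_\varepsilon^{(j)}$ with $\lambda_j \notin \overline{B}$ is disjoint from $B$. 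Summing and dividing by $\varepsilon^2$ yields the stated limit.

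The main obstacle is really twofold. The first is the uniform exhaustion step: it is essential that $\snorm{(M-zI)^{-1}}$ cannot reach $\varepsilon^{-1}$ far from the spectrum, which is why one needs the $\kappa_V(M)$-bound above rather than just the local expansion. The second is eigenvalues lying on $\partial B$: for such $\lambda_i$, the local disk $\Lambda_\varepsilon^{(i)}$ may intersect $B$ in an arbitrary fraction of its area, so the ratio need not converge. The cleanest reading of the statement tacitly assumes $\on{spec}(M) \cap \partial B = \emptyset$ (or more strongly, that $\partial B$ has locally finite $1$-dimensional measure near the spectrum, in which case the contribution from boundary strips is $o(\varepsilon^2)$ and the conclusion is unaffected). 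Modulo this, the entire argument reduces to the local disk picture coming from the resolvent's partial-fraction expansion.
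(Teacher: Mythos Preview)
The paper does not actually prove this lemma; it is quoted verbatim from \cite[Lemma~3.2]{BKMS19} without proof. So there is no ``paper's own proof'' to compare against. That said, your argument is correct and is essentially the standard one (and matches the approach implicit in the paper's proof of the quantitative variant, \cref{lem:vol-bound}, which invokes the same resolvent partial-fraction expansion via \cite[Equation~(52.11)]{TE05}). Your identification of the boundary issue ($\lambda_i\in\partial B$) is apt; the lemma as stated should be read with the tacit assumption that no eigenvalue lies on $\partial B$.
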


For our treatment of real perturbations, we will also require the following quantitative version of the previous lemma. 

\begin{lemma}\label{lem:vol-bound}
Let $M$ be an $n \times n$ matrix with distinct eigenvalues $\lambda_i$. Furthermore, let \[0 < \varepsilon \le \frac{\eta(M)}{2n\kappa_2(M)}.\]
Then,
\[\frac{\on{vol}(\Lambda_{\varepsilon}(M)\cap \mc{D}(0,2\snorm{M}))}{\varepsilon^2}\ge\frac{\pi}{8}\kappa_2(M)^2.\]
\end{lemma}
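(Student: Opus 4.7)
The plan is to place a small disc $\mc{D}(\lambda_i, r_i)$ of radius $r_i := \varepsilon\kappa(\lambda_i)/2$ around each eigenvalue $\lambda_i$ of $M$, verify that the $n$ discs are pairwise disjoint and all lie inside $\Lambda_\varepsilon(M)\cap \mc{D}(0,2\snorm{M})$, and sum their areas. This yields
\[
\on{vol}\bigl(\Lambda_\varepsilon(M)\cap\mc{D}(0,2\snorm{M})\bigr) \ge \sum_{i=1}^n \pi r_i^2 = \frac{\pi}{4}\varepsilon^2\sum_{i=1}^n\kappa(\lambda_i)^2 = \frac{\pi}{4}\varepsilon^2\kappa_2(M)^2,
\]
which is even stronger than the claimed $\pi/8$ bound.

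The main step is the pseudospectral containment. Using the spectral decomposition to write the resolvent as
\[
(M-zI)^{-1} = \frac{P_i}{\lambda_i-z} + R_i(z), \qquad R_i(z) := \sum_{j\neq i}\frac{P_j}{\lambda_j-z},
\]
with $P_j:= v_jw_j^\dagger$ the spectral projectors satisfying $\snorm{P_j} = \kappa(\lambda_j)$, the triangle inequality lower-bounds $\snorm{(M-zI)^{-1}}$ by $\kappa(\lambda_i)/|\lambda_i - z| - \snorm{R_i(z)}$. For $z\in \mc{D}(\lambda_i, r_i)$, the first term is at least $2/\varepsilon$. The hypothesis on $\varepsilon$ gives $r_i \le \varepsilon\kappa_2(M)/2 \le \eta(M)/(4n) \le \eta(M)/2$, so every other pole $\lambda_j$ is at distance at least $\eta(M)/2$ from $z$, and Cauchy--Schwarz yields
\[
\snorm{R_i(z)} \le \frac{2}{\eta(M)}\sum_{j\neq i}\kappa(\lambda_j) \le \frac{2\sqrt{n}\,\kappa_2(M)}{\eta(M)} \le \frac{1}{\varepsilon},
\]
invoking $\varepsilon\le \eta(M)/(2n\kappa_2(M))$ once more. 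Hence $\snorm{(M-zI)^{-1}}\ge 1/\varepsilon$ throughout the disc, so $\mc{D}(\lambda_i, r_i)\subseteq \Lambda_\varepsilon(M)$.

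Disjointness of the discs and containment in $\mc{D}(0,2\snorm{M})$ fall out of the same smallness: any common point of $\mc{D}(\lambda_i,r_i)$ and $\mc{D}(\lambda_j,r_j)$ for $i\ne j$ would force $\eta(M)\le r_i+r_j\le \varepsilon\kappa_2(M)\le \eta(M)/(2n)$, a contradiction; and every point satisfies $|z|\le \snorm{M} + r_i\le (1+1/(2n))\snorm{M}<2\snorm{M}$, using the trivial bound $\eta(M)\le 2\snorm{M}$. The only non-routine ingredient is the off-diagonal resolvent estimate on $R_i(z)$, which is precisely what the hypothesis on $\varepsilon$ is calibrated to handle; the slack between $\pi/4$ and $\pi/8$ comfortably absorbs any constant-factor losses.
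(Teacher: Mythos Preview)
Your proof is correct and follows the same general strategy as the paper --- placing discs of radius $\varepsilon\kappa(\lambda_i)/2$ around the eigenvalues and verifying they lie in $\Lambda_\varepsilon(M)\cap\mc{D}(0,2\snorm{M})$ --- but your handling of the resolvent remainder is cleaner. The paper restricts attention to the set $J = \{j:\kappa(\lambda_j)^2\ge\kappa_2^2/(2n)\}$ of eigenvalues with ``large'' condition number, so that it can bound the tail $\sum_{i\ne j}\kappa(\lambda_i)/|z-\lambda_i|$ by $\tfrac{1}{2}\kappa(\lambda_j)/|z-\lambda_j|$ via Cauchy--Schwarz and the relation $\kappa_2^2\le 2n\kappa(\lambda_j)^2$; this loses half the mass and yields the constant $\pi/8$. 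You instead bound the tail directly by $1/\varepsilon$ using only $|z-\lambda_j|\ge\eta(M)/2$, Cauchy--Schwarz on $\sum_{j\ne i}\kappa(\lambda_j)\le\sqrt{n}\,\kappa_2$, and the hypothesis on $\varepsilon$. This lets you keep all $n$ discs and obtain the sharper constant $\pi/4$, while also making clear that the hypothesis is stronger than needed: $\varepsilon\le\eta(M)/(2\sqrt{n}\,\kappa_2)$ would already suffice for your argument.
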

\begin{remark}
Recall that $\kappa_2(M)^2 = \sum_{i=1}^n\kappa(\lambda_i, M)^2$.
\end{remark}
\begin{proof}
First, note that for all $\lambda_i$ we have $\mc{D}(\lambda_i,\kappa(\lambda_i)\varepsilon)\subseteq\mc{D}(0,2\snorm{M})$; this follows from the triangle inequality since $|\lambda_i|\le\snorm{M}$, and $\varepsilon\le \eta(M)/\kappa_2 \le \snorm{M}/\kappa(\lambda_i)$. 

Second, note that $\mc{D}(\lambda_i,\kappa(\lambda_i)\varepsilon)$ and $\mc{D}(\lambda_j,\kappa(\lambda_j)\varepsilon)$ are disjoint for $i\neq j$; indeed, $|\lambda_i-\lambda_j|\ge \eta(M)\ge 2\varepsilon \kappa_2 \geq (\kappa(\lambda_i)+\kappa(\lambda_j))\varepsilon$.

Third, by \cite[Equation~(52.11)]{TE05}, we have that for $z\in \mc{D}(\lambda_j,\kappa(\lambda_j)\varepsilon)$,
\[\bigg|\snorm{(zI-M)^{-1}}-\frac{\kappa(\lambda_j)}{|z-\lambda_j|}\bigg|\le \sum_{i\neq j} \frac{\kappa(\lambda_i)}{|z-\lambda_i|}.\]

Let $J := \{j\in[n]: \kappa(\lambda_j)^2\ge\kappa_2^2/(2n)\}$, and observe that \[\sum_{j\in J}\kappa(\lambda_j)^2\ge \frac{1}{2}\kappa_2^2.\]

We claim that for each $j\in J$, 
\[\mc{D}(\lambda_j,\kappa(\lambda_j)\varepsilon/2)\subseteq\Lambda_\varepsilon(M)\cap\mc{D}(0,2\snorm{M}),\]
and moreover, that these discs are disjoint. Given this claim, we are done, since then, \[\frac{\on{vol}(\Lambda_{\varepsilon}(M)\cap \mc{D}(0,2\snorm{M}))}{\varepsilon^2}\ge\frac{\pi}{4}\sum_{j\in J}\kappa(\lambda_j)^2\ge\frac{\pi}{8}\kappa_2^2.\]

We now prove the claim. The disjointness of the discs follows from the second point above, and from the first point above, we know that $\mc{D}(\lambda_j, \kappa(\lambda_j)\varepsilon/2) \subseteq \mc{D}(0,2 \snorm{M})$. Hence, it only remains to prove that $\mc{D}(\lambda_j, \kappa(\lambda_j)\varepsilon/2) \subseteq \Lambda_\varepsilon(M)$. 

Fix some $j \in J$. If $z\in \mc{D}(\lambda_j,\kappa(\lambda_j)\varepsilon/2)$, then in particular, we have $z\in\mc{D}(\lambda_j,|\lambda_i-\lambda_j|/(4n))$ for each $i\neq j$, so that
\[|z-\lambda_j|\le\frac{|z-\lambda_i|}{4n-1}.\]
%Now if $j$ satisfies
%\begin{align*}\label{eq:cond}
%\kappa(\lambda_j)^2\ge %\frac{\kappa_2^2}{2n},
%\end{align*}
%we have that 
Using this, the assumption that $j\in J$, and Cauchy-Schwarz, we have that
\begin{align*}
\sum_{i\neq j} \frac{\kappa(\lambda_i)}{|z-\lambda_i|}
&\le \left(\sum_{i=1}^{n}\kappa(\lambda_i)^2\right)^{1/2}\left(\sum_{i\neq j}\frac{1}{|z-\lambda_i|^2}\right)^{1/2}\\
&\le (2n \kappa(\lambda_j)^2)^{1/2}\left(\frac{(n-1)}{(4n-1)^2|z-\lambda_j|^2}\right)^{1/2}\\
&\le \frac{\kappa(\lambda_j)}{2|z-\lambda_j|}.
\end{align*}
Thus, by the third point above, we obtain
\[\snorm{(zI-M)^{-1}}\ge \frac{1}{2}\frac{\kappa(\lambda_j)}{|z-\lambda_j|}\ge \frac{1}{\varepsilon},\]
so that, by definition, $z \in \Lambda_{\varepsilon}(M)$, as desired. 
%All together, we deduce that
%\[\mc{D}(\lambda_j,\kappa(\lambda_j)\varepsilon/2)\subseteq\Lambda_\varepsilon(M)\cap\mc{D}(0,2\snorm{M})\]
%for each $j\in J$, where  Furthermore, these disks are disjoint. 
\begin{comment}
Noting that
\[\sum_{j\in J}\kappa(\lambda_j)^2\ge \frac{1}{2}\kappa_2^2,\]
it follows that
\[\frac{\on{vol}(\Lambda_{\varepsilon}(M)\cap \mc{D}(0,2\snorm{M}))}{\varepsilon^2}\ge\frac{\pi}{4}\sum_{j\in J}\kappa(\lambda_j)^2\ge\frac{\pi}{8}\kappa_2^2.\qedhere\]
\end{comment}
\end{proof}

\section{Warm-up: General Complex Perturbations}\label{sec:general-complex}
The proof of Davies' conjecture in \cite{BKMS19} shows that, with high probability, the addition of a suitably scaled copy of $G_{n}(\mc{N}_{\mb{C}}(0,1))$, where $\mc{N}_{\mb{C}}(0,1)$ is the standard complex Gaussian, regularizes the eigenvector condition number. Here, as a warm-up to the much more involved real case, we provide a brief outline of how to achieve such a regularization using $G_{n}(\xi)$, where $\xi$ is any sub-Gaussian complex random variable with bounded density. %(we remark that the sub-Gaussian assumption may be replaced by the finiteness of the second moment of $\xi$; we will explain this more carefully in the real case). 
More precisely, we will show the following.
\begin{proposition}\label{thm:complex}
Let $\xi$ be a sub-Gaussian complex random variable with bounded density. Given $A\in\mb{C}^{n\times n}$ with $\snorm{A} = 1$ and $\delta\in(0,1)$, there are $C_1,C_2$ depending only on $\xi$ such that we have
\[\mb{P}[\kappa_V(A+\delta G_n)\le C_1 n^2\delta^{-1}\text{ and }\snorm{\delta G_n}\le C_2 \delta]\ge 1/2\]
for $G_n = G_n(\xi)$.
\end{proposition}
\begin{remark}
The constant $C_2$ depends only on the sub-Gaussian moment of $\xi$ and $C_1$ depends on the density bound and sub-Gaussian moment of $\xi$. The sub-Gaussian condition can easily be relaxed to a fourth moment assumption with no cost and to finiteness of the second moment by incurring additional polynomial factors.
\end{remark}

Our outline is identical to the one in \cite{BKMS19}; the only difference is that \cite[Lemma~2.2]{BKMS19} is replaced by \cref{lem:operator-tail}, and \cite[Lemma~3.3]{BKMS19} is replaced by \cref{lem:general-small-ball}.\\    
%We will not re-run their entire argument here; the only estimates from their inequality that need to be replaced are \cite[Lemma~2.2]{BKMS19} and \cite[Lemma~3.3]{BKMS19}. We give these here; the first is a standard observation regarding the operator norm of an i.i.d. matrix with sub-Gaussian entries.

For the remainder of this section, we will write $G_{n}(\xi)$ simply as $G_{n}$. We begin with a standard bound on the operator norm of an i.i.d. matrix with sub-Gaussian entries.
\begin{lemma}[{From \cite[Theorem~4.6.1]{Ver18}}]\label{lem:operator-tail}
For an absolute constant $K$ depending only on the sub-Gaussian moment of $\xi$,
\[\mb{P}[\sigma_1(G_n)\ge K + t]\le\exp(-nt^2).\]
\end{lemma}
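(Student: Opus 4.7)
The plan is to follow the standard $\varepsilon$-net argument from high-dimensional probability. Writing $M = n^{1/2}G_n$, so that the entries $M_{ij}$ are i.i.d.\ copies of $\xi$, the goal becomes
\[\mb{P}\bigl[\snorm{M}\ge n^{1/2}(K+t)\bigr]\le\exp(-nt^2).\]
By the usual approximation lemma, if $\mc{N}$ is a $(1/4)$-net of the unit sphere in $\mb{C}^{n}$ (or $\mb{R}^{n}$), then $\snorm{M}\le 2\sup_{x,y\in\mc{N}}|\sang{Mx,y}|$, and a volume argument gives $|\mc{N}|\le 9^{2n}$ (the square accounts for the real dimension of $\mb{C}^{n}$).

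Next I would bound $|\sang{Mx,y}|$ for a fixed pair $(x,y)\in\mc{N}\times\mc{N}$. Expanding, $\sang{Mx,y} = \sum_{i,j} M_{ij}x_j \overline{y_i}$ is a linear combination of independent sub-Gaussian variables with coefficients $x_j\overline{y_i}$ satisfying $\sum_{i,j}|x_j\overline{y_i}|^2 = 1$. Standard Hoeffding-type concentration for sums of independent sub-Gaussians therefore gives
\[\mb{P}\bigl[|\sang{Mx,y}|\ge s\bigr]\le 2\exp\bigl(-cs^2/\snorm{\xi}_{\psi_2}^2\bigr)\]
for an absolute constant $c>0$.

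A union bound over $\mc{N}\times\mc{N}$ then yields
\[\mb{P}\bigl[\snorm{M}\ge 2s\bigr]\le 2\cdot 81^{2n}\exp\bigl(-cs^2/\snorm{\xi}_{\psi_2}^2\bigr).\]
Choosing $s = n^{1/2}(K+t)/2$ with $K = K(\snorm{\xi}_{\psi_2})$ large enough that the exponent $-c(K+t)^2 n/(4\snorm{\xi}_{\psi_2}^2) + (2\log 81)\cdot 2n$ becomes at most $-nt^2$ for all $t\ge 0$ (this uses that $(K+t)^2\ge K^2+t^2$ and picks $K$ to dominate the $\log 81$ terms by a constant factor) and then rescaling back by $n^{-1/2}$ gives the stated bound on $\sigma_1(G_n) = n^{-1/2}\snorm{M}$. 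There is no real obstacle here beyond bookkeeping of constants; the only point requiring any care is the complex case, where the net exponent doubles, but this is harmless since the entropy term is still $\exp(O(n))$ and can be absorbed into the constant $K$.
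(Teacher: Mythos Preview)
The paper does not give its own proof of this lemma; it simply cites \cite[Theorem~4.6.1]{Ver18}. Your $\varepsilon$-net argument is exactly the proof Vershynin gives there, so the proposal is correct and matches the approach the paper defers to (with the tacit assumption, also present in Vershynin's statement, that $\xi$ is centered---otherwise the deterministic part of $\sang{Mx,y}$ can be of order $n$ and the bound fails).
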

Next, we need a specialization of a lemma regarding the sum of multidimensional bounded density random variables from \cite{MMX17}. The result we require can also be derived from \cite{JL15}.
\begin{lemma}[{From \cite[Theorem~1.1]{MMX17}}]
\label{lem:density-bound}
Given real numbers $a_1,\ldots,a_n$ with $\sum_{i=1}^na_i^2=1$ and $\mb{R}^2$-valued continuous random variables $X_1,\ldots,X_n$ with densities bounded by $K$, we have that
\[a_1X_1+\cdots+a_nX_n\]
is a random variable with density bounded by $eK$.
\end{lemma}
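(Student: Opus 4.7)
The plan is to pass to the Fourier side. Let $\varphi_i(\xi) := \widehat{f_i}(\xi)$ denote the characteristic function of $X_i$; by independence and rescaling, the characteristic function of $S := a_1 X_1 + \cdots + a_n X_n$ factors as $\varphi_S(\xi) = \prod_{i=1}^n \varphi_i(a_i \xi)$. Fourier inversion followed by the triangle inequality then gives
\[\snorm{f_S}_\infty \le \frac{1}{(2\pi)^2} \int_{\mb{R}^2} \prod_{i=1}^n |\varphi_i(a_i \xi)|\, d\xi,\]
so the task reduces to bounding this integral by a constant multiple of $(2\pi)^2 K$, with constant $e$.

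Two inputs on each $\varphi_i$ drive the estimate. First, $\int f_i^2 \le \snorm{f_i}_\infty \int f_i \le K$, so Plancherel gives $\snorm{\varphi_i}_2^2 \le (2\pi)^2 K$, and more generally Hausdorff--Young in dimension two yields $\snorm{\varphi_i}_q \lesssim K^{1/q}$ for every $q \ge 2$. Second, after rescaling, $\snorm{\varphi_i(a_i\cdot)}_q = |a_i|^{-2/q}\snorm{\varphi_i}_q$. A multilinear H\"older inequality with exponents $q_i \ge 2$ satisfying $\sum 1/q_i = 1$ then produces bounds of the form $K\prod |a_i|^{-2/q_i}$, up to universal constants. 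I would split off the easy case $\max_i a_i^2 \ge 1/e$ first, where the trivial convolution bound $\snorm{f_S}_\infty \le \snorm{f_{a_i X_i}}_\infty \le K/a_i^2 \le eK$ already suffices. In the remaining regime (all $a_i^2 < 1/e$), I would choose the H\"older weights $1/q_i$ non-uniformly, giving larger weight to the larger $|a_i|$, so as to exploit the constraint $\sum a_i^2 = 1$ optimally.

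The main obstacle is extracting the precise constant $e$: a uniform choice $q_i = n$ combined with AM--GM on $\prod |a_i|^{2/n}$ only yields a bound growing like $nK$, so the non-uniform interpolation has to be done carefully. The constant $e$ is essentially tight and reflects a CLT heuristic: when all $|a_i| \approx 1/\sqrt{n}$ the sum is close to a Gaussian with covariance $\sum a_i^2\,\operatorname{Cov}(X_i)$, and among densities in $\mb{R}^2$ with sup-norm $K$ the minimum-variance configuration produces a Gaussian limit whose own density is again bounded by $K$; the factor $e$ then absorbs the non-Gaussian correction at finite $n$. For the delicate non-uniform interpolation step that yields the sharp constant, I would follow \cite{MMX17} or \cite{JL15}.
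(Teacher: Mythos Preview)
The paper does not prove this lemma: it is simply quoted as a specialization of \cite[Theorem~1.1]{MMX17}, with the remark that it can also be derived from \cite{JL15}. There is therefore no ``paper's own proof'' to compare against.

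As for your sketch: the easy case $\max_i a_i^2 \ge 1/e$ is correct and complete, since convolving with any probability density does not increase the sup norm, so $\snorm{f_S}_\infty \le \snorm{f_{a_iX_i}}_\infty = K/a_i^2 \le eK$. For the remaining case your Fourier/multilinear-H\"older framework is a reasonable starting point, but the whole difficulty lies precisely in the ``delicate non-uniform interpolation step'' needed for the sharp constant~$e$, and you ultimately defer that step back to \cite{MMX17} or \cite{JL15}. So the proposal is not really an independent proof but a reduction to the cited literature --- which is exactly what the paper already does by invoking the result as a black box.
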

%\begin{proof}
%Specialize \cite[Theorem~1.1]{MMX17} to the case $d = 2$ and $k = 1$.
%\end{proof}

As mentioned above, we need an analogue of  \cite[Lemma~3.3]{BKMS19} for general complex distributions with bounded density. This is a standard exercise in using the `invertibility-via-distance' approach (see, e.g. \cite{Tik17}). %essentially a standard exercise in reducing singular value estimates to the distance of a random vector to a hyperplane.
\begin{lemma}[Small ball estimate for $\sigma_n$]\label{lem:general-small-ball}
For any fixed $A\in\mb{C}^{n\times n}$ and for all $\epsilon  \geq 0$,
\[\mb{P}[\sigma_n(A+\delta G_n)\le\epsilon]\le\pi eKn^3\frac{\epsilon^2}{\delta^2}.\]
\end{lemma}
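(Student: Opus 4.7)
The plan is to execute the classical ``invertibility-via-distance'' reduction (in the spirit of Rudelson--Vershynin and Tao--Vu), which turns a small ball estimate for $\sigma_n$ into a one-dimensional small ball estimate for a linear combination of independent summands. Let $M := A+\delta G_n$ with columns $X_1,\dots,X_n$, and let $H_k := \on{span}(X_j : j\neq k)$. A standard linear algebra argument shows that if $\sigma_n(M) \le \epsilon$, then a unit vector $v$ witnessing $\snorm{Mv}\le\epsilon$ has some coordinate of modulus at least $1/\sqrt n$, which after rearranging gives $\on{dist}(X_k,H_k)\le \sqrt n\,\epsilon$ for the corresponding $k$. A union bound then yields
\[\mb{P}[\sigma_n(A+\delta G_n)\le\epsilon]\le \sum_{k=1}^n \mb{P}\bigl[\on{dist}(X_k,H_k)\le\sqrt n\,\epsilon\bigr].\]

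Next, I would condition on $\{X_j\}_{j\neq k}$, which determines $H_k$. Almost surely $H_k$ has codimension one, so one can choose a unit normal $w \in \mb{C}^n$; by the discussion in \cref{sub:distances}, $\on{dist}(X_k,H_k) \ge |\sang{X_k,w}|$. Writing the $k$th column as $X_k = A_k + (\delta/\sqrt n)(\xi_{1k},\dots,\xi_{nk})^{T}$ with the $\xi_{ik}$ i.i.d.\ copies of $\xi$, we get
\[\sang{X_k,w} = \sang{A_k,w} + \frac{\delta}{\sqrt n}\sum_{i=1}^n \bar w_i\, \xi_{ik}.\]
The summands on the right are independent $\mb{C}$-valued random variables each with density at most $K$, but the coefficients $\bar w_i$ are complex, whereas \cref{lem:density-bound} requires real coefficients. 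I would resolve this by writing $\bar w_i = r_i e^{-i\theta_i}$ with $r_i \ge 0$ and $\sum_i r_i^2 = 1$, and absorbing each phase into the corresponding independent variable: $\xi_{ik}' := e^{-i\theta_i}\xi_{ik}$ is still $\mb{R}^2$-valued with density bounded by $K$, since rotation is a measure-preserving change of coordinates. \cref{lem:density-bound} then applies to the real-coefficient sum $\sum_i r_i \xi_{ik}'$, giving density bounded by $eK$.

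Scaling by $\delta/\sqrt n$ (which multiplies densities on $\mb{R}^2$ by $n/\delta^2$) and translating by the deterministic $\sang{A_k,w}$, one concludes that, conditional on $\{X_j\}_{j\neq k}$, the variable $\sang{X_k,w}$ has density bounded by $eKn/\delta^2$ on $\mb{R}^2$. The probability that such a variable lies in a disk of radius $\sqrt n\,\epsilon$ is at most $(eKn/\delta^2)\cdot \pi n\epsilon^2 = \pi eK n^2\epsilon^2/\delta^2$. Taking expectations over the conditioning and summing over the $n$ choices of $k$ yields the advertised bound $\pi eKn^3\epsilon^2/\delta^2$.

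The only genuine subtlety is the handling of complex coefficients in \cref{lem:density-bound}, resolved by the rotation trick above; the rest is routine. In particular, no heavier small-ball machinery (essential LCD, structure theorems) is needed here, because the bounded density hypothesis on $\xi$ yields a strong one-dimensional small ball estimate directly via \cref{lem:density-bound}.
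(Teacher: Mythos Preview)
Your proposal is correct and follows essentially the same approach as the paper: the invertibility-via-distance reduction, conditioning on the remaining columns, and the application of \cref{lem:density-bound} after absorbing the complex phases of the normal vector's coordinates into the independent random variables (your ``rotation trick'' is exactly what the paper does when it writes $\sang{M_i,N_i}=\sum_j M_{i,j}e^{i\theta_j}|N_{i,j}|$). The only cosmetic difference is that you separate out the deterministic shift $A_k$ and the scaling $\delta/\sqrt n$ explicitly, whereas the paper absorbs them into the density bound $Kn\delta^{-2}$ on each $M_{i,j}$ directly.
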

\begin{proof}
Let $M = A + \delta G_n$, with columns $M_i$ for $1\le i\le n$. Recall that $\sigma_{n}(M)$ admits the following variational characterization:
$$\sigma_{n}(M) = \inf_{x\in \mb{S}^{n-1}}\|Mx\|_{2}.$$

Define hyperplanes $H_i = \on{span}\{M_j: j\neq i\}$. The key point is that any vector $x\in \mb{S}^{n-1}$ has a coordinate, say $x_1$, satisfying $x_1\ge n^{-1/2}$, in which case
\[\snorm{Mx}_2\ge\on{dist}(Mx,H_1) = \on{dist}(x_1M_1,H_1)\ge n^{-1/2}\on{dist}(M_1,H_1).\]
Therefore, by the union bound,
\[\mb{P}[\sigma_n(M)\le\epsilon] = \mb{P}\left[\inf_{x\in \mb{S}^{n-1}}\snorm{Mx}_2\le\epsilon\right]\le\sum_{i=1}^n\mb{P}[\on{dist}(M_i,H_i)\le\epsilon n^{1/2}].\]
Now, for each $1\le i\le n$,
\[\mb{P}[\on{dist}(M_i,H_i)\le\epsilon n^{1/2}] = \mb{E}[\mb{P}[\on{dist}(M_i,H_i)\le\epsilon n^{1/2}|H_i]]\le\pi eKn^2\frac{\epsilon^2}{\delta^2}.\]

We explain the final inequality: $\on{dist}(M_i,H_i) \geq |\langle M_i, N_i\rangle|$, where $N_i$ is any unit vector orthogonal to $H_i$ (in particular, $N_i$ may be chosen measurably with respect to $H_i$). 
For such a measurable choice, $M_i$ and $N_i$ are independent after conditioning on $H_i$, and the random variable $\langle M_i, N_i \rangle$ may be written as $M_{i,1}e^{i\theta_1}|N_{i,1}| + \dots + M_{i,n}e^{i\theta_{n}}|N_{i,n}|$, where $|N_{i,1}|^{2} + \dots + |N_{i,n}|^{2} = 1$, and $M_{i,j}e^{i\theta_j}$ are independent random variables with densities bounded by $Kn\delta^{-2}$. Therefore, by \cref{lem:density-bound}, the density of $\langle M_i, N_i \rangle$ is at most $eKn\delta^{-2}$, so that $\mb{P}[|\langle M_i, N_i \rangle| \leq \epsilon n^{1/2}| H_i] \leq \pi e Kn^{2}\epsilon^{2}\delta^{-2}$. 

%The last inequality comes from the fact that $\on{dist}(M_i,H_i)$ is the magnitude of a linear combination of the coordinates of $M_i$, which are shifts of a random variable (in two dimensions) with density bounded by $Kn\delta^{-2}$. Integrating over a circle of radius $\epsilon n^{1/2}$ and 
Finally, summing over the $n$ events for $1\le i\le n$ gives the desired conclusion.
\end{proof}
We now deduce the analogue of \cite[Thm~1.5]{BKMS19} for general complex random variables $\xi$; the proof is identical modulo substituting \cref{lem:general-small-ball} for \cite[Lemma~3.3]{BKMS19}. We repeat it here in order to highlight several key difficulties in extending the proof to the real case.
\begin{lemma}\label{lem:complex-final}
Suppose $A\in\mb{C}^{n\times n}$ with $\snorm{A}\le 1$ and $\delta\in(0,1)$. Let  $\lambda_1,\ldots,\lambda_n$ be the (random) eigenvalues of $A+\delta G_n$, where $G_{n} = G_{n}(\xi)$. Then, for any open measurable set $\mc{B} \subseteq \mb{C}$, 
\[\mb{E}\sum_{\lambda_i\in\mc{B}}\kappa(\lambda_i)^2\le \frac{eKn^3}{\delta^2}\on{vol}(B).\]
Note that the $\lambda_i$ are distinct with probability $1$, so that the left hand side is indeed well-defined.
\end{lemma}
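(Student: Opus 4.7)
The plan is to combine the small-ball estimate for $\sigma_n$ (\cref{lem:general-small-ball}) with the asymptotic pseudospectral volume formula (\cref{lem:vol-limit}) in the same manner as in \cite{BKMS19}, substituting \cref{lem:general-small-ball} for its complex-Gaussian counterpart.

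First I would rewrite the pseudospectral volume as an integral. Using the characterization $\Lambda_\varepsilon(M) = \{z \in \mb{C} : \sigma_n(M - zI) \le \varepsilon\}$, Fubini's theorem gives
\[\mb{E}\on{vol}(\Lambda_\varepsilon(A + \delta G_n)\cap\mc{B}) = \int_{\mc{B}} \mb{P}\bigl[\sigma_n((A - zI) + \delta G_n)\le\varepsilon\bigr]\,dz.\]
Since \cref{lem:general-small-ball} is uniform in the deterministic shift (it holds for any fixed matrix in place of $A$), for each $z$ we may apply it to $A - zI$ and obtain the pointwise bound $\mb{P}[\sigma_n((A-zI)+\delta G_n)\le\varepsilon]\le\pi eKn^3\varepsilon^2/\delta^2$. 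Integrating over $\mc{B}$ yields
\[\mb{E}\on{vol}(\Lambda_\varepsilon(A+\delta G_n)\cap\mc{B}) \le \pi eKn^3\frac{\varepsilon^2}{\delta^2}\on{vol}(\mc{B}).\]

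Next I would divide by $\varepsilon^2$ and pass to the limit $\varepsilon\to 0$. By \cref{lem:general-small-ball} applied with $z$ fixed and $\varepsilon\to 0$, the matrix $A+\delta G_n$ has no repeated eigenvalues almost surely (in fact $\sigma_n(A + \delta G_n - zI) > 0$ a.s.\ for each $z$, and one concludes distinctness of the eigenvalues from the fact that $G_n$ has a bounded density, so the discriminant polynomial is nonzero a.s.). Consequently \cref{lem:vol-limit} applies almost surely, giving
\[\lim_{\varepsilon\to 0}\frac{\on{vol}(\Lambda_\varepsilon(A+\delta G_n)\cap\mc{B})}{\varepsilon^2} = \pi\sum_{\lambda_i\in\mc{B}}\kappa(\lambda_i)^2.\]
Applying Fatou's lemma then yields
\[\pi\,\mb{E}\sum_{\lambda_i\in\mc{B}}\kappa(\lambda_i)^2 \le \liminf_{\varepsilon\to 0}\frac{\mb{E}\on{vol}(\Lambda_\varepsilon(A+\delta G_n)\cap\mc{B})}{\varepsilon^2}\le\pi eKn^3\frac{\on{vol}(\mc{B})}{\delta^2},\]
and dividing by $\pi$ gives the claimed bound.

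The only genuinely delicate point is the interchange of limit and expectation: one cannot in general use dominated convergence because a uniform $\varepsilon$-independent dominator for $\on{vol}(\Lambda_\varepsilon)/\varepsilon^2$ is not immediate. The simplest fix is the Fatou step outlined above, which only needs that \cref{lem:vol-limit} holds almost surely (hence the a.s.\ distinctness of eigenvalues), together with the pointwise-in-$\varepsilon$ bound on the expected volume already established. Everything else is bookkeeping.
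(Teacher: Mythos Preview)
Your proof is correct and follows essentially the same route as the paper: bound $\mb{E}\on{vol}(\Lambda_\varepsilon(A+\delta G_n)\cap\mc{B})$ via Fubini and \cref{lem:general-small-ball}, then pass to the limit using \cref{lem:vol-limit} and Fatou. The paper presents this more tersely (writing the $\liminf$ inequality without naming Fatou or elaborating on a.s.\ distinctness), but the argument is the same.
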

\begin{proof}
For any measurable open set $\mc{B}\subseteq\mb{C}$ we have
\begin{align}\label{eq:circular-integral}
\begin{split}
\mb{E}\on{vol}(\Lambda_\varepsilon(A+\delta G_n)\cap\mc{B}) 
&= \mb{E}\int_{\mc{B}}\mbm{1}_{\{z\in\Lambda_\varepsilon(A+\delta G_n)\}}\,dz\\
&= \int_{\mc{B}}\mb{P}[z\in\Lambda_\varepsilon(A+\delta G_n)\,dz\\
&= \int_{\mc{B}}\mb{P}[\sigma_n(A+\delta G_n-zI)<\varepsilon]\,dz\\
&\le\pi eKn^3\frac{\varepsilon^2}{\delta^2}\on{vol}(\mc{B})
\end{split}
\end{align}
by \cref{lem:general-small-ball}.

Now \cref{lem:vol-limit} yields
\[\mb{E}\sum_{\lambda_i\in\mc{B}}\kappa(\lambda_i)^2\le\liminf_{\varepsilon\to 0}\frac{\mb{E}\on{vol}(\Lambda_\varepsilon(A+\delta G_n)\cap\mc{B})}{\pi\varepsilon^2}\le \frac{eKn^3}{\delta^2}\on{vol}(B).\qedhere\]
\end{proof}

Given \cref{lem:complex-final,lem:operator-tail} the deduction of \cref{thm:complex} follows exactly as the deduction of \cite[Theorem~1.1]{BKMS19} given \cite[Theorem~1.5, Lemma~2.2]{BKMS19}; in particular, \cref{lem:complex-final} is only needed when $\mc{B}$ is a disk of around constant radius. For the reader's convenience, we reproduce the details in \cref{app:details}. The various remarks regarding moment conditions following \cref{thm:complex} follow from standard versions of \cref{lem:operator-tail} under weaker moment assumptions %(for instance, it suffices to use the tail bound for the Frobenius norm of $G_{n}(\xi)$ obtained using Markov's inequality). 

\subsection{Difficulties for real perturbations}\label{sub:difficulties} 
In order to prove that real perturbations suffice, we must overcome significant technical difficulties. As mentioned in the introduction, the major issue is that the small ball estimate \cref{lem:general-small-ball} is in general weakened from an $\varepsilon^2$ rate to an $\varepsilon$ rate %(see e.g. \cite[Observation~2]{Tik17})
; morally, in the real case we can only study intervals of length $\varepsilon$, not disks of radius $\varepsilon$. However, since the method of \cite{BKMS19} looks at the limiting volume of the pseudospectrum $\Lambda_\varepsilon(A+\delta G_n)$, the $\varepsilon^2$ rate is required in a fundamental way, as we can see from the proof above. 

In order to fix this issue, we generalize results of Ge \cite{Ge17} to random matrices with an arbitrary mean profile -- as we will see, this provides a $\varepsilon^2$ bound, at the additional cost of a factor of $|\on{Im}z|^{-1}$ in \cref{eq:circular-integral}. Therefore, we will split the integral over the disc into two parts: within a strip of width $\varepsilon$ around the real axis, we use the bound with rate $\varepsilon$, and outside it, we use our generalization of Ge's \cite{Ge17} results.

However, this still leads to an additional factor of $\log(\varepsilon^{-1})$, which blows up if we take the limit $\varepsilon \to 0$ as in the proof of \cref{lem:complex-final} (which comes from \cref{lem:vol-limit}). We may avoid taking the limit by finding an effective value of $\varepsilon$, which still makes \cref{lem:vol-limit} hold; however, it turns out that such an effective value depends both on \emph{a priori} estimates on $\eta(A+\delta G_{n})$ (which can be obtained using \cref{thm:eigenvalue-gaps}) and more seriously, on $\kappa_{V}(A+\delta G_{n})$, which is precisely the quantity we are trying to control!  
%This introduces the log factor seen in \cref{thm:main}, after additionally choosing an effective value for $\epsilon$ rather than taking a limit as in the proof of \cref{lem:complex-final}. In order to obtain such an effective value of $\epsilon$, 

%one \emph{a priori} needs an initial estimate on $\kappa_V(A+\delta G_n)$; before this work such an estimate of any quality was 
As mentioned in the introduction, prior to our work, no such estimate seems to be known, not even when $G_{n}$ is replaced by a deterministic real matrix of bounded operator norm. 
%not known for real Gaussians - not even replacing $G_n$ by a deterministic real matrix of bounded operator norm. (In the deterministic complex case, Davies \cite{Dav07} has an argument which fundamentally relies on a theorem of Friedland \cite{Fri01} that crucially requires complex algebraic structure.)
We overcome this difficulty via a novel bootstrapping scheme which completely eliminates the need for any \emph{a priori} bound on $\kappa_{V}(A+\delta G_{n})$. %Finally, it is worth noting that our analysis of the pseudospectrum will require a good lower bound on $\eta(A+\delta G_n)$ that holds with large probability, which we derive from singular value estimates in a similar vein to Ge's \cite{Ge17} approach.

\section{Overview of Key Estimates}\label{sec:strategy}
For the remainder of the paper, we assume that $\xi$ is a real sub-Gaussian random variable with density bounded by $K$ and $G_n = G_n(\xi)$. 

Our proof of \cref{thm:main} relies on  three key estimates.\\

First, we require a bound on the lower tail of the smallest singular value of $G_n(\xi)$ which is invariant with respect to translations by arbitrary complex matrices.
\begin{proposition}\label{prop:unif-bound}
For any $A\in\mb{C}^{n\times n}$, we have
\[\mb{P}[\sigma_n(A+\delta G_n)\le\epsilon]\le 2\sqrt{2e}Kn^2\frac{\epsilon}{\delta}.\]
\end{proposition}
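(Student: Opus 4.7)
The plan is to follow the invertibility via distance approach used in the proof of \cref{lem:general-small-ball}, with the key modification that the final anti-concentration estimate must be the one-dimensional real analogue of \cref{lem:density-bound}, reflecting the fact that $\xi$ is real-valued.

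First I would write $M = A + \delta G_n$ with columns $M_1, \ldots, M_n$ and let $H_i = \on{span}_{\mb{C}}\{M_j : j \neq i\}$. The usual variational characterization of $\sigma_n$ together with a union bound (exactly as in the proof of \cref{lem:general-small-ball}) reduces the problem to bounding $\mb{P}[\on{dist}(M_i, H_i) \leq \epsilon \sqrt{n}]$ for each fixed $i$. For such an $i$, conditioning on all columns $\{M_j\}_{j \neq i}$ determines $H_i$ together with a measurable unit vector $N_i \in \mb{C}^n$ normal to it, while leaving $M_i$ independent. Since $\on{dist}(M_i, H_i) \geq |\langle M_i, N_i \rangle|$, the task becomes to bound the anti-concentration of the complex random variable $\langle M_i, N_i \rangle$ on a disc of radius $\epsilon \sqrt{n}$.

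Here is the step that diverges from the complex case treated in \cref{lem:general-small-ball}. I would decompose $N_i = b + ic$ with $b, c \in \mb{R}^n$ and $\snorm{b}_2^2 + \snorm{c}_2^2 = 1$, and assume without loss of generality that $\snorm{b}_2 \geq 1/\sqrt{2}$ (otherwise work with $\on{Im}\langle M_i, N_i\rangle$ instead, by the same argument). Then $|\langle M_i, N_i\rangle| \geq |\on{Re}\langle M_i, N_i\rangle|$, and conditionally on $H_i$ the random part of $\on{Re}\langle M_i, N_i\rangle$ is the one-dimensional real sum $(\delta/\sqrt{n}) \sum_{j=1}^n b_j \xi_j$, where the $\xi_j$ are i.i.d.\ copies of $\xi$ and hence have density bounded by $K$. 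Invoking a one-dimensional analogue of \cref{lem:density-bound} (with the sharp constant $\sqrt{e}$, also obtainable from \cite{MMX17}), applied to the normalized coefficients $b_j/\snorm{b}_2$, yields a density bound of $\sqrt{e}\,K\sqrt{n}/(\delta \snorm{b}_2) \leq \sqrt{2e}\,K\sqrt{n}/\delta$ for $\on{Re}\langle M_i, N_i\rangle$. Multiplying by the interval length $2\epsilon\sqrt{n}$ and summing over the $n$ columns then produces the claimed bound $2\sqrt{2e}\,Kn^2\epsilon/\delta$.

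The main obstacle is securing the sharp constant in this one-dimensional density estimate: the naive convolution/``freeze all but the largest coefficient'' argument only bounds the density of $\sum (b_j/\snorm{b}_2)\xi_j$ by $K/\max_j |b_j/\snorm{b}_2| \lesssim K\sqrt{n}$, which would degrade the final polynomial factor from $n^2$ to $n^{5/2}$ and break the downstream bootstrapping scheme. The constant $\sqrt{e}$ reflects the one-dimensional version of the maximum-entropy input already used in \cref{lem:density-bound}, and (together with the WLOG step losing $\sqrt{2}$ and the usual factor of $2$ from the interval length) is precisely what produces the $2\sqrt{2e}$ in the final constant.
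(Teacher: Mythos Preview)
Your proposal is correct and follows essentially the same route as the paper. The paper packages the anti-concentration step as a separate lemma (\cref{lem:universal-hyperplane}) and invokes the $\ell=1$ case of the Livshyts--Paouris--Pivovarov bound (\cref{lem:low-density}) rather than the one-dimensional analogue of \cref{lem:density-bound}, but the resulting density estimate $\sqrt{e}K/\snorm{b}_2$ and the ensuing arithmetic are identical to yours.
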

In the case when $A \in \mb{R}^{n\times n}$, this essentially appears in \cite{Rud13} and also as \cite[Observation~2]{Tik17}. We provide the proof for general ${A} \in \mb{C}^{n\times n}$ in \cref{sec:complex-bound}.\\

Next, we provide a generalization of the main singular value estimate in Ge \cite{Ge17} to the case when the entries of the random matrix have arbitrary means.
\begin{proposition}\label{prop:complex-sv}
Let $A\in\mb{R}^{n\times n}$ with $\snorm{A}\le 1$ and $\delta \in (0,1)$. Let $\mc{E}_{K'}$ denote the event that $\snorm{G_n}\le K'$, and let $z\in \mb{C}$ such that $|z|\le 3\delta K'+3$. Finally, let $M = A + \delta G_n-zI$. Then
\[\mb{P}[\mc{E}_{K'}\cap\sigma_n(M)\le\epsilon]\le c_{\ref{prop:complex-sv}}(1+\delta K')\frac{\max(K,K^2)n^3\epsilon^2}{\delta^2|\on{Im}z|}\]
where $c_{\ref{prop:complex-sv}} > 0$ is an absolute constant.
\end{proposition}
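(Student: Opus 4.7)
The plan is to adapt Ge's \cite{Ge17} strategy for controlling the two smallest singular values, accommodating the mean profile $A$ through the operator-norm control provided by $\mc{E}_{K'}$. Writing $z = a + ib$ and $B = A + \delta G_n - aI$, I would pass to the $2n\times 2n$ real block matrix
\[\widetilde M = \begin{pmatrix} B & bI \\ -bI & B \end{pmatrix},\]
the standard real representation of the complex linear map $M$. Its singular values are precisely those of $M$, each with multiplicity doubled, so $\sigma_{2n-1}(\widetilde M) = \sigma_{2n}(\widetilde M) = \sigma_n(M)$, and it suffices to bound $\mb{P}[\mc{E}_{K'} \cap \sigma_{2n-1}(\widetilde M) \le \epsilon]$. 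I would then apply the standard invertibility-via-distance reduction for the two smallest singular values from \cref{app:two-singular-values} to dominate this probability by a sum, over pairs $(i,j)$ of column indices, of probabilities that the columns $c_i, c_j$ of $\widetilde M$ both project within $\lesssim \sqrt n \epsilon$ onto the orthogonal complement of the span $H_{ij}$ of the other $2n-2$ columns; the operator-norm bound $\snorm{\widetilde M}\lesssim 1 + \delta K'$ afforded by $\mc{E}_{K'}$ is what ultimately contributes the factor $(1+\delta K')$ in the conclusion.

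For each pair, I would invoke a small-ball estimate, distinguishing ``paired'' pairs $(k, n+k)$ for $k\in[n]$ from the rest. The matrix $\widetilde M$ commutes with the real shadow $J$ of multiplication by $i$, namely the block rotation $(x^T, x^B)\mapsto(x^B, -x^T)$. Consequently, $\on{span}\{c_k, c_{n+k}\}$ is $J$-invariant, so $H_{k,n+k}^\perp$ is a 2D $J$-invariant subspace and admits an orthonormal basis of the form $\{v, Jv\}$ for some unit $v = (v^T, v^B)\in \mb{R}^n\times \mb{R}^n$. The pair-distance event for $(k,n+k)$ then collapses, via the identity $\sang{c_{n+k}, v} = \sang{c_k, Jv}$, to a single two-dimensional small-ball event of the form
\[(\delta\sang{g_k, v^T} - r_1)^2 + (\delta\sang{g_k, v^B} - r_2)^2 \lesssim n\epsilon^2\]
for deterministic $r_1, r_2$ depending on $B$, $b$, and the conditioning data, where $g_k$ denotes the $k$-th column of $G_n$. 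The two-dimensional version of \cref{lem:density-bound} bounds the density of $(\delta\sang{g_k, v^T}, \delta\sang{g_k, v^B})$ by $\lesssim K^2/(\delta^2 \snorm{v^T\wedge v^B})$, yielding small-ball probability $\lesssim K^2 n\epsilon^2/(\delta^2 \snorm{v^T\wedge v^B})$. For an unpaired pair $(i,j)$, the columns $c_i, c_j$ depend on distinct columns of $G_n$, so conditioning on the other columns decouples the two distance events; each gives a bound of rate $\lesssim K\sqrt n\epsilon/\delta$ via \cref{lem:density-bound}, and their product yields rate $\lesssim K^2 n\epsilon^2/\delta^2$ without any $\snorm{v^T\wedge v^B}^{-1}$ factor.

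The main obstacle is a geometric lemma showing $\snorm{v^T\wedge v^B}\gtrsim |b|/(1+\delta K')$ in the paired case, which is where the $1/|\on{Im} z|$ factor in the conclusion originates. Writing $\hat B$ for $B$ with its $k$-th column removed and $P$ for the projection killing the $k$-th coordinate, the condition $v\in H_{k,n+k}^\perp$ translates, after a short computation, to the coupled kernel equations $\hat B^T v^T = bPv^B$ and $\hat B^T v^B = -bPv^T$. Combined with the operator-norm bound $\snorm{\hat B}\lesssim 1+\delta K'$ on $\mc{E}_{K'}$, these equations force $v^T$ and $v^B$ to be quantitatively non-parallel whenever $b\neq 0$; optimizing the choice of $v$ within the 2D $J$-invariant normal subspace and tracking the dependence on $|b|$ should yield the desired lemma, which is the substantive analog of Ge's key estimate in the zero-mean setting. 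Summing the paired contribution $\lesssim K^2 n^2\epsilon^2(1+\delta K')/(\delta^2|b|)$ and the unpaired contribution $\lesssim K^2 n^3\epsilon^2/\delta^2$, and absorbing both into the form $\lesssim K^2 n^3\epsilon^2(1+\delta K')/(\delta^2|b|)$ using $|b|\le|z|\le 3(1+\delta K')$ from the hypothesis, yields the asserted estimate.
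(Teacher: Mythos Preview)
Your paired-pair analysis is essentially the paper's argument in real-block disguise: the paper works directly in $\mb{C}^n$, applying the single-column reduction \cref{lem:reduction} to $M$ (yielding $n$ terms $\mb{P}[\on{dist}_{\mb{C}}(M_k,H_k)\le\sqrt n\,\epsilon]$) and then bounding each via \cref{lem:two-dimensionality}. That lemma writes the complex unit normal as $v_k=x_k+iy_k$, shows on $\mc{E}_{K'}$ that every phase rotation of $v_k$ has imaginary part of norm at least $|\on{Im}z|/(16(1+\delta K'))$ (this is your geometric lemma), and then feeds $V=[x_k,y_k]^{\mr T}$ into the $\ell=2$ case of \cref{lem:low-density} to obtain the $\epsilon^2$ rate. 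In your language, $H_{k,n+k}\subset\mb{R}^{2n}$ is precisely the real shadow of the complex hyperplane $H_k\subset\mb{C}^n$, your $(v^T,v^B)$ is $(x_k,y_k)$ up to sign conventions, and $\snorm{v^T\wedge v^B}=\det(VV^{\mr T})^{1/2}$.

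The unpaired case, however, has a genuine gap. You claim that for $j\neq i+n$ the columns $c_i,c_j$ decouple after conditioning on the other $2n-2$ columns of $\widetilde M$, because they depend on distinct columns of $G_n$. But those other columns include $c_{n+i}$ (and the partner of $c_j$), and since $c_{n+i}=(be_i,\,A_i+\delta(G_n)_i-ae_i)$ already determines $(G_n)_i$, the conditioning renders $c_i=(A_i+\delta(G_n)_i-ae_i,\,-be_i)$ deterministic. There is no remaining randomness to which \cref{lem:density-bound} can be applied, and the rate-$\epsilon$ bound you assert does not hold conditionally; the conditional probability is simply $0$ or $1$.

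The unpaired terms are in fact artifacts of applying the generic double reduction \cref{lem:double-reduction} to $\widetilde M$ without exploiting its $J$-symmetry at the reduction stage. If $\sigma_n(M)\le\epsilon$ then some complex near-null unit vector $u$ has $|u_k|\ge n^{-1/2}$; in the block picture both $w=(\on{Re}u,\on{Im}u)$ and $Jw$ are near-null with their mass at the paired positions $(k,n+k)$, which forces $\on{dist}(c_k,H_{k,n+k})\le\sqrt n\,\epsilon$ directly, with no unpaired contribution. But this tailored reduction is exactly the paper's one-step application of \cref{lem:reduction} over $\mb{C}^n$, so once the gap is repaired your argument collapses to the paper's.
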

As in \cite{Ge17}, the crucial part of this bound is the rate $\epsilon^{2}$ (as opposed to just $\epsilon$). The proof of the above proposition requires some essential departures from Ge's proof, which requires strong control over the the ratio $\|M\|/\|G_{n}\|$ (in particular, the proof in \cite{Ge17} breaks down for $\delta = o(\exp(-O(n))$. We provide details in \cref{sec:complex-bound}.\\

From an analogue of the above proposition for the smallest two singular values, we derive the following more precise version of \cref{thm:eigenvalue-gaps} -- this is similar to main result in \cite{Ge17} (but again, generalized to an arbitrary mean profile). We prove it in \cref{app:spacing}.
\begin{proposition}\label{prop:spacing}
Let $A\in\mb{R}^{n\times n}$ with $\snorm{A}\le 1$, let $\delta \in (0,1)$, and let $M = A + \delta G_n$. Let $\mc{E}_{K'}$ denote the event that $\snorm{G_n}\le K'$. Then, for $s\le 1$,
\[\mb{P}[\mc{E}_{K'}\cap\eta(M)\le s]\le c_{\ref{prop:spacing}}\log_2(2(1+\delta K')/s)\cdot (1+\delta K')^2\max(K^2,K^3)n^5\frac{s}{\delta^3}\]
where $c_{\ref{prop:spacing}} > 0$ is an absolute constant.
\end{proposition}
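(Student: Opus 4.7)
The plan is to adapt the approach of Ge \cite{Ge17} to the uncentered setting, leveraging Proposition \ref{prop:complex-sv} and its analogue for the two smallest singular values. The argument has three pieces: a deterministic reduction from the eigenvalue gap to a product of the two smallest singular values of a shifted matrix; a net argument over possible midpoints of the close eigenvalue pair; and the singular-value estimates themselves.

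For the deterministic reduction, suppose $\eta(M)\le s$ and let $\lambda_i,\lambda_j$ be the eigenvalues realizing the gap, and set $z_0:=(\lambda_i+\lambda_j)/2$. On $\mc{E}_{K'}$ we have $|\lambda_k|\le\snorm{M}\le 1+\delta K'$ for every $k$, so $z_0\in\mc{D}(0,1+\delta K')$. Applying Weyl's multiplicative majorization $\prod_{\ell=k+1}^n\sigma_\ell(B)\le\prod_{\ell=k+1}^n|\mu_\ell(B)|$ (with eigenvalues ordered by decreasing modulus) to $B=M-z_0 I$ with $k=n-2$, the two eigenvalues of smallest modulus are $\lambda_i-z_0$ and $\lambda_j-z_0$, each of modulus at most $s/2$, so $\sigma_{n-1}(M-z_0 I)\sigma_n(M-z_0 I)\le s^2/4$.

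Next, cover $\mc{D}(0,1+\delta K')$ by a net $\mc{N}$ of spacing $s$ with $|\mc{N}|\lesssim((1+\delta K')/s)^2$. Since singular values are $1$-Lipschitz in additive shifts, the point $z\in\mc{N}$ nearest to $z_0$ satisfies $\sigma_{n-1}(M-zI)\sigma_n(M-zI)\le Cs^2$ for an absolute constant $C$. By a union bound it suffices to estimate $\mb{P}[\mc{E}_{K'}\cap\{\sigma_{n-1}(M-zI)\sigma_n(M-zI)\le Cs^2\}]$ for each $z\in\mc{N}$. For a fixed $z$, we decompose this event dyadically: for $k=0,1,\ldots,O(\log_2((1+\delta K')/s))$ consider the piece $\{\sigma_n(M-zI)\in(2^{-k-1}\sqrt{Cs^2},\,2^{-k}\sqrt{Cs^2}],\ \sigma_{n-1}(M-zI)\le 2^{k+1}\sqrt{Cs^2}\}$ (the upper truncation of $k$ follows from $\sigma_{n-1}\le\snorm{M-zI}\lesssim 1+\delta K'$). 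Each piece is bounded by combining the small-ball estimate from Proposition \ref{prop:complex-sv} on $\sigma_n(M-zI)$ with an analogous bound for $\sigma_{n-1}(M-zI)$, established in \cref{app:spacing} via the standard reduction of the two smallest singular values to a pair of hyperplane-distance problems (\cref{app:two-singular-values}) combined with the Ge trick. Summing the dyadic pieces produces the logarithmic factor $\log_2(2(1+\delta K')/s)$, and summing over the net -- with a separate treatment of the strip $|\on{Im}z|\le s$, where Proposition \ref{prop:unif-bound} replaces Proposition \ref{prop:complex-sv} since no imaginary-part regularization is available -- produces the remaining factor $(1+\delta K')^2 n^5/\delta^3$ after the usual optimization over the horizontal strips $|\on{Im}z|\in[2^j s,2^{j+1}s]$.

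The main obstacle is establishing the analogue of Proposition \ref{prop:complex-sv} for the two smallest singular values in the uncentered case. Ge's original argument exploits strong operator-norm control on the centered random matrix, which breaks down once $A$ is large relative to $\delta$; instead we reduce through the smallest-singular-value control of Proposition \ref{prop:unif-bound} (which is insensitive to $A$), and then upgrade via the Ge trick applied to the distance of the last remaining column to the span of the other columns after conditioning. This generalization is the main technical content deferred to \cref{app:spacing}.
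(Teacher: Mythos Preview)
Your overall strategy matches the paper's: reduce to two close eigenvalues near a net point, apply Weyl/log-majorization to control the bottom two singular values, decompose dyadically in the ratio $\sigma_{n-1}/\sigma_n$, and then combine a joint two-singular-value estimate (the paper's Lemma~\ref{lem:complex-sv-2}, obtained exactly as you describe from \cref{app:two-singular-values} plus the Ge trick) with the uniform bound near the real axis, followed by an optimization. Two details in your write-up do not work as stated.

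First, the Lipschitz transfer of the \emph{product} $\sigma_{n-1}\sigma_n$ from the midpoint $z_0$ to the net point $z$ is false: $1$-Lipschitzness of each $\sigma_k$ in $z$ only gives $\sigma_{n-1}(M-zI)\sigma_n(M-zI)\le(\sigma_{n-1}+s)(\sigma_n+s)$, and the cross term $s\cdot\sigma_{n-1}(M-z_0I)$ is not $O(s^2)$ in general (nothing prevents $\sigma_{n-1}(M-z_0I)\asymp 1+\delta K'$ while $\sigma_n$ is correspondingly tiny). The conclusion you want is still true, but for a different reason: both $\lambda_i,\lambda_j$ lie in $\mc{D}(z,O(s))$, so Weyl's majorization applied \emph{directly at $z$} (this is the paper's Lemma~\ref{lem:orthog}) gives $\sigma_{n-1}(M-zI)\sigma_n(M-zI)\le O(s^2)$ and, more usefully, the dyadic-ready form $\sigma_n\le s^2/t$, $\sigma_{n-1}\le 2t$ for some $t\in[s,2\snorm{M}]$.

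Second, your bookkeeping of the net sum does not deliver the claimed exponents. With the near-real strip fixed at height $s$ and the complex bound used on strips $|\on{Im}z|\in[2^j s,2^{j+1}s]$, the complex contribution sums to order $(1+\delta K')^3\max(K^2,K^4)n^6 s/\delta^4$, not $(1+\delta K')^2 n^5 s/\delta^3$; there is no free parameter left to ``optimize.'' The paper instead covers $\mc{D}(0,1+\delta K')$ by strips of uniform height $H$ (a free parameter), uses Lemma~\ref{lem:unif-bound-2} on the central strip and Lemma~\ref{lem:complex-sv-2} on the rest, and then chooses $H=(1+\delta K')\max(1,K)ns/\delta$ to balance the two contributions. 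With that choice the bound becomes the stated $(1+\delta K')^2\max(K^2,K^3)n^5 s/\delta^3$. Equivalently, in your dyadic-strip picture, you should use the uniform bound for all strips with $|\on{Im}z|\lesssim H$ (not just $\lesssim s$) and optimize $H$.
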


\section{Deduction of \texorpdfstring{\cref{thm:main}}{Theorem 1.1}}\label{sec:deduction}
In this section, we show how to prove \cref{thm:main}, given the estimates in the previous section.\\ 

We begin with an immediate corollary of \cref{prop:spacing}.
\begin{lemma}\label{lem:spacing}
Let $A\in\mb{R}^{n\times n}$ with $\norm{A}\le 1$, and let $\delta\in (0,1)$. Then 
\[\mb{P}[\eta(A+\delta G_n)\ge c_{\xi,\ref{lem:spacing}}n^{-6}\delta^4]\ge\frac{3}{4}\]
for some $c_{\xi,\ref{lem:spacing}} > 0$ depending only on the density bound $K$ and sub-Gaussian moment of $\xi$.
\end{lemma}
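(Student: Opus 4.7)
The plan is to deduce the lemma directly from \cref{prop:spacing} after using \cref{lem:operator-tail} to reduce to a good event on which $\snorm{G_n}$ is bounded by a constant. Since the lemma is billed as an ``immediate corollary,'' I do not anticipate any serious obstacle; the work lies purely in choosing the right parameters and checking that the logarithmic factor in \cref{prop:spacing} does no damage.

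First, I fix $K'$ to be an absolute constant depending only on the sub-Gaussian moment of $\xi$, chosen large enough that \cref{lem:operator-tail} gives $\mb{P}[\mc{E}_{K'}^{c}] = \mb{P}[\snorm{G_n} > K'] \leq 1/8$; $K' = K + O(1)$ suffices. Since $\delta \leq 1$, it follows that $(1+\delta K')^{2} \leq (1+K')^{2}$ is a constant depending only on $\xi$, and likewise $\max(K^{2},K^{3})$ is an $\xi$-dependent constant.

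Next I apply \cref{prop:spacing} at the target scale $s := c_{\xi,\ref{lem:spacing}}\, n^{-6}\delta^{4}$, where $c_{\xi,\ref{lem:spacing}}$ is a small positive constant (depending only on $K$ and the sub-Gaussian moment of $\xi$) to be fixed at the end; note $s \leq 1$ for $c_{\xi,\ref{lem:spacing}}$ small. Then $n^{5}\cdot s/\delta^{3} = c_{\xi,\ref{lem:spacing}}\,\delta/n$, while $\log_{2}(2(1+\delta K')/s) = O_{\xi}\bigl(\log(1/c_{\xi,\ref{lem:spacing}}) + \log(n/\delta)\bigr)$, so the right-hand side of \cref{prop:spacing} is bounded by
\[
O_{\xi}\!\Bigl(c_{\xi,\ref{lem:spacing}}\bigl(\log(1/c_{\xi,\ref{lem:spacing}}) + \log(n/\delta)\bigr)\cdot \tfrac{\delta}{n}\Bigr).
\]
Since $x\log(1/x)$ is bounded by $1/e$ on $(0,1]$, we have $(\delta/n)\log(n/\delta) = O(1)$, so the above simplifies to $O_{\xi}\bigl(c_{\xi,\ref{lem:spacing}}(1 + \log(1/c_{\xi,\ref{lem:spacing}}))\bigr)$, which tends to zero as $c_{\xi,\ref{lem:spacing}} \to 0$. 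I therefore choose $c_{\xi,\ref{lem:spacing}}$ small enough that this quantity is at most $1/8$.

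A union bound then gives $\mb{P}[\eta(A+\delta G_n) \leq c_{\xi,\ref{lem:spacing}}\, n^{-6}\delta^{4}] \leq 1/8 + 1/8 = 1/4$, which is the complement of the claimed event. The only mildly subtle points are (i) that $K'$ can be taken independent of $n$ and $\delta$, handled by \cref{lem:operator-tail}, and (ii) that the $\log_{2}(2(1+\delta K')/s)$ factor from \cref{prop:spacing} is absorbed thanks to the extra decay of $\delta/n$ combined with $x\log(1/x) = O(1)$ on $(0,1]$; both are entirely routine.
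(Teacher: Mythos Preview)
Your proposal is correct and follows essentially the same approach as the paper: choose $K'$ via \cref{lem:operator-tail} so that $\mb{P}[\mc{E}_{K'}^{c}]\le 1/8$, apply \cref{prop:spacing} at scale $s=c_{\xi}n^{-6}\delta^{4}$ with $c_{\xi}$ small enough to make the resulting probability at most $1/8$, and take a union bound. The paper simply asserts that ``for an appropriate choice of $c_{\xi,\ref{lem:spacing}}$'' the bound is at most $1/8$; you have supplied the extra detail of how the $\log_{2}(2(1+\delta K')/s)$ factor is absorbed by the spare $\delta/n$ via $x\log(1/x)=O(1)$, which is exactly the content of the paper's remark that $n^{-6}\delta^{4}$ could be improved to $n^{-5}\delta^{3}/\log(n\delta^{-1})$.
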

\begin{remark}
The $n^{-6}\delta^4$ in the above can be replaced by $n^{-5}\delta^3/\log(n\delta^{-1})$.
\end{remark}
\begin{proof}
Let $\mc{E}_{K'} = \{\snorm{G_n}\le K'\}$ and choose $K'$ large enough (depending on the sub-Gaussian moment of $\xi$) so that
\[\mb{P}[\mc{E}_{K'}]\ge 7/8.\]
This can be done by using \cref{lem:operator-tail}. %or \cite[Theorem~4.6.1]{Ver18}.

Then, for an appropriate choice of $c_{\xi,\ref{lem:spacing}}$, we obtain
\[\mb{P}[\mc{E}_{K'}\cap\eta(A+\delta G_n)\le c_{\xi}n^{-6}\delta^4]\le 1/8\]
by \cref{prop:spacing}. The result follows by the union bound.
\end{proof}

We are now ready to prove our main technical result, which is an analogue of \cref{thm:complex} for real perturbations.
\begin{theorem}\label{thm:real}
Let $\xi$ be a sub-Gaussian real random variable with bounded density. Given $A\in\mb{R}^{n\times n}$ with $\snorm{A} \le 1$ and $\delta\in(0,1)$, there are $C_1,C_2$ depending only on $\xi$ such that
\[\mb{P}[\{\kappa_V(A+\delta G_n(\xi))\le C_1 n^{2}\delta^{-1}\sqrt{\log(n\delta^{-1})}\}\cap\{\snorm{\delta G_n}\le C_2 \delta\}]\ge 1/2.\]
\end{theorem}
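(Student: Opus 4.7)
The plan is to adapt the pseudospectral volume computation of \cref{lem:complex-final} to real perturbations, with two essential modifications dictated by the weaker real small-ball estimate \cref{prop:unif-bound}. For the upper bound on $\mb{E}[\mathrm{vol}(\Lambda_\varepsilon(M))]$, I split the disc of integration along the imaginary axis: inside the strip $|\mathrm{Im}\,z|\le\varepsilon$, I apply \cref{prop:unif-bound} (rate $\varepsilon$, but the strip has two-dimensional area $O(\varepsilon)$, so the contribution is $O(\varepsilon^2)$); outside the strip, I apply the Ge-type bound \cref{prop:complex-sv}, which gives the desired $\varepsilon^2$ rate at the cost of a $|\mathrm{Im}\,z|^{-1}$ factor, integrating to an $O(\log\varepsilon^{-1})$ overhead. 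For the lower bound, rather than the limiting identity \cref{lem:vol-limit}, I will use the effective bound \cref{lem:vol-bound}; the catch is that \cref{lem:vol-bound} requires an \emph{a priori} upper bound on $\kappa_2(M)$, which is exactly what I am trying to control. This circularity is broken by a bootstrap.

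Setting up the good event: with $K'$ chosen via \cref{lem:operator-tail} so that $\mb{P}[\snorm{G_n}\le K']\ge 7/8$, and applying \cref{lem:spacing} for the eigenvalue gap, define $\mathcal{E}=\{\snorm{G_n}\le K'\}\cap\{\eta(M)\ge c_{\xi,\ref{lem:spacing}}n^{-6}\delta^4\}$, so that $\mb{P}[\mathcal{E}]\ge 3/4$, where $M=A+\delta G_n$. On $\mathcal{E}$, the eigenvalues of $M$ are distinct, $\snorm{M}\le 1+\delta K'=O_\xi(1)$, and the Lagrange-interpolation formula $P_i=\prod_{j\ne i}(M-\lambda_j I)/\prod_{j\ne i}(\lambda_i-\lambda_j)$ for spectral projectors gives the crude deterministic bound $\kappa(\lambda_i)\le (2\snorm{M})^{n-1}/\eta(M)^{n-1}$, hence $\kappa_2(M)\le B_0$ for some $B_0$ with $\log B_0=O_\xi(n\log(n\delta^{-1}))$. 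The required operator-norm bound $\snorm{\delta G_n}\le\delta K'$ already holds on $\mathcal{E}$.

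Combining the two small-ball bounds as described, and noting that the disc $|z|\le 2(1+\delta K')$ lies in the admissible region $|z|\le 3\delta K'+3$ of \cref{prop:complex-sv}, yields the master estimate
\[\mb{E}\bigl[\mathrm{vol}(\Lambda_\varepsilon(M)\cap\mc{D}(0,2(1+\delta K')))\one_{\mathcal{E}}\bigr]\le C_\xi\frac{n^3\varepsilon^2\log(\varepsilon^{-1})}{\delta^2}\]
for all $\varepsilon\in(0,1)$. The bootstrap step is as follows: suppose $\mathcal{F}_B:=\mathcal{E}\cap\{\kappa_2(M)\le B\}$ has probability $\ge p_B$. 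With $\varepsilon_B:=c_{\xi,\ref{lem:spacing}}n^{-7}\delta^4/(2B)$, one checks that $\varepsilon_B\le\eta(M)/(2n\kappa_2(M))$ on $\mathcal{F}_B$, so \cref{lem:vol-bound} gives $\mathrm{vol}(\Lambda_{\varepsilon_B}(M)\cap\mc{D}(0,2\snorm{M}))\ge(\pi/8)\kappa_2(M)^2\varepsilon_B^2$; combining with the master estimate and Markov's inequality yields $\mb{P}[\mathcal{F}_{B'}]\ge p_B-1/8$ for $B':=C'_\xi n^{3/2}\sqrt{\log(nB\delta^{-1})}/\delta$.

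Starting from $B_0$ with $\log B_0=O_\xi(n\log(n\delta^{-1}))$ and $p_{B_0}\ge 3/4$, a single bootstrap produces $B_1=O_\xi(n^2\sqrt{\log(n\delta^{-1})}/\delta)$; a second iteration collapses the remaining logarithm and yields $B_2=O_\xi(n^{3/2}\sqrt{\log(n\delta^{-1})}/\delta)$, all on an event of probability $\ge 3/4-2\cdot(1/8)=1/2$. By the remark following \cref{lem:kappaV-upper}, $\kappa_V(M)\le\sqrt{n}\,\kappa_2(M)\le\sqrt{n}\,B_2=O_\xi(n^2\sqrt{\log(n\delta^{-1})}/\delta)$, completing the proof. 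The main obstacle is precisely the circularity at the heart of the bootstrap --- we cannot invoke \cref{lem:vol-bound} without a prior handle on $\kappa_2$, yet the entire point is to estimate $\kappa_2$ --- and the key feature enabling its resolution is that the master estimate depends on $B$ only through $\log B$, so the iteration terminates after just two rounds.
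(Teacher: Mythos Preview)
Your argument is correct and follows the same bootstrap-via-pseudospectral-volume scheme as the paper: combine \cref{lem:vol-bound} with the volume estimate obtained by splitting the disc into a thin strip (handled by \cref{prop:unif-bound}) and its complement (handled by \cref{prop:complex-sv}), so that an \emph{a priori} bound $\kappa_2\le B$ yields $\kappa_2\lesssim n^{3/2}\delta^{-1}\sqrt{\log(nB\delta^{-1})}$ with controlled probability loss.

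The only substantive difference is in how the iteration is organized. You initialize the bootstrap with an \emph{explicit} deterministic bound $B_0$, valid on all of $\mc{E}$, coming from the Lagrange interpolation formula $P_i=\prod_{j\ne i}(M-\lambda_jI)/(\lambda_i-\lambda_j)$ together with the eigenvalue-gap lower bound on $\mc{E}$; since $\log B_0=O_\xi(n\log(n\delta^{-1}))$, two rounds suffice, each costing a flat $1/8$ in probability. The paper instead takes any $t_0$ with $\mb{P}[\kappa_2\le t_0\mid\mc{E}]\ge 99/100$ (no explicit formula needed) and runs an unbounded number of iterations, showing that the probability loss at step $i$ is $O_\xi(t_{i+1}^{-1})$ and that the $t_i$ decay geometrically, so the product $\prod_i(1-C_\xi't_i^{-1})$ stays bounded below. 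Your version is shorter and more concrete; the paper's version never needs to write down a crude initial bound on $\kappa_2$ and would still work if the logarithmic dependence in the master estimate were somewhat worse.
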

\begin{proof}
%Let $\lambda_1,\ldots,\lambda_n$ be the eigenvalues of $A+\delta G_n$, where $n^{1/2}G_n$ is, as before, a matrix with all entries i.i.d. drawn from some real random variable $\xi$ satisfying the earlier conditions. Note that the $\lambda_i$ are distinct with probability $1$.
For an appropriate constant $K'$ depending only on $\xi$, we have by \cref{lem:operator-tail} that
\[\mb{P}[\snorm{G_n}\ge K']\le 1/12.\]
Hence, by \cref{lem:spacing} and the union bound, we have that the event
\[\mc{E} = \{\eta(A+\delta G_n)\ge c_{\xi,\ref{lem:spacing}} n^{-6}\delta^4\}\cap\{\snorm{G_n}\le K'\}\]
has probability at least $2/3$. 
%Again, we can achieve this by using \cref{lem:operator-tail}, or \cite[Theorem~4.6.1]{Ver18}. 
%Also,
%\[\{\snorm{A+\delta G_n}\ge 1+\delta K'\}\subseteq\{\snorm{G_n}\ge K'\}.\]

Define
\[p_t = \mb{P}[\kappa_2(A+\delta G_n)\le t|\mc{E}]\]
(recall that $\kappa_2^2 = \sum_{i=1}^n\kappa(\lambda_i)^2$) and 
\[\mc{E}_t = \mc{E}\cap\{\kappa_2\le t\}.\]
%fix any real $t_0\ge 1$ with $p_{t_0}\ge 9/10$.

Fix any real $t\geq 1$ with $p_t \geq 9/10$. We will show that for any such $t$, conditioned on the event $\mc{E}_{t}$, the expectation of $\kappa_{2}^{2}$ is significantly smaller than $t$ (in fact, the dependence on $t$ is logarithmic). By Markov's inequality, we can then conclude that $p_{f(t)}$ holds with probability at least $p_{t}(1-O(f(t)^{-1}))$, where $f(t) = \delta^{-2}\log{t}$ (up to logarithmic factors, and factors of $n$; see the precise definition below). In particular, this will allow us to show that starting from any (arbitrarily large) $t_0$ for which $p_{t_0} \geq 99/100$, we can keep iterating this process to find some $t = O(\delta^{-2})$ (suppressing log factors, and factors of $n$) such that $p_t \geq 9/10$, which suffices for our application.\\

To accomplish this, we begin by applying \cref{lem:vol-bound} to $M = A + \delta G_n$ (conditioned on $\mc{E}_{t}$) with $\varepsilon = \min(1,c_{\xi,\ref{lem:spacing}})n^{-6}\delta^4/(2nt)$. Note that conditioned on $\mc{E}_{t}$, we indeed have that
$$0 < \varepsilon \leq \frac{\eta(M)}{2n\kappa_{2}(M)},$$
so that the application of \cref{lem:vol-bound} is valid. Then, 
%(which has $\snorm{M}\le 1+\delta K'$ conditioned on $\mc{E}$), we find for , which conditional on $\mc{E}_t$ satisfies $0<\epsilon\le\eta(M)/(2n\kappa_2)$, we have
\begin{align*}
\mb{E}[\kappa_2^2|\mc{E}_t]&\le\frac{8}{\pi\varepsilon^2}\mb{E}[\on{vol}(\Lambda_\varepsilon(A+\delta G_n)\cap\mc{D}(0,2\snorm{M}))|\mc{E}_t]\\
&=\frac{8}{\pi\varepsilon^2}\mb{E}\left[\int_{\mc{D}(0,2\snorm{M})}\mbm{1}_{\{z\in\Lambda_\varepsilon(A+\delta G_n)\}}\,dz\bigg|\mc{E}_t\right]\\
&\le\frac{8}{\pi\varepsilon^2}\mb{E}\left[\int_{\mc{D}(0,2+2\delta K')}\mbm{1}_{\{z\in\Lambda_\varepsilon(A+\delta G_n)\}}\,dz\bigg|\mc{E}_t\right]\\
&=\frac{8}{\pi\varepsilon^2}\int_{\mc{D}(0,2+2\delta K')}\mb{E}[\mbm{1}_{\{z\in\Lambda_\varepsilon(A+\delta G_n)\}}|\mc{E}_t]\,dz\\
&=\frac{8}{\pi\varepsilon^2}\int_{\mc{D}(0,2+2\delta K')}\mb{P}[\sigma_n(A+\delta G_n-zI)\le\varepsilon|\mc{E}_t]\,dz\\
&\le\frac{8}{\pi\varepsilon^2}\int_{\mc{D}(0,2+2\delta K')}\min\left(2\sqrt{2e}Kn^2\frac{5\varepsilon}{3\delta},c_{\ref{prop:complex-sv}}(1+\delta K')\frac{5\max(K,K^2)n^3\varepsilon^2}{3\delta^2|\on{Im}z|}\right)\,dz.
\end{align*}
In the last line we used \cref{prop:unif-bound,prop:complex-sv}, along with $\mb{P}[\mc{E}_t] = \mb{P}[\mc{E}_t|\mc{E}]\mb{P}[\mc{E}]\ge (9/10) \cdot (2/3) = 3/5$.\\ 

Now splitting the disk at $|\on{Im}z| = (1+\delta K')\max(1,K)\varepsilon\delta^{-1}n$ and integrating, we obtain
\[\mb{E}[\kappa_2^2|\mc{E}_t]\lesssim_{\xi} (1+\delta K')^2\max(K,K^2)n^3\delta^{-2} + (1+\delta K')^2\max(K,K^2)n^3\delta^{-2}\log(\delta/(n\varepsilon)).\]
Note that, by our choice of $\varepsilon$, the first term is bounded by the second (up to an absolute constant factor). Thus
\begin{align}
\mb{E}[\kappa_2^2|\mc{E}_t]&\lesssim_{\xi}(1+\delta K')^2\max(K,K^2)n^3\delta^{-2}\log(\delta/(n\varepsilon))\notag\\
&\lesssim_\xi(1+\delta K')^2\max(K,K^2)n^3\delta^{-2}\log(tn/\delta).\label{eq:key-L2-iterator}
\end{align}

By Markov's inequality, setting $f(t) = n^3\delta^{-2}\log(tn/\delta)$, 
\[\mb{P}[\kappa_2\ge f(t)|\mc{E}_t]\lesssim C_\xi(1+\delta K')^2\max(K,K^2)f(t)^{-1}.\]
Thus,
\[p_{f(t)} = \mb{P}[\kappa_2 \leq f(t) | \mc{E}_{t}]\mb{P}[\mc{E}_t]\ge p_t(1-C_\xi'f(t)^{-1}),\]
for $C_\xi' = C_\xi(1+\delta K')^2\max(K,K^2)$ (which is of constant order since $\delta\in(0,1)$).\\

Choose $t_0$ such that $p_{t_0}\ge 99/100$, and let $t_{i+1}=f(t_i)$ as long as
\[t_i\ge 8n^3\delta^{-2}\log(1+n^4\delta^{-3})\text{ and } f(t_i)\ge 2^{16}C_\xi';\]
otherwise terminate. In particular, if \begin{equation}\label{eq:t-bound}
t_i\ge 8n^3\delta^{-2}\log(1+n^4\delta^{-3})+\exp(2^{16}C_\xi'),
\end{equation}
then the process continues for at least one more step.
\\ 

We claim that as long as the process does not terminate,
\[t_i\ge 2n^3\delta^{-2}\log(t_in/\delta) = 2t_{i+1};\]
in particular, the process terminates in finite time. To see the claimed inequality, note that the function
$g(x) = x - 2n^3\delta^{-2}\log(xn/\delta)$ is positive at $x = 8n^3\delta^{-2}\log(1+n^4\delta^{-3})$, and has derivative $g'(x) = 1 - 2n^2\delta^{-1}x^{-1}$, which is also positive for $x\ge 8n^3\delta^{-2}\log(1+n^4\delta^{-3})$.\\%, and $g(x)$ is positive at $x = 8n^3\delta^{-2}\log(1+n^4\delta^{-3})$, since the necessary inequality reduces to
%\[(1+n^4\delta^{-3})^4\ge 8n^4\delta^{-3}\log(1+n^4\delta^{-3}),\]
%which is true as $n^4\delta^{-3}\ge 1$.

%Therefore $t_{i+1}\le t_i/2$ as long as the process does not terminate, hence it will terminate in finite time. 
Next, we claim that at every step before termination,  $p_{t_i} \geq 9/10$. 
Indeed, we know from the above discussion that if $p_{t_i}\ge 9/10$, then
\[p_{t_{i+1}} = p_{f(t_i)}\ge p_{t_i}(1-C_\xi't_{i+1}^{-1}).\]
Hence, if the process continues for another step, then
\[p_{t_{i+1}}\ge\frac{99}{100}\prod_{j=1}^{i+1}(1-C_\xi't_j^{-1})\ge\frac{9}{10},\]
so that the lower bound on the probability is maintained throughout the process.
The last inequality follows from
\[t_j\ge 2t_{j+1}\]
for $0\le j\le i$ and $t_{i+1}\ge 2^{16}C_\xi'$, which together imply
\[\prod_{j=1}^{i+1}(1-C_\xi't_j^{-1})\ge\prod_{j=0}^\infty(1-2^{-j-16})\ge\frac{10}{11}.\]

This, together with \cref{eq:t-bound}, implies that there exists some
$t\leq 8n^3\delta^{-2}\log(1+n^4\delta^{-3})+\exp(2^{16}C_\xi')$ such that
%Therefore there is
%such that $p_t\ge 9/10$ (since we easily see that termination does not occur above this threshold). Thus 
\cref{eq:key-L2-iterator} holds for this value $t$.\\

Using \cref{lem:kappaV-upper} for such a choice of $t$, we obtain
\[\mb{E}[\kappa_V(A+\delta G_n)^2|\mc{E}_t]\lesssim_\xi(1+\delta K')^2\max(K,K^2)n^4\delta^{-2}\log(tn/\delta).\]
Another application of Markov's inequality gives
\[\mb{P}[\kappa_V(A+\delta G_n)\ge C_\xi''(1+\delta K')\max(K^{1/2},K)n^{2}\delta^{-1}\sqrt{\log(tn/\delta)}|\mc{E}_t]\le 1/6\]
for sufficiently large $C_\xi''$. Since $\mb{P}[\mc{E}_t]\ge 3/5$, we conclude that
\[\mb{P}[\snorm{G_n}\le K'\cap\kappa_V(A+\delta G_n)\le\sqrt{6}(1+\delta K')\max(K^{1/2},K)n^{2}\delta^{-1}\sqrt{\log(tn/\delta)}]\ge 1/2.\]
The result now follows from substituting the upper bound on $t$ corresponding to \cref{eq:t-bound}.
\end{proof}

\begin{proof}[Proof of \cref{thm:main}]
This follows immediately from \cref{thm:real}. %(for instance, by specializing $\xi$ to be a real standard Gaussian and rescaling $\delta$ appropriately).
\end{proof}

%\section{Singular Values from Hyperplane Distance Estimates}\label{sec:reduction}
%We now include a (by now) standard reduction of singular value estimates to hyperplane distance estimates. The precise version we require is not stated in the literature but can be derived easily from an argument presented in \cite{Rud13}. 
\section{Proof of \texorpdfstring{\cref{prop:unif-bound,prop:complex-sv}}{Propositions 4.1 and 4.2}}\label{sec:complex-bound}
%The goal of this section is to establish \cref{prop:unif-bound,prop:complex-sv}. 

\subsection{Invertibility via distance}\label{sub:invertibility-via-distance}
We begin with a (by now) standard reduction of the singular value estimates to estimates on the distance of a random vector to an appropriate subspace. 

\begin{lemma}\label{lem:reduction}
Let $M$ be any $n\times n$ random matrix and fix any $\epsilon \ge 0$. Let $M_i$ be the $i^{th}$ column vector of $M$ and $H_i = \on{span}\{M_j: j\neq i\}$. Then for any event $\mc{E}$,
\[\mb{P}[\sigma_n(M)\le \epsilon\cap \mc{E}]\le \sum_{i=1}^{n}\mb{P}[\on{dist}(M_i,H_i)\le \epsilon n^{1/2}\cap \mc{E}].\]
\end{lemma}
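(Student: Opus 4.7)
The plan is to use the variational characterization $\sigma_n(M) = \inf_{x \in \mb{S}^{n-1}} \snorm{Mx}_2$ together with a pigeonhole argument on the coordinates of the minimizing unit vector, then apply a union bound over the $n$ columns. The whole argument is purely deterministic except for the final union bound, so the event $\mc{E}$ comes along for the ride trivially.

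More concretely, I would first record the geometric identity that if $x = (x_1, \ldots, x_n)^\top$, then $Mx = \sum_{j=1}^n x_j M_j$, and the contributions from $j \neq i$ all lie in $H_i = \on{span}\{M_j : j \neq i\}$. Hence projecting $Mx$ onto $H_i^\perp$ kills every term but $x_i M_i$, giving $\on{dist}(Mx, H_i) = |x_i|\,\on{dist}(M_i, H_i)$. Since $\snorm{Mx}_2 \ge \on{dist}(Mx, H_i)$ for any $i$, this yields
\[
\snorm{Mx}_2 \;\ge\; |x_i|\,\on{dist}(M_i, H_i)
\]
for every $i$ and every $x$.

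Next, I would observe that for any $x \in \mb{S}^{n-1}$ there exists at least one index $i = i(x) \in [n]$ with $|x_i| \ge n^{-1/2}$, simply because $\sum_j |x_j|^2 = 1$. Combining this with the previous display shows
\[
\snorm{Mx}_2 \;\ge\; n^{-1/2}\,\on{dist}(M_{i(x)}, H_{i(x)}) \;\ge\; n^{-1/2}\min_{1\le i\le n}\on{dist}(M_i, H_i).
\]
Taking the infimum over $x \in \mb{S}^{n-1}$ gives $\sigma_n(M) \ge n^{-1/2}\min_i \on{dist}(M_i, H_i)$ as a deterministic inequality. Therefore the event $\{\sigma_n(M) \le \epsilon\}$ is contained in $\bigcup_{i=1}^n \{\on{dist}(M_i, H_i) \le \epsilon n^{1/2}\}$.

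Finally, intersecting both sides with $\mc{E}$ and applying the union bound yields the claimed inequality. There is no real obstacle here: the only subtle point is correctly identifying that projecting $Mx$ onto $H_i^\perp$ isolates the $i$-th column (since $H_i$ already contains the other $M_j$), which is what makes the deterministic reduction clean. The inclusion of the auxiliary event $\mc{E}$ requires no modification to the argument because the containment of events is deterministic.
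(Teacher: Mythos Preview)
Your proposal is correct and follows essentially the same approach as the paper, which simply refers back to the first half of the proof of \cref{lem:general-small-ball} (the variational characterization of $\sigma_n$, the pigeonhole on coordinates of a unit vector, and the union bound), noting that the auxiliary event $\mc{E}$ is carried along by the deterministic containment. Your write-up in fact spells out more detail than the paper does.
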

\begin{proof}
The argument is identical to the first half of the proof of \cref{lem:general-small-ball}, but intersected with an arbitrary event.
\end{proof}
\begin{lemma}\label{lem:double-reduction}
Let $M$ be any $n\times n$ random matrix and fix any $\epsilon_2\ge\epsilon_1\ge 0$. Let $M_i, H_i$ be as in \cref{lem:reduction} and let $H_{i,j} = \on{span}\{M_k: k\neq i,j\}$. Then for any event $\mc{E}$,
\[\mb{P}[\sigma_n(M)\le\epsilon_1\cap\sigma_{n-1}(M)\le\epsilon_2\cap \mc{E}]\le \sum_{i=1}^{n}\sum_{j\neq i}\mb{P}[\on{dist}(M_i,H_i)\le \epsilon_1 n^{1/2}\cap \on{dist}(M_j,H_{i,j})\le \epsilon_2 n^{1/2}\cap \mc{E}].\]
\end{lemma}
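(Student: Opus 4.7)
The plan is to mimic the single-column argument behind \cref{lem:reduction}, but process two columns simultaneously, exploiting the joint structure of the two smallest singular values via the SVD. Let $v_n,v_{n-1}\in\mb{C}^n$ be orthonormal right singular vectors of $M$ corresponding to $\sigma_n(M)$ and $\sigma_{n-1}(M)$, and set $V=\on{span}(v_n,v_{n-1})$. Writing $w=\alpha v_n+\beta v_{n-1}\in V$ with $|\alpha|^2+|\beta|^2=1$, the SVD identity
\[\snorm{Mw}^2 = |\alpha|^2\sigma_n(M)^2+|\beta|^2\sigma_{n-1}(M)^2\]
gives $\snorm{Mw}\le\sigma_{n-1}(M)\le\epsilon_2$ for every unit vector $w\in V$; of course $\snorm{Mv_n}=\sigma_n(M)\le\epsilon_1$ as well. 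This is the key observation, ensuring that the second vector we eventually produce still has small image under $M$.

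Assume now that $\{\sigma_n(M)\le\epsilon_1\}\cap\{\sigma_{n-1}(M)\le\epsilon_2\}\cap\mc{E}$ holds. By pigeonhole there exists $i$ with $|(v_n)_i|\ge n^{-1/2}$; expanding $Mv_n=\sum_k (v_n)_k M_k$ and projecting onto $H_i^\perp$ (which annihilates every $M_k$ with $k\neq i$) yields
\[\on{dist}(M_i,H_i)\le \snorm{Mv_n}/|(v_n)_i|\le\epsilon_1 n^{1/2}.\]
To produce the second vector, set
\[w:=(v_{n-1})_i\, v_n - (v_n)_i\, v_{n-1},\qquad \hat{w}:=w/\snorm{w},\]
which is well-defined because $(v_n)_i\neq 0$ and $v_n,v_{n-1}$ are linearly independent, so $w\neq 0$. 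Then $\hat{w}\in V$ is a unit vector with $\hat{w}_i=0$, hence $\snorm{M\hat{w}}\le\epsilon_2$ by the SVD bound above. A second pigeonhole on the remaining $n-1$ coordinates gives some $j\neq i$ with $|\hat{w}_j|\ge(n-1)^{-1/2}\ge n^{-1/2}$, and expanding $M\hat{w}=\sum_{k\neq i}\hat{w}_k M_k$ and projecting onto $H_{i,j}^\perp$ gives
\[\on{dist}(M_j,H_{i,j})\le\snorm{M\hat{w}}/|\hat{w}_j|\le\epsilon_2 n^{1/2}.\]
A union bound over the $n(n-1)$ ordered pairs $(i,j)$ then completes the proof.

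The only conceptual subtlety is producing $\hat{w}$ with $\hat{w}_i=0$ while maintaining a good bound on $\snorm{M\hat{w}}$: a naive elimination like $v_{n-1}-((v_{n-1})_i/(v_n)_i)v_n$ loses a factor depending on the ratio $(v_{n-1})_i/(v_n)_i$, whereas the symmetric combination above lies in $V$ and is a unit vector after normalization, so the SVD identity yields the clean bound $\snorm{M\hat{w}}\le\sigma_{n-1}(M)$ regardless of the coordinate ratios. Beyond this, the proof is essentially a bookkeeping double-invocation of \cref{lem:reduction}, and I do not anticipate any further obstacle.
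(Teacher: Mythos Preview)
The proposal is correct and takes essentially the same approach as the paper. The paper factors the argument into two sequential applications of the single-column reduction (first \cref{lem:reduction}, then a separate lemma for $\sigma_{n-1}$ with a fixed excluded index $\ell=i$), whereas you do both steps in one pass with an explicit construction of $\hat{w}$; the underlying geometric idea---use the two-dimensional singular subspace $V=\on{span}(v_n,v_{n-1})$ to produce a unit vector with a prescribed zero coordinate---is identical.
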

The proof of \cref{lem:double-reduction} is deferred to \cref{app:two-singular-values}; this also completely standard.

\subsection{Proof of \texorpdfstring{\cref{prop:unif-bound}}{Proposition 4.1}}
The proof of \cref{prop:unif-bound} is quite simple, given the following lemma 
of Livshyts, Paouris, and Pivovarov \cite{LPP16}. %which in turns sharpens a result of Rudelson and Vershynin \cite{RV15}.
\begin{lemma}[{From \cite[Theorem~1.1]{LPP16}}]\label{lem:low-density}
Consider independent $\mb{R}$-valued continuous random variables $X_1,\ldots,X_n$ with densities bounded by $K$ and let $X= (X_1,\ldots,X_n)$. Fix an integer $\ell\in \{1,2\}$. Then, for any $V\in \mb{R}^{\ell\times n}$, $VX$
is an $\mb{R}^\ell$-valued random variable with density bounded by 
\[\frac{e^{\ell/2}K^{\ell}}{\det(VV^{\mr{T}})^{1/2}}.\]
\end{lemma}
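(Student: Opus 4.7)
My plan is to reduce the problem to the case where $V$ has orthonormal rows via the singular value decomposition, and then deduce the density estimate via a sharp Fourier-analytic argument for sums of independent real random variables.

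\textbf{Step 1: SVD reduction.} I would write $V = TU$ where $U \in \mathbb{R}^{\ell \times n}$ has orthonormal rows and $T \in \mathbb{R}^{\ell \times \ell}$ is invertible with $|\det T| = \sqrt{\det(VV^{\mr{T}})}$. Since the density of a random vector transforms under an invertible linear map of $\mathbb{R}^\ell$ by the Jacobian factor,
\[
\|f_{VX}\|_\infty = \det(VV^{\mr{T}})^{-1/2}\,\|f_{UX}\|_\infty,
\]
so the task reduces to showing $\|f_{UX}\|_\infty \leq e^{\ell/2}K^\ell$ whenever $UU^{\mr{T}} = I_\ell$.

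\textbf{Step 2: Reduced density bound.} For $\ell = 1$, $U = a^{\mr{T}}$ is a unit vector and the claim becomes the sharp Rogozin-type inequality $\|f_{\sum_i a_i X_i}\|_\infty \leq \sqrt{e}\,K$. I would prove this by Fourier inversion: writing $\phi_i$ for the characteristic function of $X_i$, one has $\|f_{\sum a_i X_i}\|_\infty \leq (2\pi)^{-1}\int |\prod_i \phi_i(a_i\xi)|\,d\xi$, and combining the Plancherel identity $\|\phi_i\|_2^2 = 2\pi\|f_i\|_2^2 \leq 2\pi K$ with a carefully chosen Hölder/Young splitting weighted by the $a_i^2$ yields the constant $\sqrt e$ (which matches the Gaussian extremizer). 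For $\ell = 2$, the same Fourier approach applies in two variables to bound $\int_{\mb{R}^2}\prod_i|\phi_i(a_i\xi_1 + b_i\xi_2)|\,d\xi_1\,d\xi_2$; the orthonormality of the two rows $a,b$ of $U$ provides the Brascamp--Lieb data $\sum_i (a_i,b_i)(a_i,b_i)^{\mr{T}} = I_2$ needed to split the two-dimensional integral, giving a factor of $\sqrt e$ per dimension and hence $e K^2$ in total.

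\textbf{Main obstacle.} The principal challenge is extracting the \emph{sharp} constant $e^{\ell/2}$. A cruder bound of the form $C^\ell K^\ell/\sqrt{\det(VV^{\mr{T}})}$ with $C$ an absolute constant follows readily from the $L^1$--$L^\infty$ convolution inequality $\|f\ast g\|_\infty \leq \|f\|_1\|g\|_\infty$ combined with an iterative reduction over coordinates, but matching the optimal value requires either the sharp form of Young's convolution inequality (with its Gaussian extremizer), a Brascamp--Lieb argument in the appropriate form, or the direct slicing/rearrangement approach of Livshyts--Paouris--Pivovarov. For the purposes of the present paper, any constant depending only on $\ell$ would already suffice downstream in \cref{prop:unif-bound}, so the entire difficulty of this lemma is concentrated in the optimization step producing $e^{\ell/2}$.
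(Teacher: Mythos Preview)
The paper does not supply its own proof of this lemma; it is quoted directly from \cite[Theorem~1.1]{LPP16} and used as a black box, so there is no in-paper argument to compare against.

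Your Step~1 SVD reduction to the orthonormal-row case is correct and standard (and is also how \cite{LPP16} begin). Your Step~2, however, is a sketch rather than a proof: you correctly identify that the entire difficulty lies in the sharp constant $e^{\ell/2}$, and then list several heavy tools (sharp Young, Brascamp--Lieb, the rearrangement approach of \cite{LPP16}) any one of which would close the gap, without actually carrying one out. In particular, the H\"older splitting with exponents $1/a_i^2$ that you gesture at for $\ell=1$ requires controlling $\int|\phi_i(\xi)|^{p}\,d\xi$ for $p<2$ whenever some $a_i^2>1/2$, and Plancherel alone does not give this---one genuinely needs Ball's integral inequality (or an equivalent input) at that point, which your outline does not supply. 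For $\ell=2$ the situation is similar: the Brascamp--Lieb data you write down is correct, but invoking the inequality with its sharp constant is itself the nontrivial content. So what you have is a sound reduction followed by an honest acknowledgment that the remaining step is outsourced; it is not a self-contained argument. Your closing remark that any constant depending only on $\ell$ would suffice for the downstream applications in \cref{prop:unif-bound} and \cref{lem:two-dimensionality} is accurate and worth retaining.
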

%We first prove \cref{prop:unif-bound}; this proof as mentioned before is closely related to those given in \cite{Rud13} and also as \cite[Observation~2]{Tik17}. We first deduce a bound on hyperplane distances.
This has the following immediate corollary. 
\begin{lemma}\label{lem:universal-hyperplane}
For any $A\in\mb{C}^{n\times n}$, let $M = A + \delta G_n$. For all $j\in [n]$, if $v_j$ is a unit normal to $H_j = \on{span}\{M_k: k\neq j\}$, then
\[\mb{P}[|\sang{M_j,v_j}|\le\epsilon|M_{-j}]\le 2\sqrt{2e}Kn^{1/2}\frac{\epsilon}{\delta}.\]
Here, $M_{-j}$ denotes the set of all columns excluding $M_{j}$. 
\end{lemma}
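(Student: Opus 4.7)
The plan is to bound $|\sang{M_j,v_j}|$ by the larger of its real or imaginary parts and then apply the one-dimensional case ($\ell=1$) of \cref{lem:low-density}. Conditioning on $M_{-j}$ makes the complex unit normal $v_j$ measurable, while leaving the $j$-th column $\delta(G_n)_{\cdot j}$ independent with i.i.d.\ real entries of density bounded by $Kn^{1/2}/\delta$. Decompose $v_j = v_j^{(1)}+\mathbf{i}\,v_j^{(2)}$ with $v_j^{(1)},v_j^{(2)}\in\mb{R}^n$; unitarity forces $\|v_j^{(1)}\|_2^2+\|v_j^{(2)}\|_2^2=1$, so by symmetry we may assume $\|v_j^{(1)}\|_2\ge 1/\sqrt{2}$ (if instead the imaginary part is larger, we work with $\on{Im}\sang{M_j,v_j}$ and the argument is identical).

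The key observation is that because $\delta(G_n)_{ij}$ is \emph{real}, only $v_j^{(1)}$ couples with the randomness in the real part. Expanding the Hermitian inner product, the random contribution to $\on{Re}\sang{M_j,v_j}$ is exactly $\delta\sum_{i=1}^n (G_n)_{ij}(v_j^{(1)})_i$, while the (possibly complex) entries of the deterministic column $A_j$ contribute only a constant shift. Applying \cref{lem:low-density} with $\ell=1$ to the row $(v_j^{(1)})^{\mr{T}}\in\mb{R}^{1\times n}$ and the independent entries $\{\delta(G_n)_{ij}\}_{i=1}^n$ (each of density at most $Kn^{1/2}/\delta$), the conditional density of $\on{Re}\sang{M_j,v_j}$ given $M_{-j}$ is bounded by $\sqrt{e}\cdot(Kn^{1/2}/\delta)/\|v_j^{(1)}\|_2\le\sqrt{2e}\,Kn^{1/2}/\delta$. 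Since $|\sang{M_j,v_j}|\ge|\on{Re}\sang{M_j,v_j}|$, integrating this density bound over an interval of length $2\epsilon$ yields the claim.

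There is no serious obstacle in the argument. The one point worth flagging is why we settle for an $\epsilon$ rate here rather than trying to apply \cref{lem:low-density} with $\ell=2$ directly. The $\ell=2$ approach would produce a density bound on the full $\mb{C}$-valued inner product with constant $eK^2/\det(VV^{\mr{T}})^{1/2}$, where $V\in\mb{R}^{2\times n}$ has rows $v_j^{(1)}$ and $v_j^{(2)}$; this determinant degenerates precisely when $v_j^{(1)}$ and $v_j^{(2)}$ are nearly parallel in $\mb{R}^n$, i.e.\ when $v_j$ is close to a real multiple of a real vector. That degeneracy is exactly the phenomenon described in \cref{sub:difficulties}, and it is the reason why the $\epsilon^2$-rate of \cref{lem:general-small-ball} is genuinely unavailable for real perturbations.
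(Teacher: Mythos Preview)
Your proof is correct and follows essentially the same approach as the paper's: reduce to the real part of $\sang{M_j,v_j}$ (the paper phrases this as multiplying $v_j$ by an overall phase so that the real part $x_j$ satisfies $\snorm{x_j}_2\ge 1/\sqrt{2}$, which is equivalent to your case split between $\on{Re}$ and $\on{Im}$), then apply the $\ell=1$ case of \cref{lem:low-density} to the real linear form $\delta\sang{(G_n)_j,v_j^{(1)}}$ to obtain the density bound $\sqrt{2e}\,Kn^{1/2}/\delta$ and integrate over $[-\epsilon,\epsilon]$. Your closing remark about the degeneracy of the $\ell=2$ approach is also exactly the point the paper makes in \cref{sub:difficulties}.
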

\begin{proof}
After possibly multiplying by an overall phase, we may assume that $v_j = x_j + iy_j$ satisfies $\snorm{x_j}_2\ge 1/\sqrt{2}$. Note that multiplication by an overall phase does not affect the random variable $|\langle M_j, v_j \rangle|$.   
Then,
\begin{align*}
\mb{P}[\on{dist}(M_j,H_j)\le\epsilon |M_{-j}] &\leq \mb{P}[|\sang{M_j,v_j}|\le\epsilon |M_{-j}] = \mb{P}[|\sang{A_j,v_j}+\delta\sang{(G_n)_j,v_j}|\le\epsilon |M_{-j}]\\
&\le\mb{P}[|\on{Re}(\sang{A_j,v_j})+\delta\sang{(G_n)_j,x_j}|\le\epsilon |M_{-j}]\\
&\le 2\sqrt{2e}Kn^{1/2}\frac{\epsilon}{\delta}.
\end{align*}
For the last inequality, note that $\delta(G_n)_j$ is a vector of independent real random variables, each with density bounded by $Kn^{1/2}\delta^{-1}$, so that applying \cref{lem:low-density} to this vector with $V = x_{j}^{\mr{T}}$, and using $\|x_{j}\|_{2} \geq 1/\sqrt{2}$ to control the term $\det(VV^{\mr{T}})$ gives the desired conclusion.
%The last inequality uses that $\snorm{x_j}_2\ge 1/\sqrt{2}$, combined with \cref{lem:low-density} for $\ell = 1$. Note that $\delta (G_n)_j$ is a vector of real random variables with densities bounded by $Kn^{1/2}\delta^{-1}$.
\end{proof}
Now we are ready to deduce \cref{prop:unif-bound}.
\begin{proof}[Proof of \cref{prop:unif-bound}]
Let $M = A + \delta G_n$, with columns $M_j$ for $1\le j\le n$. Define hyperplanes $H_j = \on{span}\{M_k: k\neq j\}$. Using \cref{lem:reduction}, we have
\[\mb{P}[\sigma_n(M)\le\epsilon]\le\sum_{j=1}^n\mb{P}[\on{dist}(M_j,H_j)\le\epsilon n^{1/2}].\]
For each $1\le j\le n$, choose a unit normal vector $z_j$ of $H_j$ (independently of $M_j$). Then, \cref{lem:universal-hyperplane} gives
\[\mb{P}[\on{dist}(M_j,H_j)\le\epsilon n^{1/2}|M_{-j}] \le 2\sqrt{2e}Kn\frac{\epsilon}{\delta}.\]
Since this probability is uniform over the realization of $M_{-j}$, we may remove the conditioning using the law of total probability to finish the proof. 
%Taking an expectation over $M_{-j}$ finishes and plugging into the above finishes.
\end{proof}

\subsection{Proof of \texorpdfstring{\cref{prop:complex-sv}}{Proposition 4.2}}
The following is the key step in the proof. 
\begin{lemma}\label{lem:two-dimensionality}
Let $A\in\mb{R}^{n\times n}$ with $\snorm{A}\le 1$ and $\delta \in (0,1)$. Let $z\in \mb{C}$ such that $|z|\le 3\delta K'+3$ and $M = A + \delta G_n-zI$. Let $\mc{E}_{K'}$ denote the event that $\snorm{G_n}\le K'$, let $H_j=\on{span}\{M_k:k\neq j\}$, and let $v_j$ be any unit normal vector of $H_j$. Then,
\[\mb{P}[\mc{E}_{K'} \cap|\sang{M_j,v_j}| \le \epsilon|M_{-j}]\le c_{\ref{lem:two-dimensionality}}(1+\delta K')\frac{\max(K,K^2)n\epsilon^2}{\delta^2|\on{Im}z|},\]
where $c_{\ref{lem:two-dimensionality}}$ is an absolute constant. 
\end{lemma}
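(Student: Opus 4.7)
The plan is to condition on $M_{-j}$, after which $v_j$ is determined up to an overall complex phase, and to perform a case analysis based on how ``two-dimensional'' (over $\mb{R}$) the complex vector $v_j$ is. The $\epsilon^2$ rate will come from a $2$-dimensional density bound via \cref{lem:low-density} in a non-degenerate regime, while in the degenerate regime I will instead produce a deterministic lower bound on $|\sang{M_j,v_j}|$ that forces the event to be empty on $\mc{E}_{K'}$.

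To set up, I decompose $v_j = x+iy$ with $x,y\in\mb{R}^n$. Since both $|\sang{M_j,v_j}|$ and $\det(VV^T)$ (with $V := \begin{pmatrix} x^T \\ y^T\end{pmatrix}$) are invariant under $v_j\mapsto e^{i\theta}v_j$, I may assume $\sang{x,y} = 0$ and $\snorm{x}^2\ge 1/2\ge\snorm{y}^2$, so $\det(VV^T) = \snorm{x}^2\snorm{y}^2$. Using that $A$ and $G_n$ are real, a direct calculation gives
\[\on{Re}\sang{M_j,v_j} = \sang{A_j+\delta(G_n)_j, x} - \on{Re}(z)x_j - \on{Im}(z)y_j,\]
\[\on{Im}\sang{M_j,v_j} = -\sang{A_j+\delta(G_n)_j, y} + \on{Re}(z)y_j - \on{Im}(z)x_j,\]
presenting $\{|\sang{M_j,v_j}|\le\epsilon\}$ as membership of the $\mb{R}^2$-valued random vector $(\sang{\delta(G_n)_j, x},\sang{\delta(G_n)_j, y})$ in a $2\epsilon\times 2\epsilon$ axis-aligned box with center determined by $M_{-j}$. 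Moreover, combining $\sang{M_k,v_j}=0$ for $k\neq j$ with the splitting $M_k = B_k - i\on{Im}(z)e_k$, where $B := A+\delta G_n - \on{Re}(z)I$ is real, and separating real and imaginary parts yields the algebraic constraint
\[B_{-j}^T x = \on{Im}(z)\pi_{-j}(y),\qquad B_{-j}^T y = -\on{Im}(z)\pi_{-j}(x),\]
where $\pi_{-j}$ deletes the $j$-th coordinate.

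Now fix a sufficiently large absolute constant $C_1$ (e.g., $C_1 = 16$ suffices) and set $\tau := |\on{Im}(z)|/(C_1(1+\delta K'))$. In the non-degenerate case $\det(VV^T)\ge\tau^2$, I apply \cref{lem:low-density} with $\ell = 2$ and density bound $K\sqrt n/\delta$ on entries of $\delta(G_n)_j$ to see that the $2$D random vector has density at most $eK^2n/(\delta^2\sqrt{\det(VV^T)})$; integrating over the $4\epsilon^2$-area box yields $\mb{P}[|\sang{M_j,v_j}|\le\epsilon\mid M_{-j}]\le 4eC_1(1+\delta K')K^2n\epsilon^2/(\delta^2|\on{Im}z|)$, which matches the target bound. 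In the degenerate case $\det(VV^T) < \tau^2$ one has $\snorm{y}^2\le 2\tau^2$, and the 2D bound fails. However, on $\mc{E}_{K'}$ we have $\snorm{B}\le 1+\delta K'+|\on{Re}(z)|\le 4(1+\delta K')$, so the algebraic constraint $B_{-j}^T y = -\on{Im}(z)\pi_{-j}(x)$ forces $\snorm{\pi_{-j}(x)}^2\le\snorm{B}^2\snorm{y}^2/|\on{Im}z|^2\le 32/C_1^2\le 1/8$, whence $x_j^2\ge\snorm{x}^2-\snorm{\pi_{-j}(x)}^2\ge 3/8$. Consequently $|\on{Im}\sang{M_j,v_j}|\ge|\on{Im}z||x_j|-\snorm{B}\snorm{y}\ge|\on{Im}z|\sqrt{3/8}-4\sqrt 2|\on{Im}z|/C_1\ge|\on{Im}z|/4$, so on $\mc{E}_{K'}$ the event is empty when $\epsilon<|\on{Im}z|/4$. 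For $\epsilon\ge|\on{Im}z|/4$, I conclude by the $1$D estimate $\mb{P}[|\on{Re}\sang{M_j,v_j}|\le\epsilon\mid M_{-j}]\le 2\sqrt{2e}K\sqrt n\epsilon/\delta$ (from \cref{lem:low-density} with $\ell = 1$, $V = x^T$, and $\snorm{x}\ge 1/\sqrt 2$), which is automatically stronger than the target bound in this regime since $\epsilon/|\on{Im}z|\ge 1/4$.

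The main obstacle is the degenerate case: the algebraic constraint must translate ``$v_j$ is close to a complex scalar multiple of a real vector'' (small $\det(VV^T)$) into ``$v_j$ is highly concentrated on coordinate $j$'' (large $|x_j|$), ensuring that the deterministic contribution $\on{Im}(z)x_j$ dominates $\on{Im}\sang{M_j,v_j}$ and cannot be cancelled by the controlled term $\sang{A_j+\delta(G_n)_j,y}-\on{Re}(z)y_j$. Both the hypothesis $|z|\le 3\delta K'+3$ and the operator-norm control on $\mc{E}_{K'}$ (giving $\snorm{B}=O(1+\delta K')$) are essential here.
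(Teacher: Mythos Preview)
Your proof is correct and follows essentially the same approach as the paper: both use the two-dimensional density estimate from \cref{lem:low-density} when the real and imaginary parts of $v_j$ are sufficiently ``spread'' (i.e., $\det(VV^T)$ is bounded below), and both rule out the degenerate regime on $\mc{E}_{K'}$ via the operator-norm bound on $B$, falling back on the one-dimensional estimate when $\epsilon\gtrsim|\on{Im}z|$.

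The organization differs slightly. The paper splits first on $\epsilon$ versus $|\on{Im}z|/4$ and, in the small-$\epsilon$ case, argues globally via $\snorm{M^*v_j}_2=|\sang{M_j,v_j}|$ and the imaginary-part equation $\snorm{B^Ty+\on{Im}(z)x}_2\le\epsilon$ to force $\min_\theta\snorm{\on{Im}(e^{i\theta}v_j)}_2$ large, then converts this to a lower bound on $\det(VV^T)^{1/2}$ using \cite[Proposition~B.0.1]{Ge17}. You instead split first on $\det(VV^T)$ (a function of $M_{-j}$), and in the degenerate branch work coordinate-wise: the orthogonality constraints for $k\ne j$ yield $B_{-j}^Ty=-\on{Im}(z)\pi_{-j}(x)$, forcing $|x_j|$ large, after which the $j$-th coordinate gives the deterministic lower bound on $|\on{Im}\sang{M_j,v_j}|$. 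Your phase normalization $\sang{x,y}=0$ makes the relation $\det(VV^T)=\snorm{x}^2\snorm{y}^2$ transparent and avoids the appeal to \cite[Proposition~B.0.1]{Ge17}; otherwise the two arguments are contrapositives of one another.
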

\begin{proof}
Let $B := A + \delta G_n - \on{Re}(z)I$. %$z \in \mathcal{D}(0,3\delta K'+3)$ and $\snorm{A}\le 1$. 
For convenience of notation, let $\rho = \on{Im}z$. We split into cases based on $|\rho|$ versus $\epsilon$.\\

\textbf{Case I: }$|\rho| \leq 4\epsilon$. From \cref{lem:universal-hyperplane} applied to $A - zI$, we find that
\[\mb{P}[|\sang{M_j,z_j}|\le\epsilon|M_{-j}]\le 2\sqrt{2e}Kn^{1/2}\frac{\epsilon}{\delta}\le 2\sqrt{2e}Kn\frac{4\epsilon^2}{|\rho|\delta^2},\]
since $\delta < 1$. This finishes the proof in this case. \\

\textbf{Case II: }$|\rho| > 4\epsilon$. We initially show, restricted to $\mc{E}_{K'}$, that no $v := x+iy \in \mb{S}^{n-1}$ with $\snorm{y}_2 < |\rho|/(16\delta K'+16)$ can satisfy
\[\snorm{Mv}_2 < \epsilon.\]
Indeed, if this were true, then writing down the equation for the imaginary part of $Mv$, and using $M = B - i\rho I$, where $B,\rho$ are real, we get that
\[\snorm{By - \rho x}_2 < \epsilon.\]
Thus, we would have
\[\frac{|\rho|}{2}<\snorm{\rho x}_2 < \epsilon + \snorm{By}_2 < \epsilon + \frac{|\rho|}{4},\]
where the first inequality uses $\snorm{y}_2\le 1/6$ to deduce $\snorm{x}_2\ge 1/2$, the second inequality is simply the triangle inequality, and the final inequality uses that $\snorm{B}\le 1 + K'\delta + 3(\delta K'+1)\le 4(\delta K'+1)$. But this is a contradiction to $|\rho| > 4\epsilon$.

Now note that if $\mc{E}_{K'}\cap|\sang{M_j,v_j}|\le\epsilon$ occurs, then \[\snorm{Mv_j}_2 = |\sang{M_j,v_j}|\le\epsilon,\]
which implies that $v_j=x_j+iy_j$ satisfies $\snorm{y_j}_2\ge|\rho|/(16\delta K'+16)$. In fact, by shifting $v_j$ by a phase in the above argument, we may conclude that
\[\snorm{\omega v_j}_2\ge\frac{|\rho|}{16\delta K'+16}\]
for all $\omega\in\mb{C}$ satisfying $|\omega| = 1$.

Next, by \cref{lem:low-density} in the case $\ell = 2$, applied to $V = [x_j, y_j]^{\mr{T}}$, we have
\[\mb{P}[\mc{E}_{K'}\cap|\sang{M_j,v_j}|\le\epsilon|M_{-j}]\le\frac{en\delta^{-2}K^2}{\det(VV^\mr{T})^{1/2}}\cdot\pi\epsilon^2.\]
This follows since $\langle M_j, v_j \rangle$ (viewed as an $\mb{R}^{2}$-valued random variable) is distributed as a deterministic translation of the $\mb{R}^{2}$-valued random variable $V (\delta (G_n)_j)$, and $\delta (G_n)_j$ is a random vector, each of whose components is a real random variable with density bounded by $Kn\delta^{-1}$.

It remains to estimate $d:=\det(V V^{\mr{T}})^{1/2}$. By \cite[Proposition~B.0.1]{Ge17}, %if we let $d = \det(VV^\mr{T})^{1/2}$ then 
we have
\[\min_{\theta\in\mb{R}}\snorm{e^{i\theta}z_j}_2^2 = \frac{1}{2}-\frac{\sqrt{1-4d^2}}{2}\le 2d^2,\]
hence
\[d\ge\frac{|\rho|}{16\sqrt{2}(\delta K'+1)},\]
which completes the proof. 
%We deduce
%\[\mb{P}[\mc{E}_{K'}\cap|\sang{M_j,z_j}|\le\varepsilon|M_{-j}]\le16\sqrt{2}(\delta K'+1)\frac{en\delta^{-2}K^2}{|\rho|}\cdot\pi\varepsilon^2,\]
%which finishes the proof in this case.
\end{proof}
%Given the above we can now deduce the crucial singular value estimate which we will require.
\begin{proof}[Proof of \cref{prop:complex-sv}]
\cref{lem:reduction,lem:two-dimensionality} give
\[\mb{P}[\mc{E}_{K'}\cap\sigma_n(M)\le\epsilon]\le n\left(c_{\ref{lem:two-dimensionality}}(1+\delta K')\frac{\max(K,K^2)n(\epsilon n^{1/2})^2}{\delta^2|\on{Im}z|}\right).\qedhere\]
\end{proof}

\section{Proof of \texorpdfstring{\cref{prop:spacing}}{Proposition 4.3}}\label{app:spacing}
%The goal of this section is to establish \cref{prop:spacing}. 
The proof of \cref{prop:spacing} follows a similar general outline as in \cite{Ge17}.\\ 

We first require a statement relating relating the property that two eigenvalues are contained within a disk to bounds on the two smallest singular values of an appropriately shifted matrix. This follows immediately from the log-majorization theorem, which implies that $\sigma_n\sigma_{n-1}\le|\lambda_n\lambda_{n-1}|$ \cite{HJ91}. This also follows quickly from \cite[Lemma~A.1,~A.2]{LO20}.
\begin{lemma}[{From \cite[Theorem~3.3.2]{HJ91}}]\label{lem:orthog}
Fix $N\in \mb{C}^{n\times n}$ and $z\in \mb{C}$ with $|z|\le \norm{N}$. Let $M = N - zI$ and suppose there exist $\lambda_i,\lambda_j\in \mc{D}(z,s)$ for $i\neq j$. Then, there exist $s\le t\le 2\norm{N}$ such that
\[\sigma_n(M)\le s^2/t\emph{ and } \sigma_{n-1}(M)\le 2t.\]
\end{lemma}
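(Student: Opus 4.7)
The plan is to reduce the claim directly to Weyl's log-majorization for singular values and eigenvalues, which is the content of \cite[Theorem~3.3.2]{HJ91}. Specifically, after shifting to $M = N - zI$, the hypothesis $\lambda_i, \lambda_j \in \mc{D}(z,s)$ translates into the existence of two eigenvalues of $M$, namely $\lambda_i - z$ and $\lambda_j - z$, each of modulus at most $s$. Letting $\mu_1(M), \ldots, \mu_n(M)$ denote the eigenvalues of $M$ ordered so that $|\mu_1(M)| \ge \cdots \ge |\mu_n(M)|$, we conclude $|\mu_n(M)|, |\mu_{n-1}(M)| \le s$.

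Next, I would invoke Weyl's inequality in its log-majorized form applied to the smallest two indices: the product of the two smallest singular values is bounded by the product of the two smallest-in-modulus eigenvalues, i.e.
\[\sigma_n(M)\,\sigma_{n-1}(M) \;\le\; |\mu_n(M)|\,|\mu_{n-1}(M)| \;\le\; s^2.\]
Simultaneously, $\sigma_{n-1}(M)\le \snorm{M}\le \snorm{N}+|z|\le 2\snorm{N}$ by the triangle inequality and the hypothesis $|z|\le\snorm{N}$.

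The final step is to package these two inequalities into the desired statement by an appropriate choice of $t$. I would take $t := \max(s,\sigma_{n-1}(M))$ when $s\le 2\snorm{N}$, and $t := 2\snorm{N}$ otherwise. In the first case, if $t = \sigma_{n-1}(M)$ then $\sigma_{n-1}(M) = t\le 2t$ and $\sigma_n(M)\le s^2/\sigma_{n-1}(M) = s^2/t$ from Weyl; while if $t = s$ then $\sigma_{n-1}(M)\le s\le 2t$ and $\sigma_n(M)\le \sigma_{n-1}(M)\le s = s^2/t$. In the second (essentially trivial) case, $s > 2\snorm{N}$ forces $s^2/t > s > \sigma_n(M)$ and $2t = 4\snorm{N} \ge \sigma_{n-1}(M)$. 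In all cases $s\le t\le 2\snorm{N}$ as required.

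There is no real obstacle beyond correctly invoking the Horn--Weyl log-majorization; the only subtlety is to verify the endpoint condition $s\le t\le 2\snorm{N}$ via the small case split above, but this is routine once one writes $t = \max(s,\sigma_{n-1}(M))$ (truncated at $2\snorm{N}$ if necessary).
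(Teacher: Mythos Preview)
Your approach is essentially the same as the paper's: both invoke the log-majorization inequality $\sigma_n(M)\,\sigma_{n-1}(M)\le|\mu_n(M)\,\mu_{n-1}(M)|\le s^2$ from \cite[Theorem~3.3.2]{HJ91} and then read off the conclusion by choosing $t$ appropriately. One minor correction: your Case~2 (the regime $s>2\snorm{N}$) does not work as written, since your choice $t=2\snorm{N}$ then violates $s\le t$; but in that regime the interval $[s,2\snorm{N}]$ is empty, so the lemma's conclusion is vacuous and this case is never invoked in the paper.
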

We now prove a variant of \cref{prop:complex-sv} for the smallest two singular values.
\begin{lemma}\label{lem:complex-sv-2}
Let $A\in\mb{R}^{n\times n}$ with $\snorm{A}\le 1$ and let $\delta \in (0,1)$. Let $z\in \mb{C}$ with $|z|\le 3\delta K'+3$ and  $M = A + \delta G_n-zI$. Let $\mc{E}_{K'}$ denote the event that $\snorm{G_n}\le K'$. Then, 
\[\mb{P}[\sigma_n(M) < \epsilon_1 \cap \sigma_{n-1}(M)\le \epsilon_2 \cap \mc{E}_{K'}]\le c_{\ref{lem:complex-sv-2}}(1+\delta K')^2\max(K^2,K^4)n^6\frac{\epsilon_1^2\epsilon_2^2}{\delta^4(\on{Im}z)^2}\]
where $c_{\ref{lem:complex-sv-2}}$ is an absolute constant.
\end{lemma}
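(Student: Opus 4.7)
The plan is to follow the proof of \cref{prop:complex-sv}, but using \cref{lem:double-reduction} in place of \cref{lem:reduction}, which yields
\[\mb{P}[\sigma_n(M) < \epsilon_1 \cap \sigma_{n-1}(M) \le \epsilon_2 \cap \mc{E}_{K'}] \le \sum_{i \ne j} \mb{P}[\on{dist}(M_i, H_i) \le \epsilon_1 n^{1/2} \cap \on{dist}(M_j, H_{i,j}) \le \epsilon_2 n^{1/2} \cap \mc{E}_{K'}].\]
For each pair $(i, j)$, I would first condition on $M_{-i}$; this determines $H_i$ (of codimension $1$ almost surely) along with its unit normal $v_i$, so that $\on{dist}(M_i, H_i) = |\sang{M_i, v_i}|$. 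Applying \cref{lem:two-dimensionality} with $v_j \mapsto v_i$ and $\epsilon \mapsto \epsilon_1 n^{1/2}$ then bounds the inner conditional probability uniformly in $M_{-i}$ by $C(1+\delta K')\max(K, K^2) n^2 \epsilon_1^2 /(\delta^2 |\on{Im} z|)$.

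For the remaining outer factor $\mb{P}[\on{dist}(M_j, H_{i,j}) \le \epsilon_2 n^{1/2}]$, I would further condition on $M_{-\{i, j\}}$, so that $H_{i,j}$ (codimension $2$ a.s.) and its $\mb{C}$-$2$-dimensional orthogonal complement $H_{i,j}^\perp$ are determined. The key step is to choose a unit $u \in H_{i,j}^\perp$, measurable in $M_{-\{i,j\}}$, such that writing $u = x + iy$ with $x, y \in \mb{R}^n$, we have $d(u)^2 := \|x\|^2 \|y\|^2 - \sang{x, y}^2 \ge 1/4$. Given such $u$, the inequality $\on{dist}(M_j, H_{i,j}) \ge |\sang{M_j, u}|$ together with an application of \cref{lem:low-density} (with $\ell = 2$ and $V = (x^\mr{T}, -y^\mr{T})^\mr{T}$) to the $\mb{R}^2$-valued random variable $(\on{Re}\sang{M_j, u}, \on{Im}\sang{M_j, u})$---a deterministic affine image of $\delta (G_n)_j$---bounds its density by $eK^2 n/(\delta^2 d(u)) \le 2eK^2 n/\delta^2$, so integrating over a disk of radius $\epsilon_2 n^{1/2}$ gives
\[\mb{P}[\on{dist}(M_j, H_{i,j}) \le \epsilon_2 n^{1/2} \mid M_{-\{i,j\}}] \le 2\pi e K^2 n^2 \epsilon_2^2 /\delta^2.\]

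Combining these bounds and summing over the $n(n-1) \le n^2$ pairs yields an estimate of order $(1+\delta K')\max(K^3, K^4) n^6 \epsilon_1^2 \epsilon_2^2 /(\delta^4 |\on{Im} z|)$, which falls short of the target by only a factor of $(1+\delta K')/|\on{Im} z|$; this is absorbed via the inequality $|\on{Im} z| \le |z| \le 3(1+\delta K')$ into the absolute constant $c_{\ref{lem:complex-sv-2}}$.

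The main obstacle is the existence of the unit $u \in H_{i,j}^\perp$ with $d(u) \ge 1/2$. I would establish it via a short topological argument. Consider the continuous map $f: S_H \to \mb{R}^2$, $f(u) := (\|x\|^2 - 1/2,\, \sang{x, y})$, on the unit sphere $S_H \cong S^3$ of $H := H_{i,j}^\perp \cong \mb{C}^2$. A direct computation using $x_\theta = \cos\theta\, x - \sin\theta\, y$ and $y_\theta = \sin\theta\, x + \cos\theta\, y$ (for $u \mapsto e^{i\theta} u$) shows $f(e^{i\theta} u) = R_{-2\theta} f(u)$, where $R_\phi$ denotes counterclockwise rotation by $\phi$ in $\mb{R}^2$. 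If $f$ were nowhere vanishing, then $f/|f|: S_H \to S^1$, restricted to any $U(1)$-orbit in $S_H$, would trace a loop of winding number $-2$ around $S^1$; but since $S_H$ is simply connected, $f/|f|$ would lift to an $\mb{R}$-valued continuous map on $S_H$, forcing zero winding on every loop, a contradiction. Thus $f$ must vanish at some $u_\ast \in S_H$, where $\|x_\ast\|^2 = \|y_\ast\|^2 = 1/2$ and $\sang{x_\ast, y_\ast} = 0$, giving $d(u_\ast) = 1/2$. A measurable selection of such $u_\ast$ as a function of $M_{-\{i, j\}}$ is obtained by first extracting a $\mb{C}$-orthonormal basis of $H_{i,j}^\perp$ via Gram-Schmidt on a fixed measurable pair of vectors (e.g.\ the projections of two reference basis vectors of $\mb{C}^n$), and then canonically selecting a zero of $f$.
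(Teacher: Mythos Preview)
Your proof is correct, but takes a genuinely different route from the paper for the second factor $\on{dist}(M_j,H_{i,j})$.

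The paper handles this factor by choosing a unit normal $v_{i,j}$ to $H_{i,j}$ whose $i$th coordinate vanishes, so that $|\sang{M_j,v_{i,j}}|$ only sees the $(n-1)\times(n-1)$ principal minor obtained by deleting row and column $i$; it then reapplies \cref{lem:two-dimensionality} to that minor (after introducing the auxiliary event $\mc{E}_{K'}^i=\{\snorm{G_i}\le K'\}$ to ensure the operator-norm hypothesis). This yields a second factor of order $(1+\delta K')\max(K,K^2)n^2\epsilon_2^2/(\delta^2|\on{Im}z|)$, so the product carries $|\on{Im}z|^{-2}$ directly and exactly matches the stated bound.

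Your route instead exploits the fact that $H_{i,j}^\perp$ is two-complex-dimensional to find a unit normal $u$ with $d(u)=1/2$, after which a single call to \cref{lem:low-density} (no operator-norm event needed) gives the factor $K^2n^2\epsilon_2^2/\delta^2$ \emph{without} an $|\on{Im}z|^{-1}$. This actually produces a sharper intermediate estimate, which you then weaken via $|\on{Im}z|\le 3(1+\delta K')$ to match the lemma as stated. Two remarks: (i) your rotation formula should read $f(e^{i\theta}u)=R_{2\theta}f(u)$ rather than $R_{-2\theta}$, but the winding-number argument is unaffected since $\pm 2\ne 0$; (ii) the topological step has a one-line algebraic substitute: for unit $u=x+iy$ one checks $4d(u)^2=1-|\sum_k u_k^2|^2$, and the restriction of the quadratic form $Q(u)=\sum_k u_k^2$ to any two-dimensional complex subspace is a binary quadratic over $\mb{C}$, which always has a nontrivial zero; normalizing gives $d(u)=1/2$ and makes the measurable selection of $u_\ast$ explicit (roots of a quadratic in the chosen basis).

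In short: the paper reuses \cref{lem:two-dimensionality} twice for brevity and symmetry; your argument trades this for a small auxiliary lemma but avoids the minor-restriction and the extra event $\mc{E}_{K'}^i$, and yields a slightly stronger unrecorded bound.
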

\begin{proof}
Let $G_j$ the principal minor of $G$ excluding the $j$th row and column and let $\mc{E}_{K'}^{j}$ denote the event that $\norm{G_j}\le K'$. Then by \cref{lem:double-reduction} we have that 
\begin{align*}
\mb{P}[\sigma_n(M) < \epsilon_1 & \cap \sigma_{n-1}(M)\le \epsilon_2 \cap \mc{E}_{K'}]\\
&\le \sum_{i=1}^n\sum_{j\neq i}\mb{P}[\on{dist}(M_i,H_i)\le \epsilon_1 n^{1/2}\cap \on{dist}(M_j,H_{i,j})\le \epsilon_2 n^{1/2}\cap \mc{E}_{K'}]\\
& = \sum_{i=1}^n\sum_{j\neq i}\mb{P}[\on{dist}(M_i,H_i)\le \epsilon_1 n^{1/2}\cap \on{dist}(M_j,H_{i,j})\le \epsilon_2 n^{1/2}\cap \mc{E}_{K'} \cap \mc{E}_{K'}^i]\\
&= \sum_{i=1}^n\sum_{j\neq i}\mb{E}\bigg[\mb{P}[ \on{dist}(M_j,H_{i,j})\le \epsilon_2 n^{1/2}\cap \mc{E}_{K'}^i|M_{-\{i,j\}}]\\
&\quad\quad\cdot\mb{P}[\on{dist}(M_i,H_i)\le \epsilon_1 n^{1/2}\cap \mc{E}_{K'}|M_{-\{i,j\}} \cap \on{dist}(M_j,H_{i,j})\le \epsilon_2 n^{1/2}\cap \mc{E}_{K'}^i]\bigg]\\
&\le \sum_{i=1}^n\sum_{j\neq i}\sup_{M_{-\{i,j\}}}\mb{P}[ \on{dist}(M_j,H_{i,j})\le \epsilon_2 n^{1/2}\cap \mc{E}_{K'}^i|M_{-\{i,j\}}]\\
&\quad\quad\cdot\sup_{M_{-i}}\mb{P}[ \on{dist}(M_i,H_i)\le \epsilon_1 n^{1/2}\cap \mc{E}_{K'}|M_{-i}].
\end{align*}
Note that the second term in the product is controlled by \cref{lem:two-dimensionality}, using that $\on{dist}(M_i,H_i) \geq |\sang{M_i,v_i}|$, where $v_i$ is a unit normal to $H_i$.

The first term can be controlled by noting that
\[\on{dist}(M_j,H_{i,j})\ge|\sang{M_j,v_{i,j}}|,\]
where $v_{i,j}$ is a unit normal to $H_{i,j}$ with $i$th coordinate $0$ and then applying \cref{lem:two-dimensionality} to the $(n-1)\times (n-1)$ minor of $M$ formed by excluding the $i$th row and column (we must replace $\delta$ by $\delta(1-n^{-1})^{1/2}$ in this application). 

To summarize, the last term in the chain of inequalities above can be bounded by
\[n(n-1)\left(c_{\ref{lem:two-dimensionality}}(1+\delta K')\frac{\max(K,K^2)n(\epsilon_1n^{1/2})^2}{\delta^2|\on{Im}z|}\right)\left(c_{\ref{lem:two-dimensionality}}(1+\delta K')\frac{\max(K,K^2)n(\epsilon_2n^{1/2})^2}{\delta^2(1-n^{-1})|\on{Im}z|}\right)\]
\[=c_{\ref{lem:two-dimensionality}}^2(1+\delta K')^2\max(K^2,K^4)n^6\frac{\epsilon_1^2\epsilon_2^2}{\delta^4(\on{Im}z)^2},\]
which completes the proof. 
\end{proof}

Finally, we state a variant of \cref{prop:unif-bound} for the smallest two singular values, whose proof follows in an identical manner to the proof of \cref{lem:complex-sv-2}, with  \cref{prop:unif-bound} replacing the application of \cref{prop:complex-sv}. We omit further details. 
\begin{lemma}\label{lem:unif-bound-2}
For any $A\in\mb{C}^{n\times n}$, we have
\[\mb{P}[\sigma_n(A+\delta G_n) < \epsilon_1 \cap \sigma_{n-1}(A+\delta G_n)\le \epsilon_2]\le c_{\ref{lem:unif-bound-2}}K^2n^4\frac{\epsilon_1\epsilon_2}{\delta^2}.\]
\end{lemma}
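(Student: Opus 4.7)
The plan is to adapt the proof of \cref{lem:complex-sv-2} essentially verbatim, replacing each invocation of the two-dimensional small-ball estimate \cref{lem:two-dimensionality} by the universal one-dimensional estimate \cref{lem:universal-hyperplane}. This substitution is permissible because \cref{lem:universal-hyperplane} applies to arbitrary complex mean profiles and requires no conditioning on an operator-norm event, so the events $\mc{E}_{K'}$ and $\mc{E}_{K'}^{i}$ that appeared in the proof of \cref{lem:complex-sv-2} will play no role, and no passage to an $(n-1)\times(n-1)$ minor (with the corresponding rescaling of $\delta$) is needed. The tradeoff is that each column now contributes a factor of order $\epsilon/\delta$ instead of $\epsilon^{2}/(\delta^{2}|\on{Im}z|)$, which is precisely reflected in the statement.

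With $M = A + \delta G_{n}$, I would first apply \cref{lem:double-reduction} with trivial event to reduce the target probability to a sum over pairs $i\neq j$ of joint small-ball events $\{\on{dist}(M_{i}, H_{i}) \le \epsilon_{1} n^{1/2}\} \cap \{\on{dist}(M_{j}, H_{i,j}) \le \epsilon_{2} n^{1/2}\}$, with $H_{i}$ and $H_{i,j}$ as in that lemma. For each fixed pair $(i,j)$, I would then condition on the remaining columns $M_{-\{i,j\}}$: this fixes $H_{i,j}$ deterministically, while $H_{i}$ depends further on $M_{j}$. Using that the columns of $M$ are independent and worst-casing the conditioning exactly as in the proof of \cref{lem:complex-sv-2}, the joint probability factors into a product of two unconditional suprema of small-ball probabilities, one for each column.

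Each such supremum is then controlled by \cref{lem:universal-hyperplane}. Picking unit normals $v_{i}$ to $H_{i}$ and $v_{i,j}$ to $H_{i,j}$ measurably with respect to the conditioning, the inequalities $\on{dist}(M_{i},H_{i}) \ge |\sang{M_{i}, v_{i}}|$ and $\on{dist}(M_{j}, H_{i,j}) \ge |\sang{M_{j}, v_{i,j}}|$ together with \cref{lem:universal-hyperplane} yield the bounds $2\sqrt{2e}\,K n^{1/2} \cdot \epsilon_{1} n^{1/2}/\delta$ and $2\sqrt{2e}\,K n^{1/2} \cdot \epsilon_{2} n^{1/2}/\delta$ respectively. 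Multiplying these factors and summing over the at most $n^{2}$ pairs $(i,j)$ delivers the claimed inequality with an explicit absolute constant (one may take $c_{\ref{lem:unif-bound-2}} = 8e$).

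I do not foresee any genuine obstacle: the argument is strictly easier than the proof of \cref{lem:complex-sv-2}, since the universal small-ball bound of \cref{lem:universal-hyperplane} does not require operator-norm control. The only point that warrants care is to choose the normals $v_{i}$, $v_{i,j}$ measurably in the conditioning so that the independence structure is preserved after worst-casing; this is entirely standard, and is already used in essentially the same form in the proofs of \cref{lem:general-small-ball} and \cref{lem:complex-sv-2}.
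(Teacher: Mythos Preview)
Your proposal is correct and follows essentially the same route the paper indicates: apply \cref{lem:double-reduction}, factor via conditioning as in the proof of \cref{lem:complex-sv-2}, and replace each use of \cref{lem:two-dimensionality} by \cref{lem:universal-hyperplane}. Your observation that one need not pass to an $(n-1)\times(n-1)$ minor is a slight simplification (valid because the proof of \cref{lem:universal-hyperplane} only uses that the unit vector is independent of $M_j$, not that it is specifically normal to $H_j$), but this does not change the overall approach or the final bound.
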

%\begin{proof}[Sketch]
%This follows in an identical manner to \cref{lem:complex-sv-2}, using \cref{lem:universal-hyperplane} for the final estimates instead of \cref{lem:two-dimensionality}.
%\end{proof}
Now, we are ready to complete the proof of our eigenvalue spacing result.
\begin{proof}[Proof of \cref{prop:spacing}]
Let $M = A+ \delta G_{n}$. We wish to bound the probability that both the events $\mc{E}_{K'}=\{\snorm{G_n}\le K'\}$ and $\eta(M)\le s$ occur.  Note that when this happens, the eigenvalues $\lambda_1,\ldots,\lambda_n$ of $M$ are contained in $\mc{D}(0,1+\delta K')$, and there is some pair $\lambda_i,\lambda_j$ (with $i\neq j$) within a distance $s$ of each other.

Fix some height $H\ge s$ to be chosen later, and cover the disk $\mc{D} = \mc{D}(0,1+\delta K')$ by regions
\[\mc{R}_j = \mc{D}\cap\{\on{Im}z\in[(2j-1)H,(2j+1)H]\}\]
for all integers $j$ with $|j|\le(1+\delta K')/(2H)$. Then, we cover each $\mc{R}_j$ by $O(H(1+\delta K')s^{-2})$ disks of radius $s$, which we denote by $\mc{D}(z_\alpha,s)$ for $\alpha\in A_j$. Thus $|A_j|\lesssim H(1+\delta K')s^{-2}$. Since $\lambda_i$ is in one of these disks, it follows that there exists a pair $i\neq j$ such that $\lambda_i,\lambda_j\in\mc{D}(z_\alpha,2s)$ for some $j$ and some $\alpha\in A_j$.

From this, and by \cref{lem:orthog}, we have
\begin{align*}
\mb{P}[\mc{E}_{K'}\cap\eta(M)\le s]&\le\sum_j\sum_{\alpha\in A_j}\mb{P}[\exists\lambda_i,\lambda_j\in\mc{D}(z_\alpha,2s)]\\
&\le\sum_j\sum_{\alpha\in A_j}\mb{P}\big[\exists t\in[s,2\snorm{M}]: \sigma_n(M-z_\alpha I)\le s^2/t\cap\sigma_{n-1}(M - z_\alpha I)\le 2t\big]\\
&\le\sum_j\sum_{\alpha\in A_j}\sum_{\ell=0}^{\lfloor\log_2(2(1+\delta K')/s)\rfloor}\mb{P}[\sigma_n(M - z_\alpha I)\le 2^{-\ell}s\cap\sigma_{n-1}(M - z_\alpha I)\le 2^{\ell+2}s],
\end{align*}
where the last line represents a dyadic chop on the possible values of $t$, restricting to $t = 2^\ell s$ over the stated range of $\ell$. Finally, we apply \cref{lem:complex-sv-2} and \cref{lem:unif-bound-2}, with the latter used only in $\mc{R}_0$ (i.e., for $j = 0$).\\

For $j\neq 0$ and $\alpha\in A_j$, we have $|\on{Im}z_\alpha|\ge |j|H$. Hence, by \cref{lem:complex-sv-2},
\[\mb{P}[\sigma_n(M-z_\alpha I)\le 2^{-\ell}s\cap\sigma_{n-1}(M-z_\alpha I)\le 2^{\ell+2}s]\le c_{\ref{lem:complex-sv-2}}(1+\delta K')^2\max(K^2,K^4)n^6\frac{16s^4}{\delta^4j^2H^2}.\]

For $\alpha\in A_0$ we have by \cref{lem:unif-bound-2} that
\[\mb{P}[\sigma_n(M-z_\alpha I)\le 2^{-\ell}s\cap\sigma_{n-1}(M-z_\alpha I)\le c_{\ref{lem:unif-bound-2}}K^2n^4\frac{4s^2}{\delta^2}.\]

Putting this together, we obtain
\begin{align*}
\mb{P}[\mc{E}_{K'}\cap\eta(M)\le s]&\le\sum_j\sum_{\alpha\in A_j}\sum_{\ell=0}^{\lfloor\log_2(2(1+\delta K')/s)\rfloor}\mb{P}[\sigma_n(M)\le 2^{-\ell}s\cap\sigma_{n-1}(M)\le 2^{\ell+2}s]\\
&\lesssim\log_2(2(1+\delta K')/s)\bigg(
H(1+\delta K')s^{-2}\cdot\left(K^2n^4\frac{s^2}{\delta^2}\right)\\
&\qquad+\sum_{j\neq 0}\left[H(1+\delta K')s^{-2}\cdot\left((1+\delta K')^2\max(K^2,K^4)n^6\frac{s^4}{\delta^4j^2H^2}\right)\right]\bigg)\\
&\lesssim\log_2(2(1+\delta K')/s)\left((1+\delta K')^3\max(K^2,K^4)n^6\frac{s^2}{\delta^4H}+(1+\delta K')K^2n^4\frac{H}{\delta^2}\right),
\end{align*}
where $\lesssim$ indicates a suppressed absolute constant. 

Finally, we choose
\[H = (1+\delta K')\max(1,K)\frac{sn}{\delta}\ge s\]
to find that
\[\mb{P}[\mc{E}_{K'}\cap\eta(M)\le s]\lesssim\log_2(2(1+\delta K')/s)\cdot (1+\delta K')^2\max(K^2,K^3)n^5\frac{s}{\delta^3}.\qedhere\]
\end{proof}

\bibliographystyle{amsplain0.bst}
\bibliography{main.bib}

\appendix
\section{Proof of \texorpdfstring{\cref{lem:double-reduction}}{Lemma 6.2}}\label{app:two-singular-values}

%The goal of this section is to establish \cref{lem:double-reduction}. 
We begin with a preliminary lemma.
\begin{lemma}\label{lem:second-smallest-singular-value}
Let $M$ be any $n\times n$ random matrix and fix any $\epsilon\ge 0$. Fix an index $\ell \in [n]$. Let $M_i, H_i$ be as in \cref{lem:reduction} and let $H_{i,j} = \on{span}\{M_k: k\neq i,j\}$. Then for any event $\mc{E}$,
\[\mb{P}[\sigma_{n-1}(M)\le\epsilon\cap \mc{E}]\le \sum_{j\in[n]\setminus\ell}\mb{P}[\on{dist}(M_j,H_{j,\ell})\le \epsilon n^{1/2}\cap \mc{E}].\]
\end{lemma}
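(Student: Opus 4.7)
The plan is to mimic the proof of \cref{lem:reduction}, but with the Courant--Fischer characterization of $\sigma_{n-1}$ (rather than $\sigma_n$), and with the specific codimension-one subspace $\{x : x_\ell = 0\}$ as the test subspace. This is a purely deterministic reduction (the event $\mc{E}$ just tags along through the set inclusion), and the only conceptual choice is which hyperplane to test against; this choice is forced by the target conclusion, which references the subspaces $H_{j,\ell}$ that omit the $\ell$-th column. I do not expect a real obstacle.

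Concretely, I would first invoke the min-max identity
\[\sigma_{n-1}(M) = \max_{W : \dim W = n-1}\ \min_{x \in W,\ \snorm{x}_2 = 1} \snorm{Mx}_2.\]
Specializing $W$ to the coordinate hyperplane $\{x : x_\ell = 0\}$, the event $\{\sigma_{n-1}(M) \le \epsilon\}$ forces the existence of a unit vector $x$ with $x_\ell = 0$ and $\snorm{Mx}_2 \le \epsilon$. Since $x$ is supported on at most $n-1$ coordinates and has unit norm, pigeonhole produces some $j \in [n] \setminus \{\ell\}$ with $|x_j| \ge 1/\sqrt{n-1} \ge 1/\sqrt{n}$.

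Next, writing $Mx = x_j M_j + \sum_{k \neq j,\ell} x_k M_k$ and observing that the second sum lies in $H_{j,\ell}$, I would estimate
\[\epsilon \ge \snorm{Mx}_2 \ge \on{dist}(x_j M_j, H_{j,\ell}) = |x_j|\, \on{dist}(M_j, H_{j,\ell}) \ge \on{dist}(M_j, H_{j,\ell})/\sqrt{n},\]
so that $\on{dist}(M_j, H_{j,\ell}) \le \epsilon\sqrt{n}$. This yields the deterministic inclusion
\[\{\sigma_{n-1}(M) \le \epsilon\} \cap \mc{E} \;\subseteq\; \bigcup_{j \in [n] \setminus \{\ell\}} \bigl(\{\on{dist}(M_j, H_{j,\ell}) \le \epsilon\sqrt{n}\} \cap \mc{E}\bigr),\]
and the desired inequality then follows from the union bound.
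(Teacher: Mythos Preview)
Your proof is correct and essentially the same as the paper's. The only cosmetic difference is in how the unit vector with $x_\ell = 0$ and $\snorm{Mx}_2 \le \epsilon$ is produced: you invoke the max--min form of Courant--Fischer directly on the coordinate hyperplane $\{x : x_\ell = 0\}$, whereas the paper uses the dual min--max form to obtain a $2$-dimensional subspace $W$ with $\snorm{M|_W}\le\epsilon$ and then intersects $W$ with that same coordinate hyperplane; from that point on the arguments coincide.
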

\begin{proof}
If $\sigma_{n-1}(M)\le\epsilon$, then there is a subspace $W$ of dimension $2$ satisfying $\snorm{M|_W}\le\epsilon$. Since $W$ has dimension $2$, there must exist some vector $w\in W$ with $w_\ell = 0$ and $\snorm{w}_2=1$. In particular, there exists some $j\neq \ell$ such that $|w_j|\ge n^{-1/2}$. %which clearly exists and will satisfy $j\neq\ell$. 
Then
\[\epsilon\ge\snorm{Mw}_2\ge\on{dist}(Mw,H_{j,\ell}) = \on{dist}(M_jw_j+M_\ell w_\ell,H_{j,\ell}) = \on{dist}(M_jw_j,H_{j,\ell})\ge n^{-1/2}\on{dist}(M_j,H_{j,\ell}).\]
%Thus $\on{dist}(M_j,H_{j,\ell})\le\varepsilon n^{1/2}$.

We have shown that if the event $\sigma_{n-1}(M)\le\epsilon \cap \mc{E}$ occurs, then at least one of the events $\on{dist}(M_j,H_{j,\ell})\le\epsilon n^{1/2} \cap \mc{E}$ occurs for $j\neq\ell$. This clearly implies the desired result.
\end{proof}
Now we are ready to deduce \cref{lem:double-reduction}.
\begin{proof}[Proof of \cref{lem:double-reduction}]
We have
\begin{align*}
\mb{P}[\sigma_n(M)\le\epsilon_1\cap&\sigma_{n-1}(M)\le\epsilon_2\cap \mc{E}]\\
&\le\sum_{i=1}^n\mb{P}[\on{dist}(M_i,H_i)\le\epsilon_1n^{1/2}\cap\sigma_{n-1}(M)\le\epsilon_2\cap \mc{E}]\\
&\le\sum_{i=1}^n\sum_{j\neq i}\mb{P}[\on{dist}(M_i,H_i)\le\epsilon_1n^{1/2}\cap\on{dist}(M_j,H_{i,j})\le\epsilon_2n^{1/2}\cap\mc{E}];
%&\le \sum_{i=1}^{n}\sum_{j\neq i}\mb{P}[\on{dist}(M_i,H_i)\le \varepsilon_1 n^{1/2}\cap \on{dist}(M_j,H_{i,j})\le \varepsilon_2 n^{1/2}\cap \mc{E}];
\end{align*}
the first inequality follows by applying \cref{lem:reduction} with $\epsilon = \epsilon_1$ and $\mc{E}$ replaced by $\sigma_{n-1}(M) \leq \epsilon_2\cap\mc{E}$, and the second inequality follows by 
applying \cref{lem:second-smallest-singular-value} with $\epsilon = \epsilon_{2}$, $\ell = i$, and $\mc{E}$ replaced by $\on{dist}(M_i,H_i)\le\epsilon_1n^{1/2}\cap\mc{E}$.
\end{proof}

\section{Eigenvalue Spacing for Complex Random Variables}\label{app:complex-spacing}
In this appendix, we sketch a series of estimates regarding the eigenvalue spacing for complex perturbations. We first state the analog of \cref{lem:complex-sv-2} in the complex case. Throughout, $G_{n} = G_{n}(\xi)$, where $\xi$ is any sub-Gaussian complex random variable with bounded density, as in \cref{sec:general-complex}.
\begin{lemma}\label{lem:complex-sv-3}
Let $A\in\mb{C}^{n\times n}$ and let $M = A + \delta G_n$. Then, \[\mb{P}[\sigma_n(M) \le \epsilon_1 \cap \sigma_{n-1}(M)\le \epsilon_2]\le c_{\ref{lem:complex-sv-3}}K^2n^6\frac{\epsilon_1^2\epsilon_2^2}{\delta^4}\]
where $c_{\ref{lem:complex-sv-3}}$ is an absolute constant.
\end{lemma}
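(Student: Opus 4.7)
The plan is to mirror the proof of \cref{lem:complex-sv-2}, but to use the stronger small-ball estimate for complex bounded-density random variables (as in the proof of \cref{lem:general-small-ball}) in place of the real bounded-density estimate coming from \cref{lem:low-density}. Since in the complex case the random variable $\sang{M_j, v}$ lives in $\mb{R}^2$ and has a bounded density by \cref{lem:density-bound}, each application of an invertibility-via-distance argument directly yields an $\varepsilon^2$ rate, with no need to split on $|\on{Im}z|$ or to pass to an auxiliary real/imaginary decomposition. Consequently the entire argument will collapse into a single clean two-step conditioning, with no strip decomposition or dyadic chop required.

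First I would invoke \cref{lem:double-reduction} with the trivial event $\mc{E}$, reducing the joint singular value bound to
\[\mb{P}[\sigma_n(M)\le\epsilon_1\cap\sigma_{n-1}(M)\le\epsilon_2] \le \sum_{i=1}^n\sum_{j\ne i}\mb{P}[\on{dist}(M_i,H_i)\le\epsilon_1 n^{1/2}\cap\on{dist}(M_j,H_{i,j})\le\epsilon_2 n^{1/2}].\]
For each ordered pair $(i,j)$ I would condition first on $M_{-i}$, which determines $H_i$ and hence a fixed unit normal $v_i$. Since $\on{dist}(M_i,H_i)\ge|\sang{M_i,v_i}|$ and $\delta(G_n)_i$ is a vector of independent complex bounded-density entries (with density bounded by $Kn\delta^{-2}$ each), \cref{lem:density-bound} applied to the $\mb{R}^2$-valued linear combination $\sang{M_i,v_i}$ yields
\[\mb{P}[\on{dist}(M_i,H_i)\le\epsilon_1 n^{1/2}\mid M_{-i}]\lesssim Kn^2\frac{\epsilon_1^2}{\delta^2}.\]

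Now because the previous bound is uniform in $M_{-i}$ (in particular uniform in $M_j$), I can integrate out $M_i$ and then condition on $M_{-\{i,j\}}$. Choosing any unit normal $v_{i,j}$ to $H_{i,j}$, the distance $\on{dist}(M_j,H_{i,j})\ge|\sang{M_j,v_{i,j}}|$ has the same structure as before, so \cref{lem:density-bound} again gives
\[\mb{P}[\on{dist}(M_j,H_{i,j})\le\epsilon_2 n^{1/2}\mid M_{-\{i,j\}}]\lesssim Kn^2\frac{\epsilon_2^2}{\delta^2}.\]
Multiplying the two estimates and summing over the at most $n^2$ pairs $(i,j)$ yields $c_{\ref{lem:complex-sv-3}} K^2 n^6\epsilon_1^2\epsilon_2^2/\delta^4$, as desired.

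There is no serious obstacle here: the entire point is that, for complex perturbations, the two-dimensionality of $\mb{C}\cong\mb{R}^2$ is already built into the density bound, so the delicate real-case maneuvering of \cref{lem:two-dimensionality} (separating $\on{Im}z$, bounding $\snorm{y_j}_2$ away from zero via the identity $My=\rho x$, and using a determinant lower bound) is replaced by one line invoking \cref{lem:density-bound} with $\ell = 2$. The main bookkeeping point to verify is simply that conditioning on $M_{-i}$ in the first step, and then on $M_{-\{i,j\}}$ in the second, leaves the column whose inner product is being small-balled independent of the relevant normal vector, exactly as in the proof of \cref{lem:complex-sv-2}.
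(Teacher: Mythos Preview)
Your proposal is correct and takes essentially the same approach as the paper's sketch: apply \cref{lem:double-reduction}, then replace the real small-ball input \cref{lem:two-dimensionality} by the complex per-column estimate from the proof of \cref{lem:general-small-ball} (i.e., \cref{lem:density-bound}), and multiply the two resulting $\varepsilon^2$ bounds. Your observation that in the complex case one need not pass to the $(n-1)\times(n-1)$ minor (the unit normal $v_{i,j}$ can be taken arbitrarily since \cref{lem:density-bound} only requires $\snorm{v_{i,j}}_2=1$) is correct, and the conditioning structure you describe is exactly the one in the proof of \cref{lem:complex-sv-2}.
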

\begin{proof}[Sketch]
This essentially identical to the proof of \cref{lem:complex-sv-2} with \cref{lem:two-dimensionality} replaced by the estimate obtained in \cref{lem:general-small-ball}.
\end{proof}
\begin{theorem}\label{thm:spacing-complex-general}
Let $A\in\mb{C}^{n\times n}$ with $\snorm{A}\le 1$, let $\delta \in (0,1)$, and let $M = A + \delta G_n$. Let $\mc{E}_{K'}$ denote the event that $\snorm{G_n}\le K'$. Then, for $s\le 1$, 
\[\mb{P}[\mc{E}_{K'}\cap\eta(M)\le s]\le c_{\ref{thm:spacing-complex-general}}\log_2(2(1+\delta K')/s)\cdot (1+\delta K')^2K^4n^6\frac{s^2}{\delta^4}\]
where $c_{\ref{thm:spacing-complex-general}} > 0$ is an absolute constant.
\end{theorem}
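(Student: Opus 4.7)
The plan is to mirror the proof of \cref{prop:spacing} but exploit the major simplification available in the complex setting: the small-ball estimate \cref{lem:complex-sv-3} has no $|\on{Im}z|^{-1}$ singularity, so there is no need to split the disk into horizontal strips. On the event $\mc{E}_{K'}$, every eigenvalue of $M$ lies in $\mc{D}(0,1+\delta K')$, so if $\eta(M)\le s$ then there is a pair of eigenvalues $\lambda_i\neq\lambda_j$ with $|\lambda_i-\lambda_j|\le s$ that are jointly contained in a small region of this disk.

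I would cover $\mc{D}(0,1+\delta K')$ by $N=O((1+\delta K')^2 s^{-2})$ disks $\mc{D}(z_\alpha,s)$ of radius $s$. If $|\lambda_i-\lambda_j|\le s$ and $\lambda_i\in\mc{D}(z_\alpha,s)$ for some $\alpha$, then both eigenvalues sit in $\mc{D}(z_\alpha,2s)$. Applying \cref{lem:orthog} to $N=M$ with the disk $\mc{D}(z_\alpha,2s)$ yields some $t\in[2s,\,2\snorm{M}]\subseteq[2s,\,2(1+\delta K')]$ (the last inclusion using $\mc{E}_{K'}$ and $\snorm{A}\le 1$) such that
\[\sigma_n(M-z_\alpha I)\le 4s^2/t,\qquad \sigma_{n-1}(M-z_\alpha I)\le 2t.\]
A dyadic chop $t\in[2^\ell s,2^{\ell+1} s]$ over the range $0\le\ell\le\lceil\log_2(2(1+\delta K')/s)\rceil$ then reduces the problem to controlling
\[\mb{P}\bigl[\sigma_n(M-z_\alpha I)\le 2^{2-\ell} s\ \cap\ \sigma_{n-1}(M-z_\alpha I)\le 2^{\ell+2} s\bigr]\]
for each $(\alpha,\ell)$.

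I would then apply \cref{lem:complex-sv-3} at each dyadic level, with $\epsilon_1=2^{2-\ell}s$ and $\epsilon_2=2^{\ell+2}s$. The crucial feature is that the $\epsilon_1^2\epsilon_2^2$ product telescopes: $(2^{2-\ell}s)^2(2^{\ell+2}s)^2=2^{8}s^4$, so the resulting bound $O(K^2 n^6 s^4/\delta^4)$ is \emph{independent} of $\ell$. This is the payoff that makes the complex case so much cleaner than the real one. Summing over $N$ disks and $O(\log_2(2(1+\delta K')/s))$ dyadic levels gives
\[\mb{P}[\mc{E}_{K'}\cap\eta(M)\le s]\lesssim\log_2\!\Bigl(\tfrac{2(1+\delta K')}{s}\Bigr)\cdot\tfrac{(1+\delta K')^2}{s^2}\cdot K^2 n^6\cdot\tfrac{s^4}{\delta^4},\]
which is exactly the asserted estimate (up to the $K^4$ in the theorem statement, which is a safe overestimate of the actual $K^2$ constant produced by this calculation).

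There is essentially no genuine obstacle here given the tools already proved: \cref{lem:orthog} provides the reduction from an eigenvalue-gap event to a twin singular-value event, and \cref{lem:complex-sv-3} provides the $\epsilon_1^2\epsilon_2^2/\delta^4$ bound uniformly in the shift $z_\alpha$. The mildly delicate bookkeeping is simply ensuring the range of $t$ in the dyadic chop is controlled on $\mc{E}_{K'}$ (so we stay in the regime covered by \cref{lem:orthog}), and ensuring the number of covering disks is counted correctly.
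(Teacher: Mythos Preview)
Your proposal is correct and matches the paper's own (sketched) proof: take an $s$-net of $\mc{D}(0,1+\delta K')$, follow the argument of \cref{prop:spacing}, and replace \cref{lem:complex-sv-2} by \cref{lem:complex-sv-3}. You have also correctly identified the simplification that the absence of the $|\on{Im}z|^{-1}$ factor in \cref{lem:complex-sv-3} renders the horizontal strip decomposition unnecessary, and your observation that the argument actually yields $K^2$ rather than the stated $K^4$ is accurate.
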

\begin{proof}[Sketch]
This follows by taking an $s$-net of $\mc{D}(0,1+\delta K')$, and then following the proof of \cref{prop:spacing}, using \cref{lem:complex-sv-3} instead of \cref{lem:complex-sv-2}.
\end{proof}
Using the above theorem, one can obtain a high-probability spacing between the eigenvalues of $A + \delta G_{n}(\xi)$ on the order of $\delta^2n^{-3}/\log(n\delta^{-1})$; for $\delta$ sufficiently small, this is sharper than the spacing of order $(\delta/n)^{8/3}$ obtained for $A + \delta G_{n}(\mc{N}_{\mb{C}}(0,1))$ by Banks, Vargas,  Kulkarni, and Srivastava \cite{BVKS19}. It is quite likely that in the complex Gaussian case, one can obtain a version of \cref{lem:complex-sv-3} which has a better dependence on $n$, and thereby obtain an eigenvalue spacing estimate of the order of $(\delta/n)^{2}/\log(n\delta^{-1})$, which represents a strict improvement over the estimate in \cite{BVKS19}. 

\section{Details for \texorpdfstring{\cref{thm:complex}}{Proposition 3.1}}\label{app:details}
Here, we show how to use \cref{lem:operator-tail,lem:complex-final} to complete the proof of \cref{thm:complex}.
\begin{proof}[Proof of \cref{thm:complex}]
Let $M = A + \delta G_n$, with eigenvalues $\lambda_1,\ldots,\lambda_n$. By \cref{lem:operator-tail}, 
\[\mb{P}[\sigma_{n}(G_n)\le K']\ge\frac{3}{4}\]
where $K'$ depends only on the sub-Gaussian moment of $\xi$. Let $\mc{E}_{K'}$ denote the event that $\sigma_{n}(G_n)\le K'$. Then, $\mb{P}[\mc{E}_{K'}]\geq 3/4.$

Furthermore, using \cref{lem:complex-final} for $\mc{B} = \mc{D}(0,1+\delta K')$, we have that 
\[\mb{E}[\sum_{\lambda_i\in \mc{B}}\kappa(\lambda_i)^2]\le \frac{\pi eKn^3(1+\delta K')^2}{\delta^2}.\]
Recall that $K$ here is an upper bound on the density $\xi$ which we have assumed. 

Under the event $\mc{E}_{K'}$ we also have that 
\[\sum_{\lambda_i\in \mc{B}}\kappa(\lambda_i)^2 = \sum_{i=1}^{n}\kappa(\lambda_i)^2 = \kappa_2^2\]
and therefore, through simple conditioning, we obtain that 
\[\mb{E}[\kappa_2^2|\mc{E}_{K'}]\le \frac{4\pi eKn^3(1+\delta K')^2}{3\delta^2}.\]
Hence, using Markov's inequality, we have that 
\[\mb{P}\left[\kappa_2^2\le\frac{4\pi eKn^3(1+\delta K')^2}{\delta^2}\bigg|\mc{E}_{K'}\right]\ge\frac{2}{3},\]
which implies that
\[\mb{P}\left[\kappa_2^2\le\frac{4\pi eKn^3(1+\delta K')^2}{\delta^2}\cap \mc{E}_{K'}\right]\ge\frac{1}{2}.\]
The result now follows immediately from \cref{lem:kappaV-upper}.
\end{proof}

\end{document}